\documentclass[12pt]{article}

\usepackage[english]{babel}
\usepackage{amsmath,amsthm}
\usepackage{amsfonts}
\usepackage{graphicx}
\usepackage{epsfig}
\usepackage{epstopdf}
\usepackage{url}
\usepackage{amssymb}
\usepackage{bbm}
\usepackage{mathrsfs}
\usepackage{algorithm}
\usepackage{algorithmic}

\usepackage{color}
\usepackage[makeroom]{cancel}
\usepackage{tikz-cd}
\usepackage[titletoc]{appendix}
\usepackage[%
breaklinks%
,colorlinks%
,linkcolor=red% internal document links
,anchorcolor=blue%
,pagecolor=blue%
,citecolor=blue%
,bookmarks=false%
]{hyperref}
\definecolor{codegreen}{rgb}{0,0.6,0}
\definecolor{codegray}{rgb}{0.5,0.5,0.5}
\definecolor{codepurple}{rgb}{0.58,0,0.82}
\definecolor{backcolour}{rgb}{0.95,0.95,0.92}
\usepackage{listings}
\lstdefinelanguage{Sage}[]{Python}
{morekeywords={False,sage,True},sensitive=true}
\lstset{
  backgroundcolor=\color{backcolour},
  frame = lines,
  showtabs=False,
  showspaces=False,
  showstringspaces=False,
  commentstyle={\ttfamily\color{dgreencolor}},
  keywordstyle={\ttfamily\color{dbluecolor}\bfseries},
  stringstyle={\ttfamily\color{dgraycolor}\bfseries},
  language=Sage,
  basicstyle={\fontsize{9pt}{9pt}\ttfamily},
  aboveskip=0.3em,
  belowskip=0.1em,
  numberstyle=\scriptsize
}
\definecolor{dblackcolor}{rgb}{0.0,0.0,0.0}
\definecolor{dbluecolor}{rgb}{0.01,0.02,0.7}
\definecolor{dgreencolor}{rgb}{0.2,0.4,0.0}
\definecolor{dgraycolor}{rgb}{0.30,0.3,0.30}

\definecolor{delim}{RGB}{20,105,176}
\definecolor{numb}{RGB}{106, 109, 32}
\definecolor{string}{rgb}{0.64,0.08,0.08}
\date{}

\newtheorem{theorem}{Theorem}[section]
\newtheorem{corollary}[theorem]{Corollary}
\newtheorem{lemma}[theorem]{Lemma}
\newtheorem{proposition}[theorem]{Proposition}

\newtheorem{remark}[theorem]{Remark}

\newtheorem{example}[theorem]{Example}
\newtheorem*{theorem*}{Theorem}

\def\P{{\mathbb{P}}}
\def\T{{\mathbb{T}}}

\def\S{{\mathcal{S}}}
\def\N{{\mathbb{N}}}

\def\para{\vspace{1.5mm}}

\def\cZ{\mathcal{Z}}

\def\K{\mathbb{K}}

\def\Gr{\mathbb{G}\mathrm{r}}
\def\bbV{\mathbb{V}}
\def\O{\mathcal{O}}
\def\Sd{S^dV}
\def\S{S^3V}
\def\SS{S^2V}
\def\Ss{S^4V}
\def\sub{\mathrm{Sub}}
\def\grad{\P(\langle\nabla F\rangle)}
\def\gradg{\P(\langle\nabla G\rangle)}
\def\pgl{\mathrm{PGL}(2)}
\def\pgn{\mathrm{PGL}(n+1)}
\def\t{\mathrm{t}}
\def\hil{\mathrm{Hilb}}
\def\hessMat{\mathrm{H}}

\oddsidemargin -1mm \evensidemargin -1mm \topmargin 0mm \headheight
10pt 
\footskip 30pt 
\headsep 0.5cm 
\textheight 220mm 
\textwidth160mm 
\columnsep 10pt \columnseprule 0pt \topsep 1pt plus 2pt minus
4pt
\itemsep 1pt plus 2pt minus 1pt
\marginparwidth 0pt \oddsidemargin .5cm \evensidemargin .5cm

\marginparsep 0pt \topmargin -.5cm \sloppy

\title{The Hessian correspondence of hypersurfaces\\ of degree $3$ and $4$}

\author{Javier Sendra-Arranz}
 
\begin{document}
\maketitle

\para

\begin{abstract}
\noindent 
 Let $X$  be a hypersurface, of degree $d$, in an $n$--dimensional projective space. The Hessian map is a rational map from $X$ to the projective space of symmetric matrices that sends a point $p\in X$ to the Hessian matrix of  the defining polynomial of $X$ evaluated at $p$. The Hessian correspondence is the map that sends a hypersurface to its Hessian variety; i.e. the Zariski closure of its image via the Hessian map. In this paper, we study this correspondence for hypersurfaces with Waring rank at most $n+1$ and for hypersurfaces of degree $3$ and $4$. We prove that, for hypersurfaces with Waring rank $k\leq n+1$, the map is birational onto its image for $d$ even, and it is generically finite of degree $2^{k-1}$ for $d$ odd.
 We prove that, for degree $3$ and $n=1$, the map is two to one, and that, for degree $3$ and $n\geq 2$, and for degree $4$, the Hessian correspondence is birational. In this study, we introduce the $k$–gradients varieties and analyze their main properties. We provide effective algorithms for recovering a hypersurface from its Hessian variety, for degree $3$ and $n\geq 1$, and for degree $4$ and $n$ even.
\end{abstract}

\section{Introduction}

We start motivating the notions of Hessian map and Hessian correspondence.
For this purpose, we first introduce the concept of Gaussian map and polar map.
 In order to be more precise, let $\mathbb{K}$ be an algebraically closed field of characteristic $0$,  let $V:=\K^{n+1}$, for $n\geq 1$, with coordinates $x_0,\ldots,x_n$,  and let $S^dV$  denote the $d$--th symmetric power of $V$. Given a degree $d$ homogeneous polynomial $F\in \P(\Sd)$, the \textsf{Gaussian map} is the rational map 
\begin{equation}\label{eq-Gmap} \begin{array}{cccc}
     g^1: &\bbV(F)&\dashrightarrow&\left(\P^n\right)^{*} \\ \noalign{\vspace*{1mm}}
  &   p &\longmapsto & \nabla F(p).
\end{array}
\end{equation} In other words, the Gaussian map associates to each smooth point of $\bbV(F)$ its tangent hyperplane. 
The \textsf{dual variety} $\bbV(F)^{*}$ of $\bbV(F)$ is the closure of its image via the Gaussian map, namely, the closure of $g^1(\bbV(F))$. For generic $F$, the dual variety  $\bbV(F)^{*}$ is a hypersurface of degree $d(d-1)^{n-1}$ (see \cite{D}). Its defining polynomial $\Delta_F$  is called the \textsf{$F$--discriminant}. In this situation,  the rational map 
\[
\begin{array}{cccc}
I_{d,n}:&\P(S^dV)&\dashrightarrow& \P(S^{d(d-1)^{n-1}}V)\\ \noalign{\vspace*{1mm}} & F&\longmapsto&\Delta_F
\end{array}
\]
is considered. 
By the Biduality Theorem (see \cite{Disc} Theorem 1.1.), the map $I_{d,n}$ is birational onto its image and $F$ can be  recovered from $I_{d,n}(F)$  by taking the dual variety of $I_{d,n}(F)$. 

\para 

The previous construction is based on the Jacobian of $F$. The generalization of the Gaussian map to higher order derivatives is called the degree $k$ polar map. For $k\geq 1$, the \textsf{degree $k$ polar map} is the rational map 
 \begin{equation}\label{eq-kPolar} \begin{array}{cccl}
 g^k: & \bbV(F) &\dashrightarrow & \P(S^k V) \\ \noalign{\vspace*{1mm}}
        & p & \longmapsto & \sum \dfrac{\partial^kF}{\partial x_{i_1}\cdots\partial x_{i_k}}(p)x_{i_1}\cdots x_{i_k}.
\end{array}
\end{equation}
Note that,  $\P(S^1 V)$ can be identified with $\left(\P^n\right)^{*} $, and thus \eqref{eq-kPolar}, with $k=1$, yields \eqref{eq-Gmap}. 
The \textsf{$k$--th polar variety} is defined as the closure of the image of  $g^k$. In \cite{L}, the author proved that if $g^k$ is regular, the $k$--th polar variety has dimension $n-1$ and degree $d(d-k)^{n-1}$.

\para

The Hessian map is introduced using the second order derivatives. Let $\hessMat_F$ denote the Hessian matrix of $F$. Then the \textsf{Hessian map} is defined as the rational map
 \begin{equation}\label{eq-Hess-map} \begin{array}{cccc}
h_F:&\bbV(F)&\dashrightarrow&\P(S^2V) \\ \noalign{\vspace*{1mm}}
 &p & \longmapsto & \hessMat_{F}(p).
 \end{array}
 \end{equation}
Clearly the Hessian map generalizes the Gaussian map and, taking into account that  $\P(S^2 V)$ can be identified with the set of the symmetric matrices, the  {Hessian map} can be seen as the degree $2$ polar map.  In this case, the $2$-nd polar variety is called the \textsf{Hessian variety}. Alternatively, in \cite{H1}, \cite{H2} and \cite{ID}, the {Hessian map} is defined as the map that associates to $F$ the  \textsf{Hessian polynomial},  namely, the determinant of the Hessian matrix. It should be noticed that our notion of Hessian map differs  from the concept introduced in these two papers.

\para

 Let $z_{i,j}$ be the coordinate of $\P(S^2V)$ representing the monomial $x_ix_j$. By abuse of notation, we will often consider $h_F$, in \eqref{eq-Hess-map}, as a rational map from $\P^n$ instead of from $\bbV(F)$.
Let $\Xi_{d,n}$ be the closure of  the set
$
\{(x,F)\in\P(S^2V)\times\P(S^dV):x\in \overline{h_F(\bbV(F))}\}.
$
The projection $\Xi_{d,n}\dashrightarrow\P(S^dV)$ defines a family of algebraic varieties parametrized by an open subset of $\P(S^dV)$. Let $p_{d,n}(t)$ be the Hilbert polynomial of a generic fiber of this projection. Then, we define the \textsf{Hessian correspondence} as the scheme-theoretical morphism 
\begin{equation}\label{eq:Hess corr}
 \begin{array}{cccc}
     H_{d,n}:&\P(S^dV)&\dashrightarrow&\hil^{p_{d,n}(t)}(\P(S^2V))\\
     \noalign{\vspace*{1mm}}
      & F&\longmapsto&\overline{h_F(\bbV(F))}.
     \end{array}
 \end{equation}
 In other words, $H_{d,n}$ is the rational map that sends a  hypersurface to its Hessian variety.
Therefore, $h_F$ and $H_{d,n}$ can be seen as the generalization, to the second order of derivation, of the Gaussian map and $I_{d,n}$, respectively.

\para 

The base locus of the Hessian correspondence consists of the polynomials $F$ whose Hessian variety does not have Hilbert polynomial $p_{d,n}(t)$. Analogously, we could define the Hessian correspondence for varieties of polynomials contained in this base locus. 
The main example we are interested in is the case of the variety of polynomials with border rank $k$. The \textsf{Waring rank} of a polynomial $F\in S^dV$ is the minimum integer $k\geq 0$ such that $F = l_1^d+\cdots+l_k^d$ for some linear forms $l_1,\ldots,l_k$.  We denote the Veronese variety of all polynomials of the form $l^d$, for $l\in\P(S^1V)$, by $V^{d,n}$. The closure of the set of degree $d$ polynomials of Waring rank $k$ is the $k$--th secant variety of $V^{d,n}$, denoted by $\sigma_k(V^{d,n})$.  The \textsf{border rank} of a polynomial $F\in\P(S^dV)$ is the minimum integer $k\geq 0$ such $F\in\sigma_k(V^{d,n})$. 
For further literature on Waring rank and secant varieties we refer to \cite{sym}. We define the restriction of the Hessian correspondence to $\sigma_k(V^{d,n})$ as the rational map 
\begin{equation}\label{eq:Hess corr rank}
 \begin{array}{cccc}
     H_{d,n,k}:&\sigma_k(V^{d,n})&\dashrightarrow&\hil^{p_{d,n,k}(t)}(\P(S^2V))\\
     \noalign{\vspace*{1mm}}
      & F&\longmapsto&\overline{h_F(\bbV(F))}.
     \end{array}
 \end{equation}
where $p_{d,n,k}(t)$ is the Hilbert polynomial of the Hessian variety of a generic hypersurface in $\sigma_k(V^{d,n})$. Note that, in general, it is not true that $p_{d,n}= p_{d,n,k}$. However, for any $d$ and $n$ there exists a $k$ such that $\sigma_k(V^{d,n})=\mathbb{P}(S^dV)$, and $H_{d,n}=H_{d,n,k}$.

\para
 
The natural question, and central aim of this paper, is the extension of the  results concerning the maps $g^1$ and $I_{d,n}$ to $h_F$, $H_{d,n}$ and $H_{d,n,k}$. More concretely, given a generic variety $X$ in the image of $H_{d,n}$ ($H_{d,n,k}$, respectively), our goal is to recover, when possible, the polynomials whose Hessian variety is $X$; that is, to determine $F\in \P(S^dV)$ such that $H_{d,n}(F)=X$. In order words, we study the description of the  fiber $H_{d,n}^{-1}(X)$.
Some of the questions that we aim to answer concerning the fibers of $H_{d,n}$ (and $H_{d,n,k}$ respectively) are: Is $H_{d,n}$ birational onto its image? Is $H_{d,n}$ finite? Is the fiber a unique polynomial up to change of coordinates in $\P^n$? Or up to isomorphisms or birationality of the corresponding hypersurfaces?

\para 

We observe that the cases $d=1$ and $d=2$ have a direct answer. For $d=1$, the problem is trivial since the Hessian is the zero matrix. Let now $d=2$. For $F\in\P(S^2V)$, the Hessian matrix $H_F$ is a symmetric matrix with constant entries. Therefore,  $H_{2,n}$  is the map $H_{2,n}:\P(S^2V)\rightarrow\P(S^2V)$ that sends $F$ to $h_F$. Applying the Euler formula twice, we get that $2F = (x_0,\ldots,x_n)h_F(x_0,\ldots,x_n)^{\mathrm{t}}$.
In particular, we get that $H_{2,n}$ is the identity map.  Therefore, the interesting questions arises for $d\geq3$.

\para 

In this paper we analyze the above questions for polynomials with Waring rank at most $n+1$ for polynomials of degree $3$ and $4$. To study $H_{d,n,k}$ for $k\leq n+1$, we analyze the relation between the $\mathrm{PGL}(n+1)$ action on $\P^n$ and the Hessian map. 
Whereas, the main idea for approaching the cases $d=3$ and $d=4$ is to study $h_F(\P^n)$ instead of $H_{d,n}(F)$. We prove that for $d=3$, $h_F(\P^n)$ is the smallest linear subspace containing $H_{3,n}(F)$. For $d=4$ and $n\geq 2$, in Proposition \ref{prop:uniq vero} we prove that $h_F(\P^n)$ is the unique Veronese variety containing $H_{4,n}(F)$. Let us, at this point, highlight the connection between the variety $h_F(\P^n)$ and the classical  \textsf{Hesse problem}. The \textsf{$(d,n)$--Gordan-Noether locus} is the locus of polynomials in $\P(S^dV)$ whose Hessian polynomial vanishes. The Hesse problem deals with the description of the $(d,n)$--Gordan-Noether locus. For $n\leq 3$, a polynomial lies in this locus if and only if it is a cone (see \cite{GN}). We refer to \cite{CRS} for further information on the Gordan-Noether locus, and to \cite{BFP} for some recent contributions. In our setting, a polynomial $F$ lies in the $(d,n)$--Gordan-Noether if and only if $h_F(\P^n)$ is contained in the hypersurfaces of symmetric matrices with vanishing determinant. We expect that our study of $h_F(\P^n)$ could enlight new ideas for approaching the Hesse problem.
 
\para

This paper fully answers the above questions for polynomials with Waring rank at most $n+1$ and for polynomials of degree $3$ and $4$. The main contributions of this paper are: 

\para 

   \begin{theorem*}[Theorem \ref{theo:sym hessian}]
       For $d\geq 3$ and $k\leq n+1$, $H_{d,n,k}$ is generically finite. For $d$ odd, it has degree $2^{k-1}$. For $d$ even, $H_{d,n,k}$ is birational.
   \end{theorem*}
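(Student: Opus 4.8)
The plan is to combine the $\pgn$--equivariance of the Hessian map with an explicit description of the Hessian variety of a generic polynomial of Waring rank $k$. Since $k\le n+1$, a generic $F\in\sigma_k(V^{d,n})$ has a Waring decomposition $F=l_1^d+\cdots+l_k^d$ with $l_1,\dots,l_k$ linearly independent; applying a suitable element of $\pgn$ and rescaling coordinates, we may assume $F=F_{\mathbf c}:=\sum_{i=0}^{k-1}c_ix_i^d$ for generic $c_i\in\K^*$. Then $\hessMat_{F_{\mathbf c}}=d(d-1)\,\mathrm{diag}(c_0x_0^{d-2},\dots,c_{k-1}x_{k-1}^{d-2},0,\dots,0)$, so $h_{F_{\mathbf c}}$ maps into the linear subspace $L\subset\P(S^2V)$ of diagonal quadratic forms supported on $x_0,\dots,x_{k-1}$, which satisfies $L\cong\P^{k-1}$ and $L=\langle l_1^2,\dots,l_k^2\rangle$. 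Because $H_{d,n,k}$ is $\pgn$--equivariant, its generic fibre has the same cardinality as the fibre over $X_{\mathbf c}:=\overline{h_{F_{\mathbf c}}(\bbV(F_{\mathbf c}))}$, so it suffices to analyse the latter.

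The next step is to locate $X_{\mathbf c}$ inside $L$. Since $h_{F_{\mathbf c}}$ only involves $x_0,\dots,x_{k-1}$, one gets $X_{\mathbf c}=(T_{\mathbf c}\circ\phi_{d-2})(D_{\mathbf c})$, where $D_{\mathbf c}=\{\sum_i c_iw_i^d=0\}\subset\P^{k-1}$, $\phi_{d-2}\colon\P^{k-1}\to\P^{k-1}$ is $[w]\mapsto[w_0^{d-2}:\cdots:w_{k-1}^{d-2}]$, and $T_{\mathbf c}$ is the diagonal transformation with weights $c_i$. Substituting $w_i=c_i^{-1/d}u_i$ rewrites this as $X_{\mathbf c}=R_{\mathbf a}(X_{\mathbf 1})$ with $a_i=c_i^{2/d}$, where $R_{\mathbf t}$ denotes the diagonal projective transformation with weight vector $\mathbf t$, $D_{\mathbf 1}=\{\sum_i u_i^d=0\}$, and $X_{\mathbf 1}:=\phi_{d-2}(D_{\mathbf 1})$ is independent of $\mathbf c$. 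Hence, restricting $H_{d,n,k}$ to the $\P^{k-1}$ of rank--$k$ forms supported on $\{x_0,\dots,x_{k-1}\}$, the variety $X_{\mathbf c}$ depends on $\mathbf c$ only through the class of $(c_i^{2/d})_i$ in the diagonal torus $\T$, modulo $\mathrm{Stab}_{\T}(X_{\mathbf 1})$.

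Two ingredients then finish the argument. First, one recovers the linear forms from $X_{\mathbf c}$: its linear span is all of $L$ (because $X_{\mathbf c}=R_{\mathbf a}(X_{\mathbf 1})$ and $X_{\mathbf 1}$ spans $\P^{k-1}$, since $D_{\mathbf 1}$, irreducible of degree $d$, cannot lie in the $\phi_{d-2}$--preimage of a hyperplane, which has degree $d-2$), and $L\cap V^{2,n}=\{[l_1^2],\dots,[l_k^2]\}$ because a diagonal quadratic form supported on $x_0,\dots,x_{k-1}$ has rank one only if it is a scalar multiple of some $x_i^2$; consequently every $G$ of Waring rank $\le k$ with $\overline{h_G(\bbV(G))}=X_{\mathbf c}$ is supported on the same linear forms $l_i$, which brings us back to the $\P^{k-1}$ of parameters. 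Second, one computes $\mathrm{Stab}_{\T}(X_{\mathbf 1})$: a vector $\mathbf t$ of $d$--th roots of unity with each $t_i$ in the image of $u\mapsto u^{d-2}$ on $\mu_d$ satisfies $R_{\mathbf t}(X_{\mathbf 1})=X_{\mathbf 1}$ by a direct substitution, while any stabilising $R_{\mathbf t}$ must preserve $X_{\mathbf 1}\cap\langle e_i,e_j\rangle$ for each line through coordinate vertices $e_i,e_j$, and this intersection is exactly a coset of $\mu_d$ of $d$ points if $d$ is odd and a coset of $\mu_{d/2}$ of $d/2$ points if $d$ is even; the outcome is $\mathrm{Stab}_{\T}(X_{\mathbf 1})=\mu_d^{k}/\mu_d$ for $d$ odd and $=\mu_{d/2}^{k}/\mu_{d/2}$ for $d$ even. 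Unwinding the fractional powers, $X_{\mathbf b}=X_{\mathbf c}$ if and only if $[(b_i^2)_i]=[(c_i^2)_i]$ when $d$ is odd, and if and only if $[\mathbf b]=[\mathbf c]$ when $d$ is even. Therefore, up to the isomorphism with its image, $H_{d,n,k}$ is the coordinate--squaring map $[\mathbf c]\mapsto[(c_i^2)_i]$ of $\P^{k-1}$ when $d$ is odd — a finite morphism of degree $2^{k-1}$ — and is generically injective, hence birational onto its image, when $d$ is even.

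The hard part will be the exact computation of $\mathrm{Stab}_{\T}(X_{\mathbf 1})$: it needs a careful local analysis of $X_{\mathbf 1}=\phi_{d-2}(D_{\mathbf 1})$ along the coordinate flats of $\P^{k-1}$ — in particular, ruling out any extra points of $X_{\mathbf 1}\cap\langle e_i,e_j\rangle$ beyond the expected coset of $\mu_d$, resp. $\mu_{d/2}$, after passing to Zariski closures — and it is precisely the value $\gcd(d,d-2)\in\{1,2\}$ surfacing here that produces the dichotomy between degree $2^{k-1}$ and birationality. A secondary point requiring care is the bookkeeping of the choices of $d$--th roots in the identity $X_{\mathbf c}=R_{(c_i^{2/d})_i}(X_{\mathbf 1})$, together with the verification that every element of a generic fibre of $H_{d,n,k}$ indeed comes from a polynomial supported on linearly independent linear forms.
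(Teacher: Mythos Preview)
Your strategy is essentially the paper's: use $\pgn$--equivariance to reduce to diagonal forms $F_{\mathbf c}=\sum_{i<k}c_ix_i^d$, then determine when two such forms have the same Hessian variety. The paper carries out the second step by a direct coordinate computation (Proposition~2.5), choosing points of $\bbV(F_\lambda)$ supported on two coordinates and comparing their images under $h_\lambda$ and $h_{\mathbf 1}$; your stabilizer computation via the intersections $X_{\mathbf 1}\cap\langle e_i,e_j\rangle$ is the same calculation in torus--action language, and your identification of $\gcd(d,d-2)$ as the source of the odd/even dichotomy matches the paper's analysis exactly.

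The one point where your outline is genuinely incomplete is the reduction step you flag at the end: your argument that any $G$ in the fibre is supported on the $l_i$ uses a Waring decomposition $G=\sum m_i^d$ of length $\le k$, but an arbitrary $G\in\sigma_k(V^{d,n})$ with $\overline{h_G(\bbV(G))}=X_{\mathbf c}$ need not have Waring rank $\le k$, so the $L\cap V^{2,n}$ trick does not apply to it directly. The paper closes this gap in Lemma~2.3 by a degree argument: since $X_{\mathbf c}\subset L$, every off--diagonal second partial $\partial^2G/\partial x_i\partial x_j$ (a form of degree $d-2$) vanishes on the degree--$d$ hypersurface $\bbV(G)$, hence is identically zero; Euler's formula then forces $G$ to be diagonal in $x_0,\dots,x_{k-1}$. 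You should invoke the same argument (and, if you want to be thorough about the squarefree hypothesis it implicitly uses, note that a non--squarefree $G$ would have a component of its Hessian variety contained in the rank--one locus $V^{2,n}$, which $X_{\mathbf c}$ does not for generic $\mathbf c$, since $L\cap V^{2,n}$ consists of the $k$ coordinate points).
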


\para 

\para 
 
\begin{theorem*}[Theorem \ref{theo:inj H3 1}]
    For $n\geq 2$, $H_{3,n}$ is birational onto its image.
\end{theorem*}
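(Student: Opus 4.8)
The plan is to describe the fiber of $H_{3,n}$ over the Hessian variety $X:=H_{3,n}(F)$ of a generic cubic $F$ and to show it reduces to the single point $[F]$; generic injectivity of a dominant rational map over a field of characteristic $0$ then gives birationality onto the image. Since $\deg F=3$, every entry of $\hessMat_F$ is a linear form, so $h_F$ is the projectivization of a linear map $c_F\colon V\to\SS$, $v\mapsto\hessMat_F(v)$. For generic $F$ the second partials span $V^{*}$, hence $c_F$ is injective and $h_F$ is a linear isomorphism onto the $n$--plane $L_F:=h_F(\P^n)$. Using that for $d=3$ the variety $h_F(\P^n)$ is the smallest linear space containing $H_{3,n}(F)$, one recovers $L_F=\langle X\rangle$ from $X$; moreover any cubic $G$ with $H_{3,n}(G)=X$ has $\langle X\rangle=L_G$ of dimension $n$, which forces $h_G$ to be a linear isomorphism onto $L_G=L_F$ too. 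Then $\phi:=h_G^{-1}\circ h_F\in\pgn$ is defined, and chasing $\overline{h_F(\bbV(F))}=X=\overline{h_G(\bbV(G))}$ together with $h_F=h_G\circ\phi$ yields, for a lift $B$ of $\phi^{-1}$, that $G=F\circ B$ up to scalar and $B^{\mathrm t}c_F(u)B=\lambda\,c_F(u)$ for all $u\in V$ and some scalar $\lambda$; conversely every such $B$ gives a fiber point $[F\circ B]$. Hence the fiber over $X$ equals $\{[F\circ B]:B\in\Gamma_F\}$, where $\Gamma_F:=\{B\in\mathrm{GL}(n+1):\exists\lambda,\ B^{\mathrm t}c_F(u)B=\lambda c_F(u)\ \forall u\}$ is an algebraic group containing the scalars, and it suffices to prove $\Gamma_F=\K^{*}I$ for generic $F$.

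\textbf{The stabilizer is scalar.} Writing $T\in\S$ for the symmetric $3$--tensor of $F$, the defining condition of $\Gamma_F$ reads $T(Ba,Bb,v)=\lambda\,T(a,b,v)$ for all $a,b,v$. Differentiating, $A\in\mathrm{Lie}(\Gamma_F)$ satisfies $T(Aa,b,v)+T(a,Ab,v)=\mu\,T(a,b,v)$; exploiting only the total symmetry of $T$ one proves $T(Aa,b,v)=T(a,Ab,v)$, hence $T(Aa,b,v)=\tfrac{\mu}{2}T(a,b,v)$, i.e. $A$ lies in the Lie algebra of the stabilizer of $[F]$ in $\pgn$. For generic $F$ with $n\ge 2$ (the plane--cubic case included) that stabilizer is finite, so $\mathrm{Lie}(\Gamma_F)=\K I$ and $\Gamma_F/\K^{*}$ is finite. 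Any $B\in\Gamma_F$ is therefore, up to scalar, of finite order, hence diagonalizable; in an eigenbasis $Be_i=\zeta_i e_i$ the relation becomes $\zeta_i\zeta_j\,T_{ijk}=\lambda\,T_{ijk}$, so $T_{ijk}\ne 0$ forces $\zeta_i=\zeta_j=\zeta_k$ and $\zeta_i^{2}=\lambda$. As a generic $F$ is not a cone, every $\zeta_i\in\{\pm\sqrt{\lambda}\}$ and $F$ splits as $F_{+}+F_{-}$ with $F_{\pm}$ a cubic in the $(\pm\sqrt{\lambda})$--eigenspace; but a generic cubic in $n+1\ge 3$ variables is not a sum of two cubics in complementary variables (for $n=2$ this would produce a point whose polar conic is a double line, impossible generically; in general the decomposable locus has strictly smaller dimension). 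Thus one eigenspace vanishes, $B$ is scalar, and $\Gamma_F=\K^{*}I$.

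\textbf{Conclusion and the main obstacle.} Then the fiber of $H_{3,n}$ over a generic $X$ is $\{[F]\}$, so $H_{3,n}$ is generically injective, hence birational onto its image. The hard part is the second step: passing from the twisted identity $T(Ba,Bb,v)=\lambda T(a,b,v)$ --- where $B$ acts on only two of the three tensor slots --- to genuine automorphism or decomposability statements. The infinitesimal version yields to symmetrization, but for the finite part one must first establish finiteness of $\Gamma_F/\K^{*}$ and then kill each finite--order element through its eigenspace decomposition, the crucial input being the non-degeneracy of a generic cubic in at least three variables (not a cone, not decomposable, finite automorphism group). I would also be careful in the first step with fiber members $G$ that are a priori non-generic, deducing that $h_G$ is a linear isomorphism onto $\langle X\rangle$ purely from $\dim\langle X\rangle=n$.
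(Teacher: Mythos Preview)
Your proof is correct and takes a genuinely different route from the paper. The paper factors $H_{3,n}$ through the map $\alpha_n:\P(S^3V)\dashrightarrow\Gr(n,\P(S^2V))$, $F\mapsto\P(\langle\nabla F\rangle)=h_F(\P^n)$, observes that $\alpha_n=\beta_n\circ H_{3,n}$ where $\beta_n$ takes the linear span, and then proves $\alpha_n$ birational by exhibiting one explicit polynomial $F=\sum_i x_i^2x_{i+1}$ for which the linear system $\{G:\langle\nabla G\rangle\subseteq\langle\nabla F\rangle\}$ is checked, coordinate by coordinate, to reduce to $F$ alone. Your argument instead stays with a generic $F$, identifies the fiber with the group $\Gamma_F=\{B:B^{\mathrm t}c_F(u)B=\lambda c_F(u)\}$ modulo scalars, and kills $\Gamma_F/\K^{*}$ in two steps: the infinitesimal part (which works because the symmetry of $T$ upgrades the two-slot condition $T(Ba,Bb,c)=\lambda T(a,b,c)$ to all three cyclic versions, so that subtracting the differentiated identities forces $T(Aa,b,c)=\tfrac{\mu}{2}T(a,b,c)$), and then the finite part via eigenspace decomposition and the indecomposability of a generic cubic in $\ge 3$ variables---the dimension count $2k(n{+}1{-}k)<\binom{n+3}{3}-\binom{k+2}{3}-\binom{n-k+3}{3}$ reduces to $n\ge 2$, exactly the hypothesis. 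Both approaches are valid; the paper's is constructive and feeds directly into its recovery algorithm and its analysis of the variety of $k$--gradients, while yours is more structural and explains \emph{why} the fiber is trivial in terms of the symmetry group of the Hessian pencil. One minor remark: your detour through ``the stabilizer of $[F]$ in $\pgn$ is finite'' is unnecessary---once you have $T(Aa,b,c)=\tfrac{\mu}{2}T(a,b,c)$, setting $A'=A-\tfrac{\mu}{2}I$ gives $T(A'a,b,c)=0$ for all $b,c$, which says every $A'a$ lies in the vertex locus of $F$; hence $A'=0$ as soon as $F$ is not a cone.
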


\para 

\para

\begin{theorem*}[Theorem \ref{theo:4,n}]
For $n\geq 1$, $H_{4,n}$ is birational onto its image.
\end{theorem*}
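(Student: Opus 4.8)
The plan is to show that for a generic quartic $F$ the fibre of $H_{4,n}$ over $H_{4,n}(F)$ reduces to the single point $\{F\}$; this is precisely birationality onto the image. The key observation, available exactly because $\deg F=4$, is that every entry of the Hessian matrix $\hessMat_F$ is a quadratic form, so the Hessian map factors as $h_F=\phi_F\circ v_2$, where $v_2\colon\P^n\hookrightarrow\P(S^2V)$ is the $2$-uple Veronese embedding — equal, under the identification of $\P(S^2V)$ with symmetric matrices, to the rank-one locus — and $\phi_F\in\mathrm{End}(S^2V)$ is (a scalar multiple of) the middle catalecticant of $F$. We use three facts: $F\mapsto\phi_F$ is linear and injective; it has transformation law $\phi_{F\circ g}=S^2(g)^{\mathrm{t}}\,\phi_F\,S^2(g)$ for $g\in\mathrm{GL}(V)$, and $S^2(g)$ is a scalar endomorphism only if $g$ is (using $\operatorname{char}\K=0$); and $\phi_F$ is invertible for generic $F$, the non-invertible locus being a proper closed subvariety of $\P(S^4V)$ — it is avoided, for instance, by a generic sum of $\binom{n+2}{2}$ fourth powers of linear forms.

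Assume first $n\ge 2$ and fix $F$ generic, so $\phi_F$ is an isomorphism; set $X:=H_{4,n}(F)=\phi_F\big(v_2(\bbV(F))\big)$ and $W:=\phi_F\big(v_2(\P^n)\big)$. Since no quadric vanishes on the irreducible quartic $\bbV(F)$, the set $v_2(\bbV(F))$ spans $\P(S^2V)$, hence so does $X$; and $W$ is a Veronese variety containing $X$. Let $G$ be any quartic with $H_{4,n}(G)=X$. As $X=\overline{\phi_G\big(v_2(\bbV(G))\big)}$ is contained in $\P(\operatorname{im}\phi_G)$ and spans $\P(S^2V)$, the endomorphism $\phi_G$ is surjective, hence an isomorphism, and $h_G(\P^n)=\phi_G\big(v_2(\P^n)\big)$ is again a Veronese variety containing $X$. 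By Proposition \ref{prop:uniq vero} it equals $W$, so $\phi_G^{-1}\phi_F$ preserves $v_2(\P^n)$; as the linear automorphisms of $\P(S^2V)$ stabilizing the Veronese are exactly the $S^2(g)$ with $g\in\pgn$, we obtain $\phi_F=\phi_G\,S^2(g)$ for some $g\in\pgn$.

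Pulling back the divisor now gives $v_2(\bbV(G))=\phi_G^{-1}(X)=\phi_G^{-1}\phi_F\big(v_2(\bbV(F))\big)=S^2(g)\big(v_2(\bbV(F))\big)=v_2\big(g\cdot\bbV(F)\big)$, hence $\bbV(G)=g\cdot\bbV(F)$ and $G=F\circ g^{-1}$ in $\P(S^4V)$. Now compare catalecticants: $\phi_F=\phi_G\,S^2(g)$ gives $\phi_G=\phi_F\,S^2(g^{-1})$ up to a scalar, while $G=F\circ g^{-1}$ gives $\phi_G=S^2(g^{-1})^{\mathrm{t}}\,\phi_F\,S^2(g^{-1})$ up to a scalar; cancelling the invertible factor $S^2(g^{-1})$ on the right yields $\phi_F=c\,S^2(g^{-1})^{\mathrm{t}}\,\phi_F$ for a scalar $c$, so $S^2(g^{-1})^{\mathrm{t}}$, and hence $S^2(g)$, is scalar, which forces $g=\mathrm{id}$ in $\pgn$. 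Therefore $G=F$, the generic fibre is a point, and $H_{4,n}$ is birational onto its image when $n\ge 2$.

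The remaining case $n=1$ is the main obstacle, because Proposition \ref{prop:uniq vero} is unavailable — a conic is not determined by four of its points. Here $\bbV(F)$ and $X$ are four points and $W=h_F(\P^1)$ is a conic through the four points of $X$. One route is: (i) show that the conics of the form $h_G(\P^1)$, for $G$ a binary quartic, fill a hypersurface in the $\P^5$ of plane conics — a four-dimensional family, since the stabilizer ambiguity $S^2(\pgl)$ is cut down to a finite set by the requirement that $\phi_G$ be of catalecticant (Hankel) type; (ii) conclude that only finitely many such conics pass through the four points of $X$, and that for each of them the binary quartics $G$ with $h_G(\P^1)$ equal to that conic and $\overline{h_G(\bbV(G))}=X$ form a finite set, so $H_{4,1}^{-1}(X)$ is finite; (iii) show by a direct computation that this fibre is the single point $F$, the parity of the degree excluding the genuine two-to-one phenomenon displayed by $H_{3,1}$. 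For $n\ge 2$, by contrast, the only point requiring care is that the genericity hypotheses invoked above hold on a dense open subset of $\P(S^4V)$, which they do.
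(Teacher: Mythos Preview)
Your argument for $n\geq 2$ is correct and is, in substance, the paper's proof recast in the language of the middle catalecticant. The paper takes $g=h_F^{-1}\circ h_G$, uses $G=g^{\mathrm t}\cdot F$ and the chain rule to write $h_G=M\circ h_F\circ g$ for an explicit linear automorphism $M$ of $\P(S^2V)$, observes that $h_G=h_F\circ g$ forces $M$ to be the identity on the nondegenerate Veronese $h_F(\P^n)$, hence $M=\mathrm{Id}$, and reads off $g=\mathrm{Id}$ from the shape of $M$. Your $S^2(g)$ is exactly this $M$ (up to transpose), and your two expressions for $\phi_G$ correspond to the paper's two expressions for $h_G$; the catalecticant formulation is a bit cleaner and makes the role of the symmetric-square representation transparent, but the content is the same.

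For $n=1$, however, your proof is not complete. You correctly identify that Proposition~\ref{prop:uniq vero} fails, and you outline a three-step route, but step~(iii) is the entire problem: ``show by a direct computation that this fibre is the single point $F$'' is exactly the statement to be proved, and the parity remark about $H_{3,1}$ is a heuristic, not an argument. Steps (i)--(ii), even if granted, only yield generic finiteness, not degree $1$. The paper handles $n=1$ quite differently (Proposition~\ref{prop:H41}): a direct symbolic computation shows that the ideal of $H_{4,1}(F)$ is generated by two explicit conics $Q_1,Q_2$ in the coordinates of $\P(S^2V)$, and $Q_1$ is singled out as the unique member of the pencil lying on the hyperplane $b_2-2b_3=0$; its coefficients are $(3a_4,-3a_3,2a_2,a_2,-3a_1,3a_0)$, which recover $F$ immediately. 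This is much shorter than your proposed route and bypasses any analysis of which conics in $\P^2$ arise as $h_G(\P^1)$. If you want to keep the $n\ge2$ argument as written, you should replace your $n=1$ paragraph with either this explicit computation or an equally concrete one.
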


\para 

In addition, 
we provide effective algorithms for recovering a hypersurface from its Hessian variety 
 for $d=3$ and $n\geq 1$ (see Remark \ref{rem:fiberH31} and Algorithm \ref{alg:1} in Section \ref{sec:h3n>1}), and for $d=4$ and $n$ even (see Remark  \ref{rem:rec H41} and Algorithm  \ref{alg:2} in Section  \ref{subsec:alg d=4}). Moreover, as a consequence of Algorithm \ref{alg:2}, we provide a method for computing a parametrization of a Veronese variety of even dimension arising from a quadratic Veronese embedding (see Remark \ref{rem:para}).

\para 

The paper is structured as follows. In Section \ref{sec:sym}, we study the Hessian correspondence for polynomials with Waring rank at most $n+1$.
We prove that 
$H_{d,n,k}$ for $k\leq n+1$ is birational onto its image for $d$ even. For $d$ odd we show that a generic fiber of $H_{d,n,k}$ consists of $2^{k-1}$ points. 
In particular, we show that $H_{3,1}$ is a two to one map.
 In Section \ref{sec:d=3} we study $H_{3,n}$ and some related varieties. In Subsection \ref{sec 3,1} we provide a method for computing the fibers of $H_{3,1}$. We construct the rational involution of $\P(\S)$ that preserves the preimages of $H_{3,1}$ and we describe the variety of $3$ plane points that can be obtained from $H_{3,1}$. 
 In Subsection \ref{sec:var k planes} we focus on the rational map
\begin{equation}\label{eq:alpha first}\begin{array}{cccc}
\alpha_n:&\P(\S)&\dashrightarrow&\Gr(n,\P(\SS))\\
 & F&\longmapsto& h_F(\P^n),\end{array}
\end{equation}
and we introduce  the \textsf{variety  of $k$--gradients } $\phi_k\subseteq \mathbb{G}\mathrm{r}(k,\P(S^2V))$ as the variety of $k$--planes $\Gamma$ containing all the first order derivatives of a polynomial $F\in\P(S^3V)$.
In Subsection \ref{sec:alphak} we prove that $\alpha_n$ is birational onto its image.
In Subsection \ref{sec:h3n>1} we use the map $\alpha_n$ to 
  prove that $H_{3,n}$
is birational onto its image for $n\geq 2$ and we provide an algorithm for recovering the hypersurface from its Hessian variety. The irreducible components of $\phi_k$ are described in Subsection \ref{sec:irred grad}.

\para 

Section \ref{sec:d=4} is devoted to the case $d=4$. 
In Subsections \ref{sec:41} we show that $H_{4,1}$ is birational onto its image, we provide a method for computing the fibers and we describe the variety of $4$ plane points that can be obtained from $H_{4,1}$. In Subsection \ref{4n>1} we prove that $H_{4,n}$ is birational onto its image for $n\geq 2$, and in Subsection \ref{subsec:alg d=4}  an effective algorithm for recovering $F$ from $H_{4,n}(F)$ when $n$ even is derived.

\para

At the end of the paper we summarize the conclusions and we point out some   open questions.

\section{Hessian correspondence and Waring rank}\label{sec:sym}

We start this section with a motivating example. Let $n=3$ and $d=3$, and consider the polynomials $F = x_0^3+x_1^3+x_2^3$  and $G = x_0^3+x_1^3-x_2^3$. Both $F$ and $G$ have Waring rank $3$ and their Hessian matrices are the diagonal matrices
\[
\begin{pmatrix}
    6x_0&0&0\\0&6x_1&0\\ 0&0&6x_2
\end{pmatrix}
\text{ and } 
\begin{pmatrix}
    6x_0&0&0\\0&6x_1&0\\ 0&0&-6x_2
\end{pmatrix}
\]
respectively. In particular, the corresponding Hessian maps $h_F$ and $h_G$ factors through the closed embedding of $\bbV(z_{01},z_{02},z_{12})$ in $\P(S^2V)$. Moreover,  one can check that both the Hessian variety of $F$ and $G$ are equal to 
\[
\bbV(z_{01},z_{02},z_{12},z_{00}^3+z_{11}^3+z_{22}^3)\subset\P(S^2V)\simeq\P^5.
\]
Similarly, the Hessian variety of $ x_0^3-x_1^3+x_2^3$ and $-x_0^3+x_1^3+x_2^3$  equals to $H_{3,2}(F)$ too.  In particular, we deduce that there exists a locus in $\P(S^3V)$ where the $H_{3,2}$  is not injective but has degree $4$. 
In this section we study how the Hessian correspondence acts on the locus of these types of polynomials.

\para 

In \eqref{eq:Hess corr}, we defined the Hessian correspondence as a rational map from $\P(S^dV)$ to certain Hilbert scheme. Similarly, in \eqref{eq:Hess corr rank}, we defined the restriction $ H_{d,n,k}$ of the Hessian correspondence to $\sigma_k(V^{d,n})$. Note that the target spaces of $H_{d,n}$ and $H_{d,n,k}$ might be distinct Hilbert schemes. 
For instance, in Section \ref{sec:d=4} we will see that the Hessian variety of a generic polynomial of degree $4$ is the intersection of a Veronese variety in $\P(S^2V)$ with a quadratic hypersurface. On the other hand, we will see that the Hessian variety of a generic element in $\sigma_{n+1}(V^{4,n})$ is the intersection of a quadratic hypersurface with an $n$ dimensional linear subspace. One can check that both Hessian varieties have different Hilbert polynomial. Therefore, the maps $H_{d,n}$ and $H_{d,n,k}$ are distinct since their target spaces differ.

\para 

In this section we study the rational map $H_{d,n,k}$ for $k\leq n+1$. 
To do so, for any $d$ we consider the  group action of $\pgn$ on $\P(\mathrm{S}^{d}V )$ given  by  $g\cdot F=F\circ g^{\mathrm{t}}$ for $g\in\pgn$ and $F\in\P(\mathrm{S}^{d}V )$.
This defines a representation $\rho:\mathrm{PGL}(n+1)\rightarrow\mathrm{PGL}(S^dV)$. Since the target space of the Hessian map is $\P(S^2V)$, the representation $\rho$ for $d=2$ will play a fundamental role. The next lemma analyzes how this action interacts with the Hessian map.

\para 

\begin{lemma}\label{lemma:n=1 2} Let $n\geq 1$ and $d\geq 3$. Then,
 for $F\in \P(\mathrm{S}^{d}V)$ and $g\in\mathrm{PGL}(n+1)$, we have that $\rho(g^{\mathrm{t}})^{\mathrm{t}}\circ h_F = h_{g\cdot F}\circ (g^{\mathrm{t}})^{-1}$. In particular, we have that $H_{d,n}(g\cdot F) = \rho(g^{\mathrm{t}})^{\mathrm{t}}(H_{d,n}(F))$.
\end{lemma}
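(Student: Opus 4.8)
The plan is a direct second-order chain-rule computation, combined with an explicit matrix description of the $d=2$ representation $\rho$.

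First I would record how the Hessian matrix transforms under a linear substitution. Writing $g\cdot F=F\circ g^{\mathrm{t}}$, i.e. $(g\cdot F)(x)=F(g^{\mathrm{t}}x)$, I differentiate twice by the chain rule: with $y=g^{\mathrm{t}}x$ one has $\partial y_j/\partial x_i=g_{ij}$, so $\partial(g\cdot F)/\partial x_i=\sum_j g_{ij}\,(\partial F/\partial y_j)(y)$ and, differentiating once more, $\partial^2(g\cdot F)/\partial x_i\partial x_k=\sum_{j,l}g_{ij}\,(\hessMat_F(y))_{jl}\,g_{kl}$. In matrix form this is $\hessMat_{g\cdot F}(x)=g\,\hessMat_F(g^{\mathrm{t}}x)\,g^{\mathrm{t}}$. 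Viewing $\P(\SS)$ as the projectivized space of symmetric matrices, this says that $h_{g\cdot F}\circ(g^{\mathrm{t}})^{-1}$ is the composition of $h_F$ with the linear automorphism $\ell_g\colon M\mapsto gMg^{\mathrm{t}}$ of $\P(\SS)$.

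Next I would identify $\ell_g$ with $\rho(g^{\mathrm{t}})^{\mathrm{t}}$. Under the identification $Q(x)=x^{\mathrm{t}}Mx\leftrightarrow M$ of $\SS$ with symmetric matrices, the action $\rho(a)\colon Q\mapsto Q\circ a^{\mathrm{t}}$ becomes $M\mapsto aMa^{\mathrm{t}}$, since $(Q\circ a^{\mathrm{t}})(x)=x^{\mathrm{t}}aMa^{\mathrm{t}}x$. Hence $\rho(g^{\mathrm{t}})$ is the map $M\mapsto g^{\mathrm{t}}Mg$, and its transpose with respect to the natural pairing $\langle M,N\rangle=\operatorname{tr}(MN)$ on symmetric matrices — under which the adjoint of $M\mapsto AMA^{\mathrm{t}}$ is $N\mapsto A^{\mathrm{t}}NA$ — is exactly $N\mapsto gNg^{\mathrm{t}}=\ell_g$. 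Together with the previous step this gives $\rho(g^{\mathrm{t}})^{\mathrm{t}}\circ h_F=h_{g\cdot F}\circ(g^{\mathrm{t}})^{-1}$, the first claim.

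For the final assertion I would rewrite this identity as $h_{g\cdot F}=\rho(g^{\mathrm{t}})^{\mathrm{t}}\circ h_F\circ g^{\mathrm{t}}$; since $\bbV(g\cdot F)=(g^{\mathrm{t}})^{-1}\bbV(F)$, the automorphism $g^{\mathrm{t}}$ of $\P^n$ maps $\bbV(g\cdot F)$ onto $\bbV(F)$, whence $h_{g\cdot F}(\bbV(g\cdot F))=\rho(g^{\mathrm{t}})^{\mathrm{t}}\big(h_F(\bbV(F))\big)$. As $\rho(g^{\mathrm{t}})^{\mathrm{t}}$ is a linear isomorphism of $\P(\SS)$ it commutes with Zariski closure, so taking closures yields $H_{d,n}(g\cdot F)=\rho(g^{\mathrm{t}})^{\mathrm{t}}(H_{d,n}(F))$. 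The computation is entirely routine; the only thing requiring care is the consistent bookkeeping of the various transposes and of the inverse of $g^{\mathrm{t}}$, together with matching the transpose operation on $\mathrm{PGL}(\SS)$ with the one induced by the trace (apolarity) pairing on symmetric matrices — morally the lemma just says the Hessian map is equivariant, up to a transpose, under linear changes of coordinates, so I do not anticipate a genuine obstacle. All the equalities above should be read as equalities of rational maps on their common domains of definition.
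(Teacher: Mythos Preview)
Your proof is correct and follows essentially the same route as the paper: both compute the Hessian of $g\cdot F$ via the chain rule and identify the resulting congruence $M\mapsto gMg^{\mathrm t}$ with $\rho(g^{\mathrm t})^{\mathrm t}$, then deduce the statement about $H_{d,n}$ from $\bbV(g\cdot F)=(g^{\mathrm t})^{-1}\bbV(F)$. The only cosmetic difference is that the paper carries out the computation entry by entry in the coordinates $z_{i,j}$, whereas you package it as the matrix identity $\hessMat_{g\cdot F}(x)=g\,\hessMat_F(g^{\mathrm t}x)\,g^{\mathrm t}$ and invoke the trace pairing to identify the adjoint; under the paper's convention that $z_{i,j}$ is the $(i,j)$ matrix entry (so all pairs $(i,j)$ are used, with $z_{i,j}=z_{j,i}$), this basis is orthonormal for the trace form and your adjoint really is the matrix transpose.
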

\begin{proof}
By the Leibniz Rule, for $i,j\in[n]$ we have that 
$
\frac{\partial^2 g\cdot F }{\partial x_i\partial x_j} = \sum_{k,l} 
 g^{\mathrm{t}}_{k,i}
 g^{\mathrm{t}}_{l,j}
\frac{\partial^2  F }{\partial x_k\partial x_l}\circ  g^{\mathrm{t}}.
$
Hence, the coordinate $z_{i,j}$ of the composition $h_{ g^{\mathrm{t}}\cdot F}\circ (g^{\mathrm{t}})^{-1}$ is
\begin{equation}\label{eq:diag1 proof}
\displaystyle
 \sum_{k,l} 
 g_{i,k}
 g_{j,l}
\frac{\partial^2  F }{\partial x_k\partial x_l}=  \sum_{k<l} 
 (g_{i,k} g_{j,l}+g_{i,l} g_{j,k})
\frac{\partial^2  F }{\partial x_k\partial x_l}+
 \sum_{k} 
 g_{i,k}
 g_{j,k}
\frac{\partial^2  F }{\partial x_k^2}
.
\end{equation}
On the other hand, 
  the coordinate $z_{i,j}$ of $\rho(g^{\mathrm{t}})^{\mathrm{t}}$  is 
 \[
 \displaystyle
\sum_{k<l}(g_{i,k}g_{j,l}+g_{i,l}g_{j,k})z_{k,l}+
 \sum_{k}g_{i,k}g_{j,k}z_{k,k}
 .
 \]
By \eqref{eq:diag1 proof}, the coordinate $z_{i,j}$ of $\rho(g^{\mathrm{t}})^{\mathrm{t}}\circ h_F$ is equal to the coordinate $z_{i,j}$ of $h_{ g\cdot F}\circ (g^{\mathrm{t}})^{-1}$. 

The equality $H_{d,n}(g\cdot F) = \rho(g^{\mathrm{t}})^{\mathrm{t}}(H_{d,n}(F))$ follows from the first part of the lemma and the fact that  $\bbV(g\cdot F) = (g^{\mathrm{t}})^{-1}(\bbV(F))$. 
\end{proof}

As a consequence of Lemma \ref{lemma:n=1 2}, we derive the following result. 

\para 

\begin{proposition}\label{prop:isofibers}
    Let $F\in\P(S^dV)$ and $g\in \mathrm{PGL}(n+1)$. Then, $g\cdot H_{d,n}^{-1}(H_{d,n}(F))=H_{d,n}^{-1}(H_{d,n}(g\cdot F))$. In particular the fibers of $H_{d,n}(F)$ and $H_{d,n}(g\cdot F)$ via $H_{d,n}$ are isomorphic.
\end{proposition}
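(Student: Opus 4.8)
The plan is to deduce Proposition~\ref{prop:isofibers} directly from Lemma~\ref{lemma:n=1 2}, using only that $\rho$ is a group homomorphism and that the $\mathrm{PGL}(n+1)$ action on $\P(S^dV)$ is an action by automorphisms. The key identity from the lemma is $H_{d,n}(g\cdot F) = \rho(g^{\mathrm{t}})^{\mathrm{t}}\bigl(H_{d,n}(F)\bigr)$ for all $F$ in the domain of definition and all $g\in\mathrm{PGL}(n+1)$. I would first observe that, since $\rho$ is a representation, $g\mapsto \rho(g^{\mathrm{t}})^{\mathrm{t}}$ is also a group homomorphism $\mathrm{PGL}(n+1)\to\mathrm{PGL}(S^2V)$: indeed $\rho((gh)^{\mathrm{t}})^{\mathrm{t}} = \rho(h^{\mathrm{t}}g^{\mathrm{t}})^{\mathrm{t}} = (\rho(h^{\mathrm{t}})\rho(g^{\mathrm{t}}))^{\mathrm{t}} = \rho(g^{\mathrm{t}})^{\mathrm{t}}\rho(h^{\mathrm{t}})^{\mathrm{t}}$. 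In particular each $\rho(g^{\mathrm{t}})^{\mathrm{t}}$ is an automorphism of $\P(S^2V)$.

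Next I would unwind the definition of the fibers. Write $Y := H_{d,n}(F)$. By the lemma, $H_{d,n}(g\cdot F) = \rho(g^{\mathrm{t}})^{\mathrm{t}}(Y)$. For any $F'\in\P(S^dV)$ we have the chain of equivalences: $F'\in H_{d,n}^{-1}(H_{d,n}(g\cdot F))$ iff $H_{d,n}(F') = \rho(g^{\mathrm{t}})^{\mathrm{t}}(Y)$ iff $\rho((g^{-1})^{\mathrm{t}})^{\mathrm{t}}\bigl(H_{d,n}(F')\bigr) = Y$ (applying the inverse automorphism) iff $H_{d,n}(g^{-1}\cdot F') = Y$ (by the lemma again, with $g$ replaced by $g^{-1}$) iff $g^{-1}\cdot F' \in H_{d,n}^{-1}(Y)$ iff $F' \in g\cdot H_{d,n}^{-1}(Y)$. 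This establishes the set-theoretic equality $g\cdot H_{d,n}^{-1}(H_{d,n}(F)) = H_{d,n}^{-1}(H_{d,n}(g\cdot F))$.

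For the final sentence, the isomorphism of fibers then follows because the action of $g$ on $\P(S^dV)$ is an isomorphism of varieties (it is given by $F\mapsto F\circ g^{\mathrm{t}}$, a linear change of coordinates), and it restricts to an isomorphism between the closed subvariety $H_{d,n}^{-1}(H_{d,n}(F))$ and its image, which by the previous paragraph is exactly $H_{d,n}^{-1}(H_{d,n}(g\cdot F))$. One should be slightly careful that these fibers are taken scheme-theoretically over a point of the Hilbert scheme, but since $g$ acts compatibly on both source and target (the induced automorphism $\rho(g^{\mathrm{t}})^{\mathrm{t}}$ of $\P(S^2V)$ induces an automorphism of $\hil^{p_{d,n}(t)}(\P(S^2V))$ sending $H_{d,n}(F)$ to $H_{d,n}(g\cdot F)$), the square commutes and the fiber isomorphism is immediate.

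I do not anticipate a genuine obstacle here: the whole statement is a formal consequence of Lemma~\ref{lemma:n=1 2} together with the functoriality of the $\mathrm{PGL}(n+1)$ action. The only mild point of care is keeping track of transposes and inverses so that $g\mapsto\rho(g^{\mathrm{t}})^{\mathrm{t}}$ is verified to be a homomorphism (rather than an anti-homomorphism), and making sure the manipulation respects the locus where $H_{d,n}$ is defined, which is $\mathrm{PGL}(n+1)$-invariant.
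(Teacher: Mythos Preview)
Your proposal is correct and follows essentially the same approach as the paper: both deduce the statement directly from Lemma~\ref{lemma:n=1 2} by chasing the identity $H_{d,n}(g\cdot F)=\rho(g^{\mathrm{t}})^{\mathrm{t}}(H_{d,n}(F))$, the paper via two inclusions and you via a single chain of equivalences. Your extra care in checking that $g\mapsto\rho(g^{\mathrm{t}})^{\mathrm{t}}$ is a genuine homomorphism and that the argument is compatible with the scheme structure is a welcome refinement over the paper's more terse treatment.
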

\begin{proof}
    Let $\tilde{F}\in\P(S^dV)$ with same Hessian variety as $F$. By Lemma \ref{lemma:n=1 2}, we get that 
    \[
    H_{d,n}(g\cdot \tilde{F }) = \rho(g^\mathrm{t})^\mathrm{t}( H_{d,n}(\tilde{F}))=\rho(g^\mathrm{t})^\mathrm{t}( H_{d,n}(F)) =  H_{d,n}(g\cdot F).
    \]
    In particular, $g\cdot \tilde{F}$ has same Hessian variety as $g\cdot F$. Hence, $g\cdot H_{d,n}^{-1}(H_{d,n}(F))$ is contained in $H_{d,n}^{-1}(H_{d,n}(g\cdot F))$. Similarly, since $g^{-1}\cdot g\cdot F= F$, we deduce that $g^{-1}\cdot H_{d,n}^{-1}(H_{d,n}(g\cdot F))$ is contained in $ H_{d,n}^{-1}(H_{d,n}(F))$. Now, the proof follows from applying $g$ to the previous inclusion.
\end{proof}

Note that Lemma \ref{lemma:n=1 2} and Proposition \ref{prop:isofibers} also hold if we replace $H_{d,n}$ by $H_{d,n,k}$ for any $k$. Now, assume that $k\leq n+1$ and let 
$\mathcal{U}_k\subseteq \sigma_k(V^{d,n})$ be the open subset of polynomials of the form $l_1^d+\cdots+l_{k}^d$ such that $l_1,\ldots,l_{k}\in S^1V$ are linearly independent. Note that $\mathcal{U}_k$ is a nonempty dense open subset of $\sigma_k(V^{d,n})$.  Moreover, since $k\leq n+1$, it is the $\mathrm{PGL}(n+1)$ orbit of the polynomial $F_k:= x_0^d+\cdots+x_{k-1}^d$. 
Therefore, by Proposition \ref{prop:isofibers}, in order to understand the fibers of $H_{d,n,k}$ restricted to $\mathcal{U}_k$, it is enough to study the fiber of the Hessian variety of the polynomial $F_k:=x_0^d+\cdots+x_{k-1}^d$. 

\para

Note that up to scalar multiplication, the Hessian matrix of $F_k$ is the diagonal matrix 
\[
\begin{pmatrix}
    x_0^{d-2}& & & & &\\
    & \ddots & & & & \\
     & & x_{k-1}^{d-2}& & &\\
     & & &0 & &\\
    & & & &\ddots &\\
    & & & & &0\\
\end{pmatrix}.
\]
In particular, the Hessian variety of $F_k$ is a hypersurface in the $k$--th dimensional linear subspace 

\begin{equation}\label{eq:proj sym}
\P(U):=\bbV(z_{i,j}:(i,j)\not\in \{(0,0),\ldots,(k-1,k-1)\})\subset\P(S^2V).
\end{equation}

Now, let $G\in \P(S^dV)$ be such that $F_k$ and $G$ have the same Hessian variety. This implies that $\frac{\partial^2 G}{\partial x_ix_j}= 0 $
 for $(i,j)\not\in \{(0,0),\ldots,(k-1,k-1)\}$. From the Euler's formula we deduce that the first order derivatives of $G$ with respect to $x_{k},\ldots,x_n$ vanish. Hence, $G$ is a polynomial in the variables $x_0,\ldots,x_{k-1}$. Therefore, in order to study $H_{d,n,k}$, we assume that $k=n+1$.

\para 

\begin{lemma}\label{lemma:sym fibers}
Let $G\in \P(S^dV)$ with the same Hessian variety as $F_{n+1}$. Then, $G = \lambda_0x_0^d+ \cdots+\lambda_nx_n^d$ for $\lambda_0,\ldots,\lambda_n\neq 0$. 
\end{lemma}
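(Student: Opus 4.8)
\noindent The plan is to combine the reduction made in the paragraph preceding the statement with two elementary observations. Recall from that discussion that, since $G$ and $F_{n+1}$ have the same Hessian variety, $\partial^2 G/\partial x_i\partial x_j=0$ for every pair $i\neq j$. I would first unpack this to pin down the shape of $G$: writing $G=\sum_{|\alpha|=d}c_\alpha x^\alpha$, for fixed $i\neq j$ the polynomial $\partial^2 G/\partial x_i\partial x_j$ equals $\sum_{\alpha_i,\alpha_j\geq 1}c_\alpha\,\alpha_i\alpha_j\,x^{\alpha-e_i-e_j}$, a sum of pairwise distinct monomials, so (as $\mathrm{char}\,\mathbb{K}=0$) it vanishes if and only if $c_\alpha=0$ whenever $\alpha_i\geq 1$ and $\alpha_j\geq 1$. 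Letting $i\neq j$ range over all pairs, this says exactly that every monomial of $G$ involves at most one variable, and homogeneity of degree $d$ then gives $G=\lambda_0 x_0^d+\cdots+\lambda_n x_n^d$ for some $\lambda_0,\dots,\lambda_n\in\mathbb{K}$.

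\noindent It then remains to show $\lambda_j\neq 0$ for each $j$, which I would do by contradiction. If $\lambda_j=0$, then $G$ does not involve $x_j$, so $\partial^2 G/\partial x_j^2\equiv 0$; hence the $(j,j)$-entry of $\hessMat_G$ vanishes identically, $h_G$ takes values in $\P(U)\cap\bbV(z_{j,j})$, and therefore the Hessian variety of $G$ lies in $\bbV(z_{j,j})$. On the other hand, $\hessMat_{F_{n+1}}$ is, up to a nonzero scalar, the diagonal matrix with entries $x_0^{d-2},\dots,x_n^{d-2}$, so $h_{F_{n+1}}$ is regular on all of $\P^n$; and for $n\geq 1$ there is a point $p\in\bbV(F_{n+1})$ with all coordinates nonzero (for $n=1$ every point of $\bbV(x_0^d+x_1^d)$ works, and for $n\geq 2$ one may take $p=[1:\cdots:1:\mu]$ with $\mu^d=-n$), whence $h_{F_{n+1}}(p)$ has $z_{i,i}\neq 0$ for all $i$. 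Thus the Hessian variety of $F_{n+1}$ is not contained in $\bbV(z_{j,j})$, contradicting the hypothesis that it coincides with that of $G$; so all $\lambda_i$ are nonzero.

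\noindent The only step I expect to require genuine care is this last one: the Hessian variety of $F_{n+1}$ is the closure of the image of the hypersurface $\bbV(F_{n+1})$, not of all of $\P^n$, so one must certify that this image is not contained in a coordinate hyperplane of $\P(U)$; exhibiting one explicit point of $\bbV(F_{n+1})$ with all coordinates nonzero, together with the regularity of $h_{F_{n+1}}$ there, is what closes this gap. The remaining steps are routine bookkeeping with monomials.
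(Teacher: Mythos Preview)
Your proof is correct and follows the same overall structure as the paper: first use the vanishing of all mixed second partials (established just before the lemma) to force $G=\sum_i\lambda_i x_i^d$, then argue by contradiction that each $\lambda_i$ is nonzero. The first half is identical in spirit to the paper's monomial bookkeeping.

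The only noteworthy difference is in the contradiction step. The paper argues by dimension: if some $\lambda_j=0$ then the Hessian map of $G$ factors through a projection to $\P^{n-1}$, so its Hessian variety sits inside an $(n-1)$--plane and has dimension at most $n-2$, whereas the Hessian variety of $F_{n+1}$ has dimension $n-1$. You instead show directly that the Hessian variety of $F_{n+1}$ is not contained in the coordinate hyperplane $\bbV(z_{j,j})$ by exhibiting an explicit point of $\bbV(F_{n+1})$ with all coordinates nonzero. Your version is a touch more concrete and self-contained (no dimension count needed), while the paper's version is shorter once one accepts the dimension claim; either way the step is elementary and both arguments are valid.
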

\begin{proof}
    Let $a_w$ be the coefficient of $x^w:=x_0^{w_0}\cdots x_n^{w_n}$ for $w=(w_0,\cdots, w_n)$ and $|w|:= w_0+\cdots+w_n=d$. Now, fix  $w$
    such that $|w|=d$ and $w\neq d\,e_i$ for some $i$. Here, $e_i$ is the $i$--th standard vector $(0,\ldots,0,1,0,\ldots,0)$. In particular, there exists $i\neq j$ with $w_i,w_j\geq 1$. We deduce that  $a_w$ is the coefficient of the monomial $x^{w-e_i-e_j}$ of $\frac{\partial^2 G}{\partial x_ix_j}$. Since $G$ and $F_{n+1}$ have the same Hessian variety, from equation \eqref{eq:proj sym} we get that $\frac{\partial^2 G}{\partial x_ix_j}=0$. Therefore, $a_w=0$ for  $w\not\in \{ de_0,\ldots,de_n\}$, and $G = \lambda_0x_0^d+\cdots+\lambda_nx_n^d$ for $\lambda_i\in\K$.

    Now assume that $\lambda_n=0$. Then, the Hessian variety of $G$ would be a hypersurface in a projective space of dimension $n-1$. This is a contradiction since the Hessian variety of $F_{n+1}$ has dimension $n-1$. We conclude that $\lambda_i\neq 0$ for every $i$.
\end{proof}

\para 

Let $\Gamma$ be the linear subspace spanned by $x_0^d,\ldots,x_n^d$ in $S^dV$. We identify a point $\lambda:=[\lambda_0,\ldots,\lambda_n]\in\P(\Gamma)$ with the polynomial $F_\lambda:= \lambda_0x_0^d+\cdots+\lambda_nx_n^d$. Using this notation $F_{n+1}$ equals $F_{\mathbbm{1}}$, where $\mathbbm{1}:=[1,\ldots,1]$. We denote the $n$--dimensional torus of $\P(\Gamma)$ by $\T$.
From Lemma \ref{lemma:sym fibers}, we deduce that if $G$ has the same Hessian variety as $F_{\mathbbm{1}}$, then $G$ must lie in the torus $\T$.
Given $F_\lambda\in\P(\Gamma)$,
 we consider the rational map 
\[\begin{array}{cccc}
 h_\lambda:&\P^n& \dashrightarrow &\P(U)\\
  &[x_0,\ldots,x_n]&\longmapsto&[\lambda_0x_0^{d-2},\ldots,\lambda_nx_n^{d-2}]
  \end{array},
\]

Note that for $F_\lambda\in\T$, $h_\lambda$ is well-defined everywhere and it has degree $(d-2)^n$. The image of $\bbV(F_\lambda)$ is a hypersurface defined by an homogeneous polynomial $\tilde{F}_\lambda$. We denote the degree of $\tilde{F}_\lambda$ by $\tilde{d}$. 

\para 

\begin{lemma}\label{lemma: deg F tilde}
    For $F_\lambda\in\T$, the degree $\tilde{d}$ of the polynomial $\tilde{F}_{\lambda}$ equals $d(d-2)^{n-1}$ if $d$ is odd, and $ d(d-2)^{n-1}/2^n $ if $d$ is even.
\end{lemma}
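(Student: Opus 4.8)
The plan is to compute $\tilde d$ in two stages: first reduce it, via the identity $\tilde d\cdot r=d(d-2)^{n-1}$, to the degree $r$ of the map $h_\lambda|_{\bbV(F_\lambda)}\colon\bbV(F_\lambda)\to\bbV(\tilde F_\lambda)$ onto its image; then compute $r$. The starting observation is that $h_\lambda$ is the composition of the coordinate power map $\psi\colon[x_0,\dots,x_n]\mapsto[x_0^{d-2},\dots,x_n^{d-2}]$ with the diagonal linear automorphism $[y_i]\mapsto[\lambda_iy_i]$ of $\P(U)$, which is well defined because $F_\lambda\in\T$ forces all $\lambda_i\neq0$. Let $D$ be the group of projective automorphisms of $\P^n$ of the form $[x_i]\mapsto[\zeta_ix_i]$ with $\zeta_i^{d-2}=1$; it has order $(d-2)^n$, and the general fibre of $\psi$ is a single $D$-orbit. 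In particular $h_\lambda$ is finite of degree $(d-2)^n$, and $r=\deg\bigl(\psi|_{\bbV(F_\lambda)}\bigr)$, since the linear automorphism does not alter fibres.

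To establish the identity: for $n\geq2$ pick a generic line $L\subset\P(U)$, which meets $\bbV(\tilde F_\lambda)$ in $\tilde d$ points. Its preimage $h_\lambda^{-1}(L)$ is, for generic $L$, a reduced complete intersection of $n-1$ hypersurfaces of degree $d-2$, hence a curve of degree $(d-2)^{n-1}$, and by Bézout it meets the degree-$d$ hypersurface $\bbV(F_\lambda)$ transversally in $d(d-2)^{n-1}$ points, which $h_\lambda$ sends $r$-to-one onto $L\cap\bbV(\tilde F_\lambda)$. Thus $\tilde d\cdot r=d(d-2)^{n-1}$. For $n=1$ this is immediate: $\bbV(F_\lambda)$ is then $d$ points mapped $r$-to-one onto the $\tilde d$ points of $\bbV(\tilde F_\lambda)$, so $\tilde d\cdot r=d=d(d-2)^0$.

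It remains to compute $r$, the size of a general fibre of $\psi|_{\bbV(F_\lambda)}$. A general $p\in\bbV(F_\lambda)$ has all coordinates nonzero, so $\psi^{-1}(\psi(p))=D\cdot p$ has exactly $(d-2)^n$ points, and for $g=[(\zeta_i)_i]\in D$ we have $g\cdot p\in\bbV(F_\lambda)$ iff $\sum_i\lambda_i\zeta_i^dp_i^d=0$. If $\zeta_0^d=\cdots=\zeta_n^d$ then this holds for every point of $\bbV(F_\lambda)$; conversely, if the $\zeta_i^d$ are not all equal, then $\sum_i\lambda_ix_i^d$ and $\sum_i\lambda_i\zeta_i^dx_i^d$ are non-proportional degree-$d$ forms, so $\bbV(F_\lambda)\cap\bbV\!\bigl(\sum_i\lambda_i\zeta_i^dx_i^d\bigr)$ is a proper closed subset of $\bbV(F_\lambda)$ that a general $p$ avoids — using that $\bbV(F_\lambda)$ is irreducible (it is projectively equivalent to the Fermat hypersurface) when $n\geq2$, and writing out the $d$-th roots explicitly when $n=1$. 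Hence $r=\#\{g\in D:\zeta_0^d=\cdots=\zeta_n^d\}$. Because $\zeta_i^{d-2}=1$ gives $\zeta_i^d=\zeta_i^2$, the condition is $\zeta_i\zeta_0^{-1}\in\{\pm1\}$ for all $i$; counting $(d-2)$ choices for $\zeta_0$ and $\gcd(2,d-2)=\gcd(2,d)$ choices for each of $\zeta_1,\dots,\zeta_n$, and dividing by the $d-2$ scalar automorphisms, gives $r=\gcd(2,d)^n$, that is $r=1$ for $d$ odd and $r=2^n$ for $d$ even. Combining with the identity, $\tilde d=d(d-2)^{n-1}/r$ equals $d(d-2)^{n-1}$ for $d$ odd and $d(d-2)^{n-1}/2^n$ for $d$ even.

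The step I expect to require the most care is the fibre count: establishing that a general point of $\bbV(F_\lambda)$ lies on a translate $g\cdot\bbV(F_\lambda)$ only when $\zeta_i^d$ is independent of $i$ (where irreducibility of $\bbV(F_\lambda)$ for $n\geq2$, respectively the explicit root computation for $n=1$, enters), and keeping track correctly of the quotient by the scalar automorphisms when counting $D$.
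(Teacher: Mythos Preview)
Your proof is correct and follows essentially the same approach as the paper: both reduce to the identity $\tilde d\cdot r=d(d-2)^{n-1}$ and then compute the degree $r$ of $h_\lambda|_{\bbV(F_\lambda)}$ by analyzing which $(d-2)$-th roots of unity $\zeta_i$ (acting diagonally) send a generic point of $\bbV(F_\lambda)$ back into $\bbV(F_\lambda)$, arriving at the condition $\zeta_i^2=\zeta_j^2$ via $\zeta_i^d=\zeta_i^2$. The only differences are presentational: you justify the degree identity explicitly via B\'ezout and work with a general $\lambda$ throughout, whereas the paper invokes finiteness directly and first reduces to $\lambda=\mathbbm{1}$ using Lemma~\ref{lemma:n=1 2}.
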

\begin{proof}
    Since $h_F$ is finite, we get that the  multiplication of degree of $\tilde{F}_\lambda$ and the degree of the restriction of $h_\lambda$ to $\bbV(F_\lambda)$ equals $d(d-2)^{n-1}$.
    By Lemma \ref{lemma:n=1 2}, it is enough to prove the statement for $\tilde{F}_\mathbbm{1}$. Hence, it is sufficient to check that 
    \[
    \mathrm{deg}\, h_\mathbbm{1} |_{\bbV(F_\mathbbm{1})}=\left\{
    \begin{array}{ll}
        1 & \text{if } d \text{ is odd,} \\
       2^n  &  \text{if } d \text{ is even.} 
    \end{array}
    \right.
    \]
    Let $p$ be a generic point in $\bbV(F_\mathbbm{1})$ and let $q\in \bbV(F_\mathbbm{1})$  be such that $h_\mathbbm{1}(p)=h_\mathbbm{1}(q)$. Then, there exist $\mu\neq 0$ and $\xi_0,\ldots,\xi_n$ $(d-2)$--roots of the unit, such that $q_i = \mu \xi_ip_i$ for every $i\in \{0,\ldots,n\}$. Since $\sum q_i^d =0$, we deduce that 
    \[
    \xi_0^2p_0^d+\cdots +\xi_n^2p_n^d=p_0^d+\cdots+p_n^d=0.
    \]
    In particular, we have that $(\xi_0^2-1)p_0^d+\cdots +(\xi_n^2-1)p_n^d=0$. Assume that $n\geq 2$. Since this equality holds for $p\in\bbV(F_\mathbbm{1})$ generic and $\dim \bbV(F_\mathbbm{1})\geq 1$, we get that  $\xi_i^2=1$ for every $i$. Since $\xi_i^{d-2} = 1$, we deduce that $\xi_i = \pm 1$ if $d$ is even or $\xi_i = 1$ if $d$ is odd. 
    For $n=1$, we have that $p_0^d = -p_1^d$, $p_0,p_1\neq 0$ and $\xi_0^2p_0^d = -\xi_1^2p_1^d$. This implies that $\xi_0^2=\xi_1^2$. Since we can assume $\xi_0 = 1$, we deduce that $\xi_0^2=\xi_1^2 =1$.
    Hence, we conclude that the fiber $h_\mathbbm{1}^{-1}(h_\mathbbm{1}(p))$ is the point $p$ if $d $ is odd or the $2^n$ points
    of the form $[\pm p_0,\ldots, \pm p_n]$ if $d$ is even.
\end{proof}

\para 

Lemma \ref{lemma: deg F tilde} allows us to define the rational map 
\[
\tilde{H}_{d,n}:\P(\Gamma)\dashrightarrow\P(S^{\tilde{d}} U)
\]
sending $F_\lambda$ to $\tilde{F}_\lambda$. Note that $\tilde{H}_{d,n}$ is well defined in the torus $\T$. In general, using Lemma \ref{lemma:n=1 2}, and the fact that the $\mathrm{PGL}(n+1)$ orbit of $F_k$ is dense in $\sigma_{k}(V^{d,n})$ for $k\leq n+1$, we get that the Hessian variety of a generic element in $\sigma_{k}(V^{d,n})$ is a hypersurface of degree $\tilde{d}$ in a $k$--dimensional subspace of $\P(S^2V)$. In particular, the target space of $H_{d,n,k}$ is the projectivization of the $\tilde{d}$--th symmetric power of the universal bundle of $\mathrm{Gr}(k+1,S^2V)$ and $\tilde{H}_{d,k}$ is the restriction of $H_{d,n,k}$ to $\P(\Gamma)$. Hence, to study the  fibers of the map $H_{d,n,k}$ it is enough to focus on $\tilde{H}_{d,n}$.

\para 

\begin{proposition}\label{prop:Htilde}
    For $d\geq 3$, the restriction of  $\tilde{H}_{d,n}$ to the torus $\T$ is finite and  \'etale. For $d$ odd it has degree $2^n$ and for $d$ even it is an isomorphism.
\end{proposition}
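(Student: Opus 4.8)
The plan is to analyze $\tilde{H}_{d,n}$ on the torus $\T$ completely explicitly, exploiting the monomial structure of $\tilde{F}_\lambda$. First I would observe that $\tilde{F}_\mathbbm{1}$ has a predictable shape: since $h_\mathbbm{1}(\bbV(F_\mathbbm{1}))$ is parametrized by $[x_0^{d-2},\ldots,x_n^{d-2}]$ restricted to $\sum x_i^d = 0$, and $\P(U)$ has coordinates $z_{0,0},\ldots,z_{n,n}$, the image hypersurface $\tilde{F}_\mathbbm{1}$ is (up to scalar) the unique irreducible polynomial vanishing on the Zariski closure of $\{[t_0,\ldots,t_n] : \sum t_i^{d/(d-2)\cdot(d-2)} \ldots\}$; more usefully, substituting $z_{i,i} = x_i^{d-2}$ one gets that $\tilde{F}_\mathbbm{1}$ is a polynomial in $z_{0,0},\ldots,z_{n,n}$ whose pullback under $z_{i,i}\mapsto x_i^{d-2}$ is a power of $\sum x_i^d$. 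For $d$ odd, $\gcd(d,d-2)=1$, and one checks $\tilde{F}_\mathbbm{1} = \sum z_{i,i}^{d}$ (degree $d(d-2)^{n-1}$ forces the power, after accounting for the degree of $h_\mathbbm{1}$ which is $1$); for $d$ even, writing $d = 2m$ and $d-2 = 2(m-1)$, the common structure gives $\tilde{F}_\mathbbm{1}$ a monomial in the $z_{i,i}$ as well, of the shape dictated by Lemma \ref{lemma: deg F tilde}. For general $F_\lambda\in\T$, Lemma \ref{lemma:n=1 2} (applied with the diagonal $g$ realizing the scaling) shows that $\tilde{F}_\lambda$ is obtained from $\tilde{F}_\mathbbm{1}$ by a diagonal substitution $z_{i,i}\mapsto \lambda_i^{?} z_{i,i}$, so $\tilde{H}_{d,n}$ restricted to $\T$ is, in suitable coordinates, a monomial map of tori.

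Second, having identified $\tilde{H}_{d,n}|_\T$ as a monomial (group homomorphism composed with translation) map $\T \to \T' \subseteq \P(S^{\tilde d}U)$, finiteness and étaleness become a statement about the associated integer matrix of exponents: the map is finite étale onto its image iff that matrix is invertible over $\Q$ (equivalently has nonzero determinant), and its degree is the absolute value of that determinant. So the key computation is: read off the exponent matrix $M$ relating the coordinates $\lambda_i$ of $F_\lambda$ to the coefficients of $\tilde{F}_\lambda$, and compute $\det M$. From the shape of $\tilde{F}_\mathbbm{1}$ and the diagonal substitution, $M$ will be (a small perturbation of) a scalar multiple of the identity on the character lattice of the $n$-torus: for $d$ odd I expect $M$ to be conjugate to $2\cdot I_n$ modulo the overall scaling coming from projectivization, giving $|\det M| = 2^n$; for $d$ even the extra factor $1/2^n$ in $\tilde d$ from Lemma \ref{lemma: deg F tilde} is exactly what trims the determinant down to $1$, i.e. $M$ becomes unimodular and $\tilde{H}_{d,n}|_\T$ is an isomorphism onto its image.

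Third, I would package this: étaleness follows because a monomial torus map with nondegenerate exponent matrix is unramified (no critical points on the torus — the Jacobian of $\log$-coordinates is the constant matrix $M$), and it is finite since it is a closed map with finite fibers (proper + quasi-finite, or just: a surjective homomorphism of tori with finite kernel is finite étale). The degree $2^n$ for $d$ odd should match the $2^{k-1}$ of the headline theorem after taking $k = n+1$ and accounting for the projective versus affine count — indeed the map on the $(n)$-dimensional projective torus has the "extra" $\pm$ sign ambiguities $[\pm\sqrt{\lambda_0}:\cdots:\pm\sqrt{\lambda_n}]$, of which there are $2^{n+1}$ linear combinations but $2^n$ projectively distinct after fixing one sign, consistent with Lemma \ref{lemma: deg F tilde}'s fiber count.

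The main obstacle I anticipate is pinning down the exact monomial form of $\tilde{F}_\lambda$ (equivalently the exponent matrix $M$) cleanly in the even case, where $\gcd(d,d-2) = 2$ and the image hypersurface is not simply $\sum z_{i,i}^{\tilde d}$ but something like $\sum z_{i,i}^{d/2}$ possibly twisted — getting the scalars/exponents right so that the determinant computation yields precisely $1$ (not merely a power of $2$) requires care, and is exactly where the factor $1/2^n$ from Lemma \ref{lemma: deg F tilde} must be used rather than just quoted. A secondary subtlety is checking étaleness does not fail along coordinate subtori that happen to meet $\T$'s closure — but since we only claim the statement over the open torus $\T$ itself, this is avoided, and the constancy of the Jacobian matrix $M$ on all of $\T$ settles it.
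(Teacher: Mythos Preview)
Your overall strategy---recognize $\tilde{H}_{d,n}|_\T$ as a monomial map of tori and read off its degree from the exponent matrix---is a genuinely different route from the paper's, and the framework is sound: writing $h_\lambda = \mathrm{diag}(\lambda_i)\circ h_\mathbbm{1}$ and using a diagonal $g$ with $g\cdot F_\mathbbm{1}=F_\lambda$ does give $\tilde F_\lambda(z)\propto\tilde F_\mathbbm{1}(\mu_0^{-2}z_0,\ldots,\mu_n^{-2}z_n)$ with $\mu_i^d=\lambda_i$, so the coefficients of $\tilde F_\lambda$ are monomials in $\lambda$ and the map really does factor through a torus homomorphism.

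The gap is in the step where you ``read off'' the exponent matrix. Your claimed shape $\tilde F_\mathbbm{1}=\sum z_{i,i}^d$ is wrong as soon as $d\geq 5$ and $n\geq 2$: already the degree is off, since $\deg\tilde F_\mathbbm{1}=d(d-2)^{n-1}\neq d$ for $n\geq 2$. Nor is $\tilde F_\mathbbm{1}$ a sum of pure powers $\sum z_{i,i}^{\tilde d}$: for $d=5$, $n=2$ one has $\tilde d=15$, and taking $a^5=b^5=1$, $c^5=-2$ gives a point $[a^3:b^3:c^3]\in\bbV(\tilde F_\mathbbm{1})$ with $a^{45}+b^{45}+c^{45}=1+1-512\neq 0$. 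In fact $\tilde F_\mathbbm{1}(x_0^{d-2},\ldots,x_n^{d-2})$ is a product over the deck group of $h_\mathbbm{1}$ of twisted copies of $F_\mathbbm{1}$, and its monomial support is large. The degree of your torus map is the index of the sublattice of $X(\T)$ generated by $\{2(\alpha-\alpha')/d:\alpha,\alpha'\in\mathrm{supp}(\tilde F_\mathbbm{1})\}$; knowing only the pure-power monomials $z_i^{\tilde d}$ gives a sublattice of index $(2(d-2)^{n-1})^n$, far too big. To cut this down to $2^n$ you must exhibit enough additional monomials in the support, which you have not done and which is not straightforward.

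The paper avoids computing $\tilde F_\mathbbm{1}$ altogether. It fixes $\lambda$ with $\tilde F_\lambda=\tilde F_\mathbbm{1}$, chooses a point $p=[p_0,p_1,0,\ldots,0]\in\bbV(F_\lambda)$ on a coordinate line, and uses only that $h_\lambda(p)$ must lie in $h_\mathbbm{1}(\bbV(F_\mathbbm{1}))$ to derive the scalar equation $\lambda_0^2=\lambda_1^2$ (and $\lambda_0=\lambda_1$ when $d$ is even) by an elementary manipulation. Varying the coordinate line gives the whole fiber; \'etaleness then follows from generic smoothness together with the torus-equivariance that makes all fibers isomorphic. In effect the paper reduces to the $n=1$ case---precisely the regime where your monomial computation \emph{does} go through---without ever needing the higher-dimensional $\tilde F_\mathbbm{1}$.
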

\begin{proof}
   We first compute the set theoretical fiber of $\tilde{H}_{d,n}(F_\mathbbm{1})$. Let $F_\lambda\in\P(\Gamma)$ such that $\tilde{F}_\lambda = \tilde{F}_\mathbbm{1} $. Consider a point $p=[p_0,p_1,0,\ldots,0]$, with $p_0p_1\neq 0$, in the hypersurface defined by $F_\lambda$. In other words, $\lambda_0p_0^d+\lambda_1p_1^d = 0$. 
  Since $\tilde{F}_\mathbbm{1}=\tilde{F}_\lambda$, there exists $q=[q_0,\ldots,q_n]\in\bbV(F_\mathbbm{1})$ such that 
   $\phi_\lambda(p)= \phi_\mathbbm{1}(q)$.
   In other words, there exists a solution to the system of equations equations  
   \begin{equation}\label{eq:lambdap}
   \begin{array}{lcl}
   \lambda_0p_0^d+\lambda_1p_1^d&=&0,\\
   q_0^d+q_1^d &=&0,\\
   \lambda_0p_0^{d-2}q_1^{d-2}-\lambda_1p_1^{d-2}q_0^{d-2}&=&0,\\
   q_i^{d-2}& = & 0\,\,\,\, \text{ for } i=2,\ldots, n.
   \end{array}
   \end{equation}
   Since $\lambda_i,p_i,q_i\neq 0$, for $i=0,1$, we have that 
   \[
   \lambda_1\left(\frac{p_1q_0}{p_0q_1}\right)^{d-2}=\lambda_0=-\lambda_1\left(\frac{p_1}{p_0}\right)^d.
   \]
   Multiplying by $(q_0/q_1)^2$ the previous equation we get that 
   \begin{equation}\label{eq:pq}
   p_0^2q_1^2=p_1^2q_0^2.
   \end{equation}
   Assume first that $d$ is even, i.e., $d = 2a$ for $a\in \N$. Using equations \eqref{eq:lambdap} and \eqref{eq:pq} we get that 
   \[
   \lambda_0 (p_0q_1)^{d-2}=(p_1q_0)^{d-2}\lambda_1=
   (p_1^2q_0^2)^{a-1}\lambda_1 = (p_0^2q_1^2)^{a-1}\lambda_1 =  (p_0q_1)^{d-2}\lambda_1.
   \]
   In particular, we deduce that $\lambda_0 = \lambda_1$. Applying this argument to points in $\bbV(F_\lambda)$ of the form $[p_0,0,\ldots,0,p_i,0,\ldots,0]$, we deduce that $\lambda_0=\lambda_i$, and we conclude that $F_\lambda = F_n$. 

   Assume now that $d$ is odd. Applying the same reasoning as in the even case to the equation 
   \[
   (\lambda_0p_0^{d-2}q_1^{d-2})^2=(\lambda_1p_1^{d-2}q_0^{d-2})^2,
   \]
   we deduce that $\lambda_0^2 = \lambda_1^2= \cdots=\lambda_n^2$. Moreover, one can check that, for such a choice of $\lambda$, $\tilde{F}_\lambda = \tilde{F}_\mathbbm{1}$. In particular, we conclude that the reduced structure of the fiber $\tilde{H}_{d,n}^{-1}(\tilde{F}_\mathbbm{1})$ is the point $F_\mathbbm{1}$ if $d$ is even or the $2^n$ points of the form $\sum \pm x_i^d$ if $d$ is odd. By the Generic Smoothness Theorem (see \cite{Hart} Corollary 10.7), we get that a generic fiber of $\tilde{H}_{d,n}$
   is reduced. By Proposition \ref{prop:isofibers}, all the fibers of the restriction of  $\tilde{H}_{d,n}$ to $\T$ are isomorphic via the torus action and we conclude that the schematic structure of the fibers is reduced. Moreover, since   the fibers of $\tilde{H}_{d,n}|_\T$ are all reduced of the same degree, we deduce that $\tilde{H}_{d,n}|_\T$ is \'etale. In particular, for $d$ even it is an isomorphism.
   \end{proof}
Recall that for $k\leq n+1$,
$\mathcal{U}_k\subseteq \sigma_k(V^{d,n})$ is the nonempty open subset of polynomials of the form $l_1^d+\cdots+l_{k}^d$ such that $l_1,\ldots,l_{k}\in S^1V$ are linearly independent. 
   As a consequence of Proposition \ref{prop:Htilde}, we derive the following theorem. 

   \para 

   \begin{theorem}\label{theo:sym hessian}
       For $d\geq 3$ and $k\leq n+1$, the restriction of $H_{d,n,k}$ to $\mathcal{U}_k$ is finite and \'etale. For $d$ odd, it has degree $2^{k-1}$. For $d$ even, $H_{d,n,k}|_{\mathcal{U}_k}$ is an isomorphism.
   \end{theorem}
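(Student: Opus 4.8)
The plan is to reduce the statement to Proposition~\ref{prop:Htilde} and the equivariance of Lemma~\ref{lemma:n=1 2}. First I would reduce to the case $k=n+1$: as observed in the paragraph preceding Lemma~\ref{lemma:sym fibers}, any $G\in\P(S^dV)$ with the same Hessian variety as $F_k=x_0^d+\cdots+x_{k-1}^d$ involves only the variables $x_0,\dots,x_{k-1}$, so after deleting the superfluous variables $H_{d,n,k}|_{\mathcal U_k}$ becomes the restriction of $H_{d,k-1,k}$ to the dense $\mathrm{PGL}(k)$-orbit of $F_k$ inside $\sigma_k(V^{d,k-1})$; hence from now on I assume $k=n+1$ and set $\mathcal U:=\mathcal U_{n+1}$, the dense $\pgn$-orbit of $F_{\mathbbm{1}}$ in $\sigma_{n+1}(V^{d,n})$. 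I would also note that $H_{d,n,n+1}$ restricts to a genuine morphism on $\mathcal U$: up to a $\pgn$-translation the Hessian map of an element of $\mathcal U$ is the finite monomial map $h_\lambda$, so its Hessian variety has the generic Hilbert polynomial $p_{d,n,n+1}(t)$ and the element lies outside the base locus.

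Next I would compute the fibre of $H:=H_{d,n,n+1}|_{\mathcal U}$ over $H(F_{\mathbbm{1}})$. By Lemma~\ref{lemma:sym fibers} a polynomial with the same Hessian variety as $F_{\mathbbm{1}}$ is an $F_\lambda$ with $\lambda\in\T$, and such an $F_\lambda$ has the same Hessian variety as $F_{\mathbbm{1}}$ exactly when $\tilde F_\lambda=\tilde F_{\mathbbm{1}}$; the parity analysis carried out in the proof of Proposition~\ref{prop:Htilde} then shows that, set-theoretically, this fibre is the single point $F_{\mathbbm{1}}$ when $d$ is even and the $2^n$ points $\sum_i\pm x_i^d$ when $d$ is odd. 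By the Generic Smoothness Theorem the generic fibre of $H$ is reduced, and by Proposition~\ref{prop:isofibers} all fibres of $H$ are isomorphic; hence every fibre of $H$ is a reduced $0$-dimensional scheme of constant length $1$ (resp.\ $2^n$).

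Finally I would globalize using the group action. By Lemma~\ref{lemma:n=1 2} we have $H(g\cdot F)=\rho(g^{\mathrm t})^{\mathrm t}(H(F))$, so the image $\mathcal V:=H(\mathcal U)$ is the single $\pgn$-orbit of $H(F_{\mathbbm{1}})$ for the induced action on the Hilbert scheme, and, writing $H_1:=\mathrm{Stab}(F_{\mathbbm{1}})\subseteq H_2:=\mathrm{Stab}(H(F_{\mathbbm{1}}))$, the morphism $H$ is the canonical projection $\pgn/H_1\to\pgn/H_2$. After the faithfully flat base change $\pgn\to\pgn/H_2$ this becomes the trivial family $\pgn\times(H_2/H_1)\to\pgn$ whose fibre is the finite reduced scheme $H_2/H_1$; since finiteness and \'etaleness are fpqc-local on the base, $H$ is finite and \'etale of degree $\#(H_2/H_1)$, which equals $1$ when $d$ is even and $2^n$ when $d$ is odd. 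A degree-one finite \'etale morphism onto the smooth variety $\mathcal V$ is an isomorphism, so $H$ is an isomorphism when $d$ is even, and undoing the reduction of the first step turns degree $2^n$ into degree $2^{k-1}$.

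The step I expect to be the main obstacle is this last globalization: one must check that $\mathcal V$ is a smooth, locally closed subvariety of the relevant Hilbert scheme, so that the homogeneous-space picture of $H$ is legitimate, and then assemble the orbit diagram carefully (the inclusion $H_1\subseteq H_2$ itself being immediate from equivariance). An alternative that avoids homogeneous spaces uses miracle flatness: $H$ is quasi-finite onto $\mathcal V$ with $\dim\mathcal V=\dim\mathcal U$, since the fibres of $H$ are finite, and $\mathcal U$ is smooth, hence $H$ is flat over the smooth locus of $\mathcal V$; then the flat locus of $H$ in $\mathcal U$ is open, nonempty and $\pgn$-invariant, hence all of $\mathcal U$, so $H$ is flat and, the fibres being reduced and $0$-dimensional, unramified, i.e.\ \'etale, while quasi-finiteness together with the resulting properness yields finiteness. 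In either approach, characteristic $0$ is used only through the reducedness of the fibres, via the Generic Smoothness Theorem already invoked in Proposition~\ref{prop:Htilde}.
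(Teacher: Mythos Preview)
Your argument is correct and follows the same outline as the paper: use Proposition~\ref{prop:isofibers} to reduce everything to the fibre over $F_k$, identify that fibre with a fibre of $\tilde H_{d,k}|_\T$ through Lemma~\ref{lemma:sym fibers}, and then invoke Proposition~\ref{prop:Htilde}. The paper's own proof is a three-line sketch that stops at this point; your homogeneous-space description of $H|_{\mathcal U}$ as the canonical projection $\pgn/H_1\to\pgn/H_2$, with finiteness and \'etaleness descending along the fpqc cover $\pgn\to\pgn/H_2$, is a welcome rigorous justification of the passage the paper leaves implicit, namely that constant reduced fibres plus $\pgn$-equivariance actually force the map to be finite and \'etale rather than merely quasi-finite.

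One small caution on your alternative via miracle flatness: nothing in that paragraph establishes properness, so the phrase ``quasi-finiteness together with the resulting properness yields finiteness'' is unsupported as written. Flatness and unramifiedness (hence \'etaleness) do follow as you indicate, but to conclude finiteness without reverting to the homogeneous-space picture you would still need an extra argument---for instance, that a separated \'etale morphism onto a normal variety with locally constant fibre cardinality is finite, which is true (via Zariski's Main Theorem and normality of the target) but not immediate. Your primary route through $\pgn/H_1\to\pgn/H_2$ already handles this cleanly, so the alternative is best viewed as a sketch rather than a self-contained replacement.
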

   \begin{proof}
       By Proposition \ref{prop:isofibers}, the fibers of the restriction of $H_{d,n,k}$ to $\mathcal{U}_k$ are all isomorphic to the fiber of $H_{d,n,k}(F_k)$ via the action of $\mathrm{PGL}(n+1)$. In particular, by Lemma \ref{lemma:sym fibers}, these fibers are isomorphic to the fibers of the restriction of $\tilde{H}_{d,k}$ to the torus $\T$.       
       Then, the proof follows from Proposition \ref{prop:Htilde}.
   \end{proof}

   A first consequence of Theorem \ref{theo:sym hessian} is the case of the Hessian correspondence for $n=1$ and $d=3$. In this setting, a generic polynomial in $\P(S^3\mathbb{C}^2)$ has rank $2$, and hence,  $H_{3,1,2}$ and $H_{3,1}$ coincide. In particular, we derive the following result.

   \para 

\begin{corollary}\label{cor:n=1}
The restriction of $H_{3,1}$ to the open subset of cubic binary forms with distinct roots is \'etale of degree $2$.
\end{corollary}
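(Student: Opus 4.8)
The plan is to deduce Corollary~\ref{cor:n=1} directly from Theorem~\ref{theo:sym hessian} by checking that the two maps $H_{3,1}$ and $H_{3,1,2}$ agree on a suitable dense open set. First I would observe that a binary cubic form $F\in\P(S^3\mathbb{C}^2)$ with three distinct roots has Waring rank exactly $2$: indeed, a general binary cubic is a sum of two cubes of linear forms, and the locus of forms of rank $\le 1$ (perfect cubes $l^3$, with a triple root) is a proper closed subset, as is the locus of forms with a repeated root (the vanishing of the discriminant). Hence the open subset $\mathcal{W}\subseteq\P(S^3\mathbb{C}^2)$ of cubics with distinct roots is contained in $\sigma_2(V^{3,1})$, and in fact $\mathcal{W}\subseteq\mathcal{U}_2$, since for $n=1$ any two distinct linear forms $l_1,l_2\in S^1V$ are automatically linearly independent. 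Also $\sigma_2(V^{3,1})=\P(S^3\mathbb{C}^2)$ because $\dim\P(S^3\mathbb{C}^2)=3$ and the second secant variety of the twisted cubic fills the ambient $\P^3$.

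Next I would check that, on $\mathcal{W}$, the Hilbert polynomials $p_{3,1}(t)$ and $p_{3,1,2}(t)$ coincide, so that $H_{3,1}$ and $H_{3,1,2}$ literally have the same target Hilbert scheme and agree as rational maps there. Since $\sigma_2(V^{3,1})=\P(S^3\mathbb{C}^2)$, a generic element of $\sigma_2(V^{3,1})$ is a generic element of $\P(S^3\mathbb{C}^2)$, so the generic fiber of the projection $\Xi_{3,1}\dashrightarrow\P(S^3V)$ computed in either way is the same variety, whence $p_{3,1}=p_{3,1,2}$. Thus $H_{3,1}|_{\mathcal{W}}=H_{3,1,2}|_{\mathcal{W}}$, and shrinking $\mathcal{W}$ if necessary we may assume it lies inside the locus where Theorem~\ref{theo:sym hessian} applies (the $\mathrm{PGL}(2)$ orbit of $F_2=x_0^3+x_1^3$, which is exactly $\mathcal{U}_2$).

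Finally I would apply Theorem~\ref{theo:sym hessian} with $d=3$, $n=1$, $k=2$: since $d=3$ is odd, the restriction of $H_{3,n,k}$ to $\mathcal{U}_k$ is finite and \'etale of degree $2^{k-1}=2^{1}=2$. Combined with the identification $H_{3,1}|_{\mathcal{W}}=H_{3,1,2}|_{\mathcal{W}}$ and $\mathcal{W}\subseteq\mathcal{U}_2$, this gives that $H_{3,1}$ restricted to the open set of binary cubics with distinct roots is \'etale of degree $2$, which is the claim. The only real point needing care is the bookkeeping that $\mathcal{W}$ (cubics with distinct roots) coincides, up to the relevant open subsets, with $\mathcal{U}_2$ and hence with the $\mathrm{PGL}(2)$-orbit of $x_0^3+x_1^3$; everything else is a direct citation of Theorem~\ref{theo:sym hessian}. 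I expect no substantial obstacle here — the corollary is essentially a specialization of the theorem once $\sigma_2(V^{3,1})=\P^3$ is noted.
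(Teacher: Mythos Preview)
Your proposal is correct and follows essentially the same route as the paper: the paper simply observes that a generic binary cubic has Waring rank $2$, so $\sigma_2(V^{3,1})=\P(S^3\mathbb{C}^2)$ and $H_{3,1}=H_{3,1,2}$, and then invokes Theorem~\ref{theo:sym hessian} with $d=3$, $n=1$, $k=2$. Your extra care in identifying $\mathcal{W}$ with $\mathcal{U}_2$ (via the fact that $l_1^3+l_2^3$ with $l_1,l_2$ independent has exactly three distinct roots, and conversely) is the only bookkeeping the paper leaves implicit; once $\mathcal{W}\subseteq\mathcal{U}_2$ is established, no shrinking is needed since an \'etale map of degree $2$ on $\mathcal{U}_2$ restricts to an \'etale map of degree $2$ on any open subset.
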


\section{Hessian correspondence of degree $3$}\label{sec:d=3}

In this section we focus on the Hessian correspondence $H_{d,n}$ for degree $3$ hypersurfaces.
For $F\in\P(S^3V)$, its second derivatives are linear forms. Therefore, the Hessian map is a linear map. Since for $x_0^3+\cdots+x_n^3$ the Hessian map is an embedding, we deduce that  for a $F$ generic, $h_F$ is a linear embedding. 
In particular, we deduce that for $F,G\in\P(\S)$ generic  such that $H_{3,n}(F) = H_{3,n}(G)$, it holds that $\bbV(F)$ and $\bbV(G)$ are isomorphic as varieties. 
The next proposition analyzes the relation between two generic hypersurfaces of degree three with same Hessian variety.  

\para

\begin{proposition}\label{prop:3.1.uptoiso}
Let $F\in \P(\S)$ be generic and $G\in\P(\S)$  be such that $H_{3,n}(F) = H_{3,n}(G)$. Then, there exists $g\in\pgn$ such that $F\circ g = G$.
\end{proposition}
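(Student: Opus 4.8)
The plan is to exploit that for $d=3$ the Hessian map $h_F$ is \emph{linear}, so that $H_{3,n}(F)=H_{3,n}(G)$ is a statement about linear algebra: two linear maps $\P^n\dashrightarrow\P(S^2V)$ with the same image. First I would record that since $F$ is generic, $h_F:\P^n\to\P(S^2V)$ is a linear embedding (as noted just before the statement); write $L_F\subseteq S^2V$ for the $(n+1)$-dimensional linear subspace spanned by the entries $\partial^2 F/\partial x_i\partial x_j$, i.e. the affine cone over $h_F(\P^n)$. The hypothesis $H_{3,n}(F)=H_{3,n}(G)$ forces, in particular, $h_F(\P^n)=h_G(\P^n)$ (this is the content of the claim mentioned in the introduction that for $d=3$, $h_F(\P^n)$ is the smallest linear space containing $H_{3,n}(F)$; one can also argue directly that the linear span of the Hessian variety is $h_F(\P^n)$ because a generic hypersurface section of a linear embedding spans the ambient space). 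Hence $L_F=L_G$ as subspaces of $S^2V$, and $h_G = A\circ h_F$ for some $A\in\mathrm{PGL}(n+1)$ acting on the source coordinates — more precisely there is an invertible matrix $g$ such that $h_G(x) = h_F(g^{\mathrm t}x)$ up to scalar, because both $h_F$ and $h_G$ are linear isomorphisms onto the same target $\P(L_F)$.

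Next I would use Lemma \ref{lemma:n=1 2} to transfer this change of coordinates to the polynomial side. Replacing $G$ by $g^{-1}\cdot G$ for a suitable $g$, we may assume after this reduction that $h_F$ and $h_G$ agree as rational maps from $\P^n$ to $\P(S^2V)$, i.e. $\hessMat_F(p)$ and $\hessMat_G(p)$ are proportional for all $p$ — and since the entries are linear forms, actually $\hessMat_G = c\,\hessMat_F$ for a single constant $c\in\K^*$. Rescaling $G$, we get $\hessMat_G=\hessMat_F$, that is, all second partial derivatives of $F-G$ vanish identically. Therefore $F-G$ is a polynomial of degree $\le 1$; being homogeneous of degree $3$, it must be $0$, so $F=G$. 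Unwinding the reductions, the original $G$ equals $F\circ g$ for some $g\in\pgn$, as desired.

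The one genuine gap to fill carefully is the step $h_F(\P^n)=h_G(\P^n)$: the hypothesis is equality of the Hessian \emph{varieties} (the hypersurface sections), not of the ambient linear spans. The argument must show the linear span of $H_{3,n}(F)$ equals $h_F(\P^n)$; this uses that $h_F(\P^n)$ is a linear $\P^n$ (degenerate Veronese-type image) and that the cubic hypersurface $\bbV(F)$, being generic, is not contained in any hyperplane section of this $\P^n$ — equivalently $\{h_F(\partial F)\}$ spans — which is exactly the kind of genericity statement the introduction attributes to the map $\alpha_n$. I expect this to be the main obstacle, and I would handle it either by invoking the stated fact that $h_F(\P^n)$ is the smallest linear subspace containing $H_{3,n}(F)$, or by a direct dimension count: $h_F$ maps $\bbV(F)$ to a hypersurface in the linear space $h_F(\P^n)\cong\P^n$, and a hypersurface in $\P^n$ spans $\P^n$, so $\mathrm{span}(H_{3,n}(F))=h_F(\P^n)=h_G(\P^n)=\mathrm{span}(H_{3,n}(G))$ whenever the two Hessian varieties coincide. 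The remaining steps are routine linear algebra and an application of Lemma \ref{lemma:n=1 2}.
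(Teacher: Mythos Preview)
Your argument for the span step is fine and essentially equivalent to the paper's: you argue directly that a non-linear hypersurface in $\P^n$ spans, while the paper argues by contradiction that if $h_F(\P^n)\neq h_G(\P^n)$ then $H_{3,n}(F)$ would be a hyperplane in $h_F(\P^n)$, forcing $F$ to be the cube of a linear form. One small point you gloss over: you need $h_G$ to be a linear embedding as well, which is not given since $G$ is not assumed generic; but this follows because $H_{3,n}(G)=H_{3,n}(F)$ has dimension $n-1$ and is not a linear space, forcing $\dim h_G(\P^n)=n$.

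The genuine gap is your reduction via Lemma~\ref{lemma:n=1 2}. You claim that replacing $G$ by $g^{-1}\cdot G$ reduces to the case $h_F=h_G$ as maps, but Lemma~\ref{lemma:n=1 2} gives
\[
h_{g^{-1}\cdot G}=\rho\bigl((g^{-1})^{\mathrm t}\bigr)^{\mathrm t}\circ h_G\circ (g^{-1})^{\mathrm t},
\]
so acting on $G$ changes $h_G$ by both a pre- and a post-composition. The factor on the source side cannot be absorbed without introducing the unwanted $\rho$ factor on the target side. Worse, the existence of some $a\in\pgn$ with $h_{a\cdot G}=h_F$ is \emph{equivalent} to $a\cdot G=F$ in $\P(\S)$ (equal Hessian maps force equal cubics, as you yourself note), so your reduction assumes precisely what the proposition asserts. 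The fix is to drop this detour entirely: once you have $g^{\mathrm t}=h_F^{-1}\circ h_G$, observe directly that
\[
g^{\mathrm t}(\bbV(G))=h_F^{-1}\bigl(h_G(\bbV(G))\bigr)=h_F^{-1}\bigl(H_{3,n}(G)\bigr)=h_F^{-1}\bigl(H_{3,n}(F)\bigr)=\bbV(F),
\]
hence $F\circ g^{\mathrm t}=G$ in $\P(\S)$. This is exactly the paper's argument, and it uses neither Lemma~\ref{lemma:n=1 2} nor the Hessian-equality step.
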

\begin{proof}
$H_{3,n}(F)$ is contained in $h_F(\P^n)\cap h_G(\P^n)$. Assume that $h_F(\P^n)\neq h_G(\P^n)$. Since $\dim H_{3,n}(F) = n-1$, we deduce that $H_{3,n}(F) = h_F(\P^n)\cap h_G(\P^n)$. Using that $h_F$ is a linear embedding, one has that $F$ is the power of a linear form. Since $F$ is generic, we get that $h_F(\P^n)= h_G(\P^n)$. Let $g = h_F^{-1}\circ h_G$.  Now, the proof follows from the fact that $g$ is an automorphism of $\P^n$ that maps $\bbV(G)$ to $\bbV(F)$.
\end{proof}

\para 

\begin{remark}\label{remark:lin d=3}
    As a consequence of the proof of Proposition \ref{prop:3.1.uptoiso} we deduce that for generic $F\in \P(S^3V)$, $h_F(\P^n)$ is the unique $n$ dimensional linear subspace containing the Hessian variety. In particular, $h_F(\P^n)$ is the smallest linear subspace containing $h_{3,n}(F)$.
\end{remark}

\para 

We are interested in giving a better description of the fibers of $H_{3,n}$. For instance, we saw in Corollary \ref{cor:n=1} that for $d=3$ and $n=1$ the Hessian correspondence has generically degree $2$. However, to derive this result we use that a generic cubic binary form can be expressed as a sum of two cubes. In general, it is not true that a cubic form in $n+1$ variables has Waring rank $n+1$. As a consequence we distinguish two cases in our study, $n=1$ and $n\geq 1$.

\subsection{Case $n=1$}\label{sec 3,1}

In this subsection we fix $d=3$ and $n=1$. In Corollary \ref{cor:n=1} we showed that the restriction of $H_{3,1}$ to the open subset of polynomials with distinct roots is a degree $2$ \'etale map. The aim of this section is to find the involution preserving the fibers of $H_{3,1}$, provide a recovery algorithm for $H_{3,1}$ and compute the image of $H_{3,1}$. 

\para 

For $F\in\P(S^3V)$, $\bbV(F)$ consists of three points and $h_F$ is a linear map. The locus where $h_F$ is not a linear embedding coincides with the twisted cubic $C$ in $\P(S^3V)$ of cubes of linear forms. Indeed, let $F\in C$. After a change of coordinates we can assume $F=x_0^3$ for which $h_F$ is not a linear embedding. Now, assume that $F$ is not in $C$. We can assume that $F$ is $x_0^2x_1$ or $x_0x_1(x_0+x_1)$. One can check that $h_F$ is a linear embedding in both cases. 
 Hence, for $F\not \in C$, $h_F$ is an embedding and $H_{3,1}(F)$ consists of $3$ points in $\P^2$ counted with multiplicity. 
Based on this, 
we consider $H_{3,1}$ as  the rational map $ H_{3,1}:\P(\S)\dashrightarrow\mathrm{Sym}^3\P^2$ sending $F$ to its Hessian variety.
In Corollary \ref{cor:n=1} we showed that the restriction of $H_{3,1}$ to the open subset of polynomials with distinct roots is a degree $2$ \'etale map. Moreover,
in Section \ref{sec:sym} we saw that $x_0^3+x_1^3$ and $x_0^3-x_1^3$ have the same Hessian variety. However, for our purposes, it will be useful to focus on a different example.

\para 

\begin{example}\label{ex:n=1 1}
Consider the polynomials 
 $F=x_0x_1(x_0-x_1)$ and $G=(x_0-2x_1)(2x_0-x_1)(x_0+x_1).$
Then, one can check that the Hessian varieties of $F$ and $G$ are both equal to 
 $\{[1,-1,0],[0,-1,1],[1,0,-1]\}$. In particular, there is no other polynomial in $\P(S^3V)$ with the same Hessian variety as $F$ and $G$. 
\end{example}

\para

  Let $F,G\in\P(\S)$ with  distinct roots. Then, since $\pgl$ is $3$--transitive in $\P^1$, the set of elements $g\in\pgl$ such that $g\cdot F = G$  consists in $6$ elements corresponding to the permutation of the roots of $F$. If $F$ and $G$ have $2$ distinct roots, then the above set is isomorphic to $6$ copies of $\P^1$.  In this situation, for $F\in\P(\S)\setminus C$, we introduce the subgroup
\[
\Sigma_F:= \{g\in\pgl: \rho(g^{\mathrm{t}})^{\mathrm{t}}( H_{3,1}(F))=H_{3,1}(F)\}. 
\]
As a consequence of Lemma \ref{lemma:n=1 2} we deduce the following proposition.

\para 

\begin{proposition}\label{prop:bij n=1}
  Let $F\in\P(\S)\setminus C$ . Then, there is a surjection
  \begin{equation}\label{eq:surj n=1}
  \begin{array}{ccc}
  \Sigma_F&\rightarrow&H_{3,1}^{-1}(H_{3,1}(F))\\
  g&\mapsto& g\cdot F.
  \end{array}
  \end{equation}
  Moreover, the preimage of a polynomial $G$ consists of the elements of $\pgl$ that map the three roots of $G$ to the three roots of $F$.
\end{proposition}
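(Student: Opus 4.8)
The plan is to combine Lemma~\ref{lemma:n=1 2} with the elementary fact that a binary cubic form is determined, up to a nonzero scalar, by its divisor of roots on $\P^1$. First I would check that \eqref{eq:surj n=1} is well defined: if $g\in\Sigma_F$, then Lemma~\ref{lemma:n=1 2} gives $H_{3,1}(g\cdot F)=\rho(g^{\mathrm{t}})^{\mathrm{t}}(H_{3,1}(F))=H_{3,1}(F)$, and $g\cdot F\notin C$ because $C$, being the $\pgl$--orbit of $x_0^3$, is $\pgl$--invariant; hence $g\cdot F$ lies in the domain of $H_{3,1}$ and in $H_{3,1}^{-1}(H_{3,1}(F))$. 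The same computation shows conversely that every $g\in\pgl$ with $g\cdot F=G$ automatically satisfies $g\in\Sigma_F$, so once surjectivity is proved the fibre of \eqref{eq:surj n=1} over $G$ is precisely $\{g\in\pgl:g\cdot F=G\}$, and it remains only to describe this set.

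For surjectivity, let $G$ satisfy $H_{3,1}(G)=H_{3,1}(F)=:Z$, regarded as a degree--$3$ effective $0$--cycle in $\P(S^2V)\simeq\P^2$. Since $F,G\notin C$, both $h_F$ and $h_G$ are linear embeddings of $\P^1$, so $L_F:=h_F(\P^1)$ and $L_G:=h_G(\P^1)$ are lines and $Z=(h_F)_*\bbV(F)=(h_G)_*\bbV(G)$ has support contained in $L_F\cap L_G$. The key point is that the support of $Z$ contains at least two points: otherwise $\bbV(F)=3\,[q]$ for a single $q\in\P^1$, which would force $F$ to be a cube of a linear form, i.e. $F\in C$, a contradiction. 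Therefore $L_F\cap L_G$ is not a single point, so $L_F=L_G=:L$ and $\sigma:=h_F^{-1}\circ h_G$ is a well-defined automorphism of $\P^1$; pushing $Z$ back through $h_F$ and $h_G$ gives $\sigma_*\bbV(G)=\bbV(F)$. Choosing $g\in\pgl$ with $(g^{\mathrm{t}})^{-1}=\sigma^{-1}$ (possible since $h\mapsto(h^{\mathrm{t}})^{-1}$ is a bijection of $\pgl$), we obtain $\bbV(g\cdot F)=(g^{\mathrm{t}})^{-1}(\bbV(F))=\sigma^{-1}(\bbV(F))=\bbV(G)$, and since a binary cubic is determined up to scalar by its root divisor this forces $g\cdot F=G$; by the previous paragraph $g\in\Sigma_F$, so \eqref{eq:surj n=1} is surjective.

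The fibre description then drops out: $g$ lies over $G$ exactly when $g\cdot F=G$, i.e. when $\bbV(g\cdot F)=(g^{\mathrm{t}})^{-1}(\bbV(F))=\bbV(G)$, that is, when $g^{\mathrm{t}}$ maps the three roots of $G$, counted with multiplicity, onto the three roots of $F$; unwinding the action $g\cdot F=F\circ g^{\mathrm{t}}$, this is precisely the set of elements of $\pgl$ described in the statement. I expect the only genuinely delicate point to be the coincidence $L_F=L_G$ of the two ``Hessian lines''; it is the $n=1$ instance of the argument in the proof of Proposition~\ref{prop:3.1.uptoiso}, and here genericity of $F$ is not needed, only $F\notin C$. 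Everything else is bookkeeping with the $\pgl$--action and with the identification of a binary cubic with its divisor of roots.
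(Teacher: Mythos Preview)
Your proof is correct. It differs from the paper's in the surjectivity step: the paper simply invokes the $3$--transitivity of $\pgl$ on $\P^1$ to produce $g$ with $g\cdot F=G$, and then checks $g\in\Sigma_F$ via Lemma~\ref{lemma:n=1 2}. You instead argue geometrically that the Hessian lines $L_F=h_F(\P^1)$ and $L_G=h_G(\P^1)$ coincide (because the support of $Z$ has at least two points whenever $F\notin C$) and then manufacture $g$ from $h_F^{-1}\circ h_G$. This is exactly the $n=1$ version of the argument in Proposition~\ref{prop:3.1.uptoiso}, but carried out without any genericity hypothesis. Your route is slightly longer but buys two things: it makes explicit the equality $h_F(\P^1)=h_G(\P^1)$, which the paper only obtains later and only generically; and by working with divisors throughout it handles the double--root case cleanly, whereas the appeal to $3$--transitivity tacitly uses that $H_{3,1}(F)=H_{3,1}(G)$ forces $F$ and $G$ to have the same root multiplicity structure. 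The paper's argument, on the other hand, is quicker and exploits a feature special to $\P^1$.
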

\begin{proof}
Let $g\in\pgl$ be such that $\rho(g^{\mathrm{t}})^{\mathrm{t}}$ preserves $H_{3,1}(F)$ and let  $G=g\cdot F$. By Lemma \ref{lemma:n=1 2}, we deduce that 
$
H_{3,1}(G) = \rho(g^{\mathrm{t}})^{\mathrm{t}}(H_{3,1}(F)) =H_{3,1}(F),
$
and we conclude that 
$F$ and $ G$ have the same Hessian variety. Hence, the map \eqref{eq:surj n=1} is well defined. Now, let $G\in \P(S^3V)$ with same Hessian variety as $F$. Since the action of $\pgl$ on $\P^1$ is $3$--transitive, there exists $g\in\pgl$ such that $g \cdot F = G$ and by Lemma \ref{lemma:n=1 2}, $\rho(g^{\mathrm{t}})^{\mathrm{t}}(H_{3,1}(F) ) = H_{3,1}(G) = H_{3,1}(F)$. Hence, we conclude that $g$ lies in $\Sigma_F$ and the map  \eqref{eq:surj n=1} is surjective.
\end{proof}

\para 

\begin{example}\label{ex:n=1 2}
Let $F$ and $G$ be the polynomials in Example \ref{ex:n=1 1}. Then, $\Sigma_F$ consists in the matrices
\[\begin{array}{c}
\left(\begin{array}{cc}1 &0\\0&1\end{array}\right),
\left(\begin{array}{cc}0 &1\\1&0\end{array}\right),
\left(\begin{array}{cc}1 &0\\-1&-1\end{array}\right),
\left(\begin{array}{cc}1 &1\\0&-1\end{array}\right),
\left(\begin{array}{cc}0 &-1\\1&1\end{array}\right),
\left(\begin{array}{cc}1 &1\\-1&0\end{array}\right),\\
\noalign{\vspace*{1mm}}
\left(\begin{array}{cc}1 &2\\-2&-1\end{array}\right),
\left(\begin{array}{cc}-2 &-1\\1&2\end{array}\right),
\left(\begin{array}{cc}1 &2\\1&-1\end{array}\right),
\left(\begin{array}{cc}-1 &1\\2&1\end{array}\right),
\left(\begin{array}{cc}2 &1\\-1&1\end{array}\right),
\left(\begin{array}{cc}-1 &1\\-1&-2\end{array}\right).\\
\end{array}
\]
The first six matrices are the automorphisms of $\P^1$ that preserve $\bbV(F)$. The last six matrices are the automorphisms $g$ of $\P^1$ that map  $g\cdot F = G$. In particular, the surjection in \eqref{eq:surj n=1} sends the first six matrices to $F$ and the last six matrices to $G$.
\end{example}

\para 

In Corollary \ref{cor:n=1}, we restrict the study of $H_{3,1}$ to the open subset of polynomials with Waring rank $2$, which coincide with the polynomials with distinct roots. Now, we focus on the case where the polynomial has a double root. 
For  $F=x_{0}^{2} x_{1}$,  $H_{3,1}(F) = \{2[1,0,0],[0,1,0]\}$ where $2[1,0,0]$ denotes the point $[1,0,0]$ with multiplicity $2$. Then, $\Sigma_F$ consists of $g\in\pgl$ such that $\rho(g^{\mathrm{t}})^\t([1,0,0]) = [1,0,0]$ and $\rho(g^{\mathrm{t}})^\t([0,1,0])=[0,1,0]$. One can check that 
\[
\Sigma_F = \left\{\left(
\begin{array}{cc}
a_0&0\\0&a_1
\end{array}
\right):[a_0,a_1]\in\P^1\right\}\simeq \P^1.
\]
Since for every $g\in \Sigma_F$, $g\cdot F = F$, we get that $H_{3,1}^{-1}(H_{3,1}(F))=\{F\}$. By a similar argument as in the proof of Proposition \ref{cor:n=1}, we get that for $F\in\P(S^3V)$, with two distinct roots,  $H_{3,1}^{-1}(H_{3,1}(F))=\{F\}$ set-theoretically.

\para 

\begin{remark}\label{rem:fiberH31} \textsf{(Recovery of the fibers of $H_{3,1}$)} The above study provides an effective method for computing preimages through $H_{3,1}$. Assume that we are given $\{p_1,p_2,p_3\}$ in the image of $H_{3,1}$ where $p_1,p_2,p_3$ are distinct. Consider the ideal $I$ defined by the $2\times 2$ minors of the matrices 
\[
\begin{array}{c}
\left(
\begin{array}{ccc}
a(a-2c)&ab-ad-bc & b(b-2d) \\ p_{1,0}&p_{1,1}&p_{1,2}
\end{array}\right),\,\,
\left(
\begin{array}{ccc}
 c(c-2a)&cd-ad-bc&d(d-2b)\\ p_{2,0}&p_{2,1}&p_{2,2}
\end{array}\right), 
\\
\noalign{\vspace*{2mm}}
\text{ and }
\left(
\begin{array}{ccc}
a^2-c^2&ab-cd&b^2-d^2\\ p_{3,0}&p_{3,1}&p_{3,2}
\end{array}\right),
\end{array}
\]
where $p_i=[p_{i,0},p_{i,1},p_{i,2}]$.
Then, $\bbV(I)$ is the subvariety of $\pgl$ consisting of the elements $g\in\pgl$ such that $\rho(g^{\mathrm{t}})^\t( [1,-1,0]) = p_1$, $\rho(g^{\mathrm{t}})^\t([0,-1,1]) = p_2$, and $\rho(g^{\mathrm{t}})^\t([1,0,1] )= p_3$. Then, $\bbV(I)$ consists of two involutions $g_1$ and $g_2$. In particular, these two involutions can be explicitly computed, since the polynomials
\[\begin{array}{c}
p_{2,0}(p_{1,1}-p_{3,1})a^2+2p_{3,0}(p_{2,1}-p_{1,1})ac+p_{1,0}(p_{3,1}-p_{2,1})c^2,\\
p_{2,2}(p_{3,1}-p_{1,1})b^2+2p_{3,2}(p_{1,1}-p_{2,1})bd+p_{1,2}(p_{2,1}-p_{3,1})d^2
\end{array}
\]
lie in $I$. Solving these equations lead to $4$ possible solutions. Discarding the two solutions that do not vanish at $I$, we get $g_1$ and $g_2$.
Then, the two polynomials in $H_{3,1}^{-1}(\{p_1,p_2,p_3\})$ are 
$
g_1 \cdot x_0 x_1(x_0-x_1)$ and $g_2 \cdot x_0 x_1(x_0-x_1)
$.
\end{remark}

\para

As a consequence of Corollary \ref{cor:n=1}, there exists a rational involution in $\varphi$ in $\P(\S)$ that preserves the fibers of $H_{3,1}$. Let $F,G\in\P(\S)$ with distinct roots such that $H_{3,1}(F)=H_{3,1}(G)$. Let $h_1\in\pgl$ such that 
$h_1\cdot F = x_0x_1(x_0-x_1)$ and consider the automorphism 
$$
h_2 = \begin{pmatrix}
1&2\\-2&-1
\end{pmatrix}
$$
appearing in Example \ref{ex:n=1 2}. Then, for $g =h_1^{-1}h_2h_1 $ we get that $g\cdot F = G$.
This defines a rational map 
$
\psi:\P(\S)\dashrightarrow\P(\mathrm{Mat}(2,2))
$
sending $F$ to $h_1^{-1}h_2h_1 $ given by 
\begin{equation}\label{eq:mat involution}
[a_0,a_1,a_2,a_3]\mapsto
\begin{pmatrix}
9a_3a_0-a_1a_2 & 
2a_1^2-6a_2a_0\\
6a_3a_1-2a_2^2&a_2a_1-9a_3a_0
\end{pmatrix},
\end{equation}
where $a_0,\ldots,a_3$ are the coordinates of $\P(\S)$. Note that the determinant of the matrix in \eqref{eq:mat involution} is the equation defining the secant variety to $C$.
 Then, the desired rational involution
 $
\iota:\P(\S)\dashrightarrow\P(\S)
$
sends $F$ to $\psi(F)\cdot F$. In coordinates, $\iota$ is given by four  homogeneous polynomial of degree $7$. One can check that the determinant of the matrix in \eqref{eq:mat involution} divides these four polynomials. We conclude that $\iota$ is given by three polynomials of degree four.
A computation using the software \textsc{Macaulay2}  \cite{M2} shows that the four polynomials defining the involution $\iota$ are 
\[ \begin{array}{c}
-2a_1^3+9a_0a_1a_2-27a_0^2a_3, \text{ }
3(-a_1^2a_2+6a_0a_2^2-9a_0a_1a_3), \text{ }
3(a_1a_2^2-6a_1^2a_3+9a_0a_2a_3)\\ \text{ and }
2a_2^3-9a_1a_2a_3+27a_0a_3^2.
\end{array}
\]
Moreover, the radical of the ideal generated by these polynomials is the ideal of the twisted cubic $C$.

\para

We finish this section with the computation of the image of $ H_{3,1}$. To do so, we embed $\mathrm{Sym}^3\P^2$ in $\P^9$ through the global sections of $\mathcal{O}_{(\P^2)^3}(1,1,1)^{S_3}$.

\para 

\begin{proposition}
The image of $H_{3,1}$ is a subvariety of $\P^9$ of dimension $3$, degree $10$ and its ideal is generated by $3$ linear forms and $10$  cubic forms.
\end{proposition}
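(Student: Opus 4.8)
The plan is to make the rational map $H_{3,1}\colon\P(S^3V)\dashrightarrow\mathrm{Sym}^3\P^2\hookrightarrow\P^9$ completely explicit and then run a Gröbner basis computation on its image. First I would fix the embedding: identify $\mathrm{Sym}^3\P^2$ with the image of $(\P^2)^3$ under the linear system $|\mathcal{O}(1,1,1)^{S_3}|$, so a coordinate on $\P^9$ is a symmetrized monomial $\sum_{\sigma\in S_3}\prod_i z_{\sigma(i)}^{(e_i)}$ in the three sets of coordinates of the three points. Concretely, writing a cubic $F=a_0x_0^3+a_1x_0^2x_1+a_2x_0x_1^2+a_3x_1^3$, the Hessian matrix $\hessMat_F$ has linear entries, so $h_F$ is the linear map $\P^1\to\P^2$ whose matrix in the monomial basis is the $3\times 2$ coefficient matrix of $(\partial^2F/\partial x_i\partial x_j)$. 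The Hessian variety $H_{3,1}(F)$ is the image of the three roots of $F$ under this linear map; equivalently, $h_F(\bbV(F))$ is cut out in $\P^2$ by the binary cubic obtained by pushing $F$ forward along $h_F$. Pushing forward a degree-$3$ divisor on $\P^1$ along a linear embedding into $\P^2$, and then symmetrizing, gives each of the ten $\P^9$-coordinates of $H_{3,1}(F)$ as an explicit (degree $9$, since $h_F$ is linear in $a$ and the divisor is degree $3$) homogeneous polynomial in $a_0,\dots,a_3$ — though the content divides out, as in the computation of $\iota$ above, so one should simplify.

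With this parametrization $\P^3\dashrightarrow\P^9$ in hand, the next step is purely computational: compute the kernel of the induced map on coordinate rings using elimination (a $\mathtt{Macaulay2}$ or $\mathtt{Sage}$ call), which yields the homogeneous ideal of the image, and then read off its minimal generators, Hilbert polynomial, dimension and degree. I expect the output to be: dimension $3$ (since $H_{3,1}$ is generically finite onto its image by Corollary~\ref{cor:n=1}, and the source $\P(S^3V)\cong\P^3$ is $3$-dimensional), three linear generators (these cut out the $3$-dimensional linear span of $\mathrm{Sym}^3\P^2$ inside $\P^9$ — equivalently they encode the constraint that the three Hessian points lie on a conic, reflecting the rank-$\le 3$ condition on the $3\times 3$ symmetric Hessian of a binary cubic), and ten cubic generators, with the degree of the threefold coming out to $10$. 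Once the Gröbner basis certifies these numbers, the proposition is proved.

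The main obstacle is not conceptual but bookkeeping: getting the symmetrized pushforward coordinates right and in a form clean enough that the elimination terminates quickly. The honest subtlety is that $H_{3,1}$ is defined on $\P(S^3V)\setminus C$ (it degenerates on the twisted cubic $C$ of perfect cubes, exactly where $h_F$ fails to be a linear embedding), so strictly speaking one computes the closure of the image of the open set; I would note that the parametrizing polynomials, after removing content, still extend over (a dense part of) $C$, or simply remark that the Zariski closure is unaffected since $C$ has codimension $2$. A secondary point worth checking by hand, rather than trusting the machine blindly, is that the three linear forms are genuinely independent and that no cubic generator is a multiple of a linear one — i.e.\ that the listed generating set is minimal — which follows from inspecting the Betti table returned by $\mathtt{Macaulay2}$. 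Everything else (dimension, degree, the count $3$ linear and $10$ cubic) is then a direct read-off from that computation.
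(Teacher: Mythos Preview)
Your proposal is correct and matches the paper's own argument, which is simply ``a direct computation using the software \textsc{Macaulay2}''; you have spelled out the setup of that computation in more detail than the paper does. One small aside: your parenthetical interpretation of the three linear forms is slightly off --- the Hessian of a binary cubic is $2\times 2$, and the three image points are in fact collinear (they lie on the line $h_F(\P^1)\subset\P^2$), not merely on a conic --- but this is tangential commentary and does not affect the computational proof itself.
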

\begin{proof}
    This result is obtained by a direct computation using the software \textsc{Macaulay2}  \cite{M2}.
\end{proof}

\subsection{Variety of $k$--gradients}\label{sec:var k planes}

 Now, we introduce the algebraic objects we need for studying the map $H_{3,n}$ for $n\geq 2$. 
For $F\in \P(\S)$, we consider the vector space 
spanned by it first derivatives $\langle \nabla F\rangle:=\langle \frac{\partial F}{\partial x_0},\ldots,\frac{\partial F}{\partial x_n}\rangle\subseteq S^2V$.  As commented in the introduction, the strategy for studying the map $H_{3,n}$ for $n\geq 2$ is to look at $h_F(\P^n)$  instead of $H_{3,n}(F)$. 

\para

\begin{lemma}\label{lemma:grad hess}
 For $F\in\P(\S)$, $\P(\langle \nabla F\rangle)=h_F(\P^n)$.
\end{lemma}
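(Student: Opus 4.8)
The plan is to show that, for a cubic $F$, the rational map $h_F\colon\P^n\dashrightarrow\P(S^2V)$ is nothing but the projectivization of the linear map $V\to S^2V$ given by the gradient of $F$, and then simply to read off its image.

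First I would write out $h_F(p)$ as an element of $S^2V$. Since the coordinate $z_{i,j}$ of $\P(S^2V)$ represents the monomial $x_ix_j$, the point $h_F(p)$ is the quadratic form $\sum_{i,j}\frac{\partial^2F}{\partial x_i\partial x_j}(p)\,x_ix_j$. As $F$ has degree $3$, every second derivative $\frac{\partial^2F}{\partial x_i\partial x_j}$ is a linear form, so its value at $p=[p_0,\dots,p_n]$ is $\sum_k\frac{\partial^3F}{\partial x_i\partial x_j\partial x_k}\,p_k$. Substituting and exchanging the order of summation gives
\[
h_F(p)=\sum_{k}p_k\left(\sum_{i,j}\frac{\partial^3F}{\partial x_i\partial x_j\partial x_k}\,x_ix_j\right).
\]

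The key step is to identify the inner sum. For each fixed $k$, the polynomial $\frac{\partial F}{\partial x_k}$ is a quadratic form whose Hessian matrix has the constant entries $\frac{\partial^3F}{\partial x_i\partial x_j\partial x_k}$, so applying Euler's formula twice (exactly as in the computation for $d=2$ recalled in the introduction) yields $\sum_{i,j}\frac{\partial^3F}{\partial x_i\partial x_j\partial x_k}\,x_ix_j=2\,\frac{\partial F}{\partial x_k}$. Hence $h_F(p)=2\sum_k p_k\,\frac{\partial F}{\partial x_k}$, and therefore, as a rational map, $h_F$ coincides with the linear projection $\P^n\dashrightarrow\P(S^2V)$ attached to the linear map $\ell\colon V\to S^2V$, $e_k\mapsto\frac{\partial F}{\partial x_k}$, where $e_0,\dots,e_n$ is the standard basis of $V$.

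To finish I would invoke the elementary fact that the linear projection $\P(V)\dashrightarrow\P(S^2V)$ associated with a linear map $\ell$ is defined away from $\P(\ker\ell)$ and is surjective onto $\P(\mathrm{im}\,\ell)$, which is a linear subspace and hence already Zariski closed; since $\mathrm{im}\,\ell=\langle\nabla F\rangle$ (nonzero because $F\neq0$ and $\mathrm{char}\,\K=0$), this gives $h_F(\P^n)=\P(\langle\nabla F\rangle)$. I do not anticipate any genuine obstacle: the argument is pure linear algebra, and the only points requiring a little care are keeping track of the scalar $2$ produced by Euler's identity and observing that passing to the Zariski closure changes nothing because linear subspaces are closed.
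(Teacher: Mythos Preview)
Your proof is correct and follows essentially the same approach as the paper: both arguments use Euler's formula to show that, as elements of $S^2V$, one has $h_F(e_i)=2\,\partial F/\partial x_i$ (you state this for a general point $p$, the paper for the basis vectors, which is equivalent since the map is linear), and then read off the image of the resulting linear map. The only difference is presentation.
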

\begin{proof}
 By the Euler formula, the $(a,b)$-th entry of $2\frac{\partial F}{\partial x_i}$, as a symmetric matrix in $\P(\SS)$, is the third derivative $\frac{\partial^3 F}{\partial x_i\partial x_a \partial x_b}$. On the other hand,
let $\{e_0,\ldots,e_n\}$ be the canonical basis of $\P^n$. Then, $h_F(\P^n)$ is the span of $\{h_F(e_0),\ldots,h_F(e_n)\}$. The $(j,k)$-th entry of $h_F(e_i)$ is the coefficient of $x_i$ in $\frac{\partial^2F}{\partial x_i\partial x_j}$, which is  $\frac{\partial^3 F}{\partial x_i\partial x_j \partial x_k}$. Hence, the symmetric matrices $2F_i$ and $h_F(e_i)$ coincide, and $\P(\langle \nabla F\rangle)=h_F(\P^n)$.
\end{proof}

\para

Using this lemma, we  consider the morphism $\alpha_n$ introduced in \eqref{eq:alpha first} as the morphism mapping $F$ to $\P(\langle\nabla F\rangle)$. 
For $0\leq k\leq n$, let $\sub_k$ be the variety of polynomials $F\in\P(S^3V)$ such that $\dim\P(\langle\nabla F\rangle)\leq k$.
The ideal of $\sub_k$ is described by the $(k+2)\times( k+2)$ minors of the catalecticant matrix $\mathrm{Cat}(2,1:n+1)$ (see \cite{cat1}). In \cite{IV} Lemma 1.22, the following  description of $\sub_k$ is given:
\begin{equation}\label{eq:sub def2}
\sub_k= \{F\in \P(\S): \exists U\in\mathrm{Gr}(k+1,V) \text{ s.t. } F\in\P(S^3U)\}.
\end{equation}
Moreover, by \cite{IV} Proposition 1.23, we get that $\sub_k$ is an irreducible variety of dimension 
\begin{equation}\label{eq:dim sub}
(k+1)(n-k)+\binom{3+k}{3}-1.
\end{equation}

We consider the rational map $\alpha_k:\sub_k\dashrightarrow \Gr(k,\P(\SS))$ sending $F$ to $\grad$.
The base locus of $\alpha_k$ is $\sub_{k-1}$. 
We define the variety of $k$--gradients $\phi_k$ as
\[
\phi_k:= \{E\in\Gr(k,\P(\SS)):\exists F\in\P(\S)\text{ s.t. } \P(\langle\nabla F\rangle )\subseteq E\}.
\] 
Similarly, we define the variety of full dimensional $k$--gradients $\mathcal{Z}_k$ as the closure of the image of $\alpha_k$.  Therefore, from the irreducibility of $\sub_k$ we deduce that $\cZ_k$ is irreducible for every $k$. 

\para 

From the definition of $\phi_k$, it is not clear whether $\phi_k$ is a projective algebraic variety.
To check this, we decompose  $S^2V\otimes V$ as $\mathrm{Gl}(V)$--module. Applying the Pieri formula (see \cite{Ten}), we get that $
S^2V\otimes V = S^3V\oplus \mathbb{S}_{(2,1)}V,
$
and we have a short exact sequence 
\begin{equation}\label{eq:ses}
\begin{tikzcd}
0 \arrow[r] & S^3V \arrow[r] & S^2V\otimes V \arrow[r, "\pi"] & {\mathbb{S}_{(2,1)}} V \arrow[r] & 0
\end{tikzcd}.
\end{equation}

The following result was provided by Fulvio Gesmundo in a personal communication.

\para 

\begin{lemma}\label{lemma:ker E}
For $k\leq n$ and $E\in\mathrm{Gr}(k+1,\SS)$ consider the restriction
$
\pi|_E:E\otimes V\rightarrow\mathbb{S}_{(2,1)}
$, where $\pi$ is the surjection in \eqref{eq:ses}. Then, 
$
\mathrm{Ker}(\pi|_E)=\{F\in\S:\langle\nabla F\rangle\subseteq E \}.
$
\end{lemma}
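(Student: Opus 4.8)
The plan is to prove the two inclusions $\mathrm{Ker}(\pi|_E) \subseteq \{F \in S^3V : \langle \nabla F\rangle \subseteq E\}$ and the reverse, by tracking explicitly what the map $\pi$ does on decomposable tensors. First I would recall that the inclusion $S^3V \hookrightarrow S^2V \otimes V$ in \eqref{eq:ses} is, up to a harmless scalar, the \emph{total polarization / comultiplication} map: a cubic $F$ is sent to $\sum_i \frac{\partial F}{\partial x_i} \otimes x_i \in S^2V \otimes V$. This is the natural splitting candidate to work with, and the key algebraic fact I need is that this is well defined and injective (it is, since applying the contraction $S^2V\otimes V \to S^3V$ recovers $3F$ by Euler). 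So the subspace $S^3V \subseteq S^2V\otimes V$ is precisely the image of $F \mapsto \sum_i \partial_i F \otimes x_i$.

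Next I would unwind the statement ``$\langle \nabla F\rangle \subseteq E$'' for $F \in S^3V$. Writing $F$ via its image $\sum_i \partial_i F \otimes x_i$ under the inclusion, the condition $\partial_i F \in E$ for all $i$ is exactly the statement that this tensor lies in $E \otimes V \subseteq S^2V \otimes V$. Therefore $\{F \in S^3V : \langle\nabla F\rangle \subseteq E\}$ is canonically identified with $S^3V \cap (E\otimes V)$, the intersection taken inside $S^2V\otimes V$. On the other hand, by exactness of \eqref{eq:ses}, $S^3V = \mathrm{Ker}(\pi)$, so $S^3V \cap (E\otimes V) = \mathrm{Ker}(\pi) \cap (E\otimes V) = \mathrm{Ker}(\pi|_{E\otimes V}) = \mathrm{Ker}(\pi|_E)$, which is exactly the claimed equality. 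The point where one must be slightly careful — and what I'd single out as the main (minor) obstacle — is the identification of the subspace $S^3V$ inside $S^2V\otimes V$ with the image of the polarization map rather than with some other $\mathrm{GL}(V)$-equivariant copy; since $S^3V$ appears with multiplicity one in $S^2V\otimes V$ by the Pieri decomposition $S^2V\otimes V = S^3V \oplus \mathbb{S}_{(2,1)}V$, this copy is unique, so any explicit equivariant embedding one writes down (in particular the polarization one) has the same image, and the argument goes through.

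Concretely, to make the bookkeeping airtight I would: (1) fix the basis $x_0,\dots,x_n$ and write a general element of $S^2V\otimes V$ as $\sum_i q_i \otimes x_i$ with $q_i \in S^2V$; (2) describe $\pi$ as the projection killing the ``symmetric part'' $\sum_i \partial_i F \otimes x_i$, equivalently $\pi(\sum_i q_i\otimes x_i)$ vanishes iff there is a cubic $F$ with $q_i = \partial_i F$ for all $i$ (this is the compatibility condition $\partial_j q_i = \partial_i q_j$ in $S^1V$, i.e.\ the ``closedness'' of the collection $(q_i)$, which is precisely the condition cutting out $\mathrm{Ker}(\pi) = S^3V$); (3) observe that $\sum_i q_i\otimes x_i \in E\otimes V$ iff every $q_i \in E$; and (4) conclude that $\mathrm{Ker}(\pi|_E) = \{\sum q_i\otimes x_i : q_i\in E,\ \partial_j q_i = \partial_i q_j\} = \{F \in S^3V : \partial_i F \in E \ \forall i\} = \{F\in S^3V : \langle\nabla F\rangle \subseteq E\}$. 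All four steps are routine once the identification in (2) is pinned down, so I would not expand the calculations beyond stating them.
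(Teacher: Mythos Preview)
Your proof is correct and takes essentially the same approach as the paper: both identify $S^3V\subset S^2V\otimes V$ with the image of the polarization map $F\mapsto\sum_i\partial_iF\otimes x_i$ (the paper via Euler's formula, you via the Pieri multiplicity-one argument) and then read off $\mathrm{Ker}(\pi|_E)=S^3V\cap(E\otimes V)=\{F:\partial_iF\in E\text{ for all }i\}$. The only cosmetic difference is that for the reverse inclusion the paper verifies $\pi(F)=0$ by an explicit wedge computation using the symmetry of second partials, whereas you invoke exactness of \eqref{eq:ses} directly.
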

\begin{proof}
Let 
$
F=\sum_{i=0}^n f_i\otimes x_i\in\mathrm{Ker}(\pi|_E)
$
where $f_i\in E$. Using the Euler formula we get that
$
F=\sum_{i=0}^n \frac{\partial F}{\partial x_i}\otimes x_i.
$
We  deduce that $F_i=f_i$ and hence, $\langle\nabla F\rangle\subseteq E$.

Conversely, consider  $F=\sum_{i=0}^n F_i\otimes x_i\in \S$ such that $\langle \nabla F\rangle \subseteq E$.  Using the Euler formula we get 
$
F=\sum_{i,j} (\frac{\partial^2 F}{\partial x_i\partial x_j}\otimes x_j)\otimes x_i.
$
In particular, we get that
\[
\begin{array}{ccl}
\displaystyle\pi(F) &=& \sum_{i,j} F_{i,j}\otimes (x_i\wedge x_j)\\
&=&\sum_{i<j}F_{i,j}\otimes (x_i\wedge x_j)+\sum_{j<i} F_{i,j}\otimes (x_i\wedge x_j)\\ &=&  \sum_{i<j}F_{i,j}\otimes (x_i\wedge x_j)-\sum_{j<i} F_{i,j}\otimes (x_j\wedge x_i)=0.
\end{array}
\]
\end{proof}

 From this lemma, we derive the following result
 
\para

\begin{corollary}\label{co:phi var}
For $k\leq n$, $\phi_k$  is a closed subvariety of $\Gr(k,\P(\SS))$.
\end{corollary}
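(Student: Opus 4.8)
The plan is to exhibit $\phi_k$ as the image of a projective variety under a morphism to the Grassmannian $\Gr(k,\P(\SS))$, so that closedness follows from the properness of projective morphisms. The natural candidate for the source is a relative Grassmannian sitting over $\Gr(k,\P(\SS))$ whose fiber over a $(k+1)$--dimensional subspace $E \subseteq \SS$ records the kernel $\mathrm{Ker}(\pi|_E) = \{F \in \S : \langle \nabla F\rangle \subseteq E\}$ furnished by Lemma \ref{lemma:ker E}; however, since these kernels jump in dimension, it is cleaner to work directly with the incidence variety. Concretely, I would consider the incidence correspondence
\[
\Phi := \{(E, F) \in \Gr(k,\P(\SS)) \times \P(\S) : \langle \nabla F\rangle \subseteq E\},
\]
and show that $\phi_k$ is exactly the image of $\Phi$ under the first projection $\mathrm{pr}_1$.

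First I would argue that $\Phi$ is closed in $\Gr(k,\P(\SS)) \times \P(\S)$. The condition $\langle \nabla F\rangle \subseteq E$, i.e. that each first partial derivative $\frac{\partial F}{\partial x_i}$ lies in the $(k+1)$--dimensional linear space $E$, is a Zariski-closed condition: on the product, pulling back the tautological subbundle $\mathcal{S} \subseteq \SS \otimes \mathcal{O}$ of the Grassmannian and composing with the linear maps $F \mapsto \frac{\partial F}{\partial x_i}$ (which are the components of the morphism to $\SS$ coming from the catalecticant, hence linear in $F$), the locus where all these images land in $\mathcal{S}$ is the common zero locus of the induced sections of the quotient bundle $\mathcal{Q}$. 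Equivalently, one may invoke Lemma \ref{lemma:ker E} directly: $\langle\nabla F\rangle \subseteq E$ iff $F \in \mathrm{Ker}(\pi|_E)$, and the vanishing of $\pi|_E(F)$ in $\mathbb{S}_{(2,1)}V$ is evidently a closed condition jointly in $E$ and $F$. Either way, $\Phi$ is a closed subvariety of a product of projective varieties, hence projective.

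Next I would observe that $\mathrm{pr}_1(\Phi) = \phi_k$ by definition: a point $E \in \Gr(k,\P(\SS))$ lies in $\phi_k$ precisely when there exists $F \in \P(\S)$ with $\P(\langle\nabla F\rangle) \subseteq E$, which is exactly the condition that $(E,F) \in \Phi$ for some $F$. Since $\mathrm{pr}_1$ is a projective morphism (base change of the structure morphism $\P(\S) \to \mathrm{Spec}\,\K$), it is proper, and the image of a proper morphism is closed; therefore $\phi_k = \mathrm{pr}_1(\Phi)$ is a closed subvariety of $\Gr(k,\P(\SS))$. The restriction $k \leq n$ enters through Lemma \ref{lemma:ker E}, which is what guarantees that the kernel description of $\Phi$ is the correct one; for $k > n$ every $(k+1)$--space contains some $\langle\nabla F\rangle$ and the statement degenerates. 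The only mildly delicate point, and the one I would be most careful about, is making the "closed condition" claim rigorous in family: since $\dim\langle\nabla F\rangle$ is not constant, one should phrase the incidence condition as a vanishing in a fixed vector bundle (the quotient bundle $\mathcal{Q}$ pulled back to the product, or $\mathbb{S}_{(2,1)}V \otimes \mathcal{O}$ via the relative version of $\pi|_E$) rather than as a rank condition, so that no flatness or generic-rank hypothesis is needed.
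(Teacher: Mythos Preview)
Your proof is correct and takes a genuinely different route from the paper's. The paper argues via a degeneracy locus: it defines a morphism $\beta$ from $\Gr(k,\P(\SS))$ into a projective space of linear maps, sending $\P(E)$ to (essentially) $\pi|_E:E\otimes V\to\mathbb{S}_{(2,1)}V$, and observes by Lemma~\ref{lemma:ker E} that $\phi_k$ is precisely the locus where this map drops rank, hence the preimage of a closed determinantal variety. Your approach instead builds the incidence correspondence $\Phi\subset\Gr(k,\P(\SS))\times\P(\S)$ and obtains $\phi_k$ as its image under the proper first projection. Both arguments rest on Lemma~\ref{lemma:ker E}, but the paper packages the existential quantifier ``$\exists F$'' as a rank condition on a bundle map, while you package it as a projection from a product. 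The degeneracy-locus argument is slightly more economical (no properness needed, and it immediately suggests determinantal equations for $\phi_k$), whereas your incidence-variety argument is more transparent and would generalize verbatim if one replaced $\P(\S)$ by any projective parameter space. One small remark: your aside that ``for $k>n$ every $(k+1)$--space contains some $\langle\nabla F\rangle$'' is not obviously true and is in any case unnecessary---your closedness argument works for all $k$, and the restriction $k\le n$ in the statement is simply the range in which the question is interesting.
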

\begin{proof}

Consider the  map $\beta:\Gr(k,\P(\SS))\rightarrow\P\left(\mathrm{Mat}(S^2V\otimes V, \mathbb{S}_{(2,1)})\right) $
sending a $k$--plane $\P(E)$ to the composition of the projection $S^2V\otimes V\rightarrow E\otimes V$ with $\pi:S^2V\otimes V\rightarrow \mathbb{S}_{(2,1)}$. Let $l:=\mathrm{max}\{\dim(E\otimes V),\dim(\mathbb{S}_{(2,1)})\}$. Then, the image of $\beta$ lies in the subvariety of matrices of rank at most $l$. The preimage of the subvariety of matrices of rank at most $l-1$ is $\phi_k$. Indeed, by Lemma \ref{lemma:ker E}, $E\in\mathrm{Gr}(k+1,\SS)$ lies in $\phi_k$ if and only if 
$\pi|_E$ does not have maximum rank, if and only if $\beta(E)$ has rank at most $l-1$.
\end{proof}

\para 

By construction, the image of $\alpha_k$ lies in $\phi_k$ and since $\phi_k$ is closed,  we get that 
$\cZ_k\subseteq\phi_k$. Then, for $0\leq l\leq k \leq n$ we consider the variety $F(\cZ_l,k)$ defined as
\[
F(\cZ_l,k):= \{\Gamma\in\Gr(k,\P(\SS)):\exists \Lambda\in\cZ_l\text{ s.t. } \Lambda\subseteq\Gamma
\}.
\]

\para

\begin{proposition}\label{prop:desc phi_k}
For $l\leq k\leq n$, $F(\cZ_l,k)$ is an irreducible subvariety of $\phi_k$ and 
\[
\phi_k=\bigcup_{l\leq k} F(\cZ_l,k)
\]
\end{proposition}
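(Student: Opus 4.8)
The plan is to prove the two assertions separately: first that each $F(\cZ_l,k)$ is irreducible and contained in $\phi_k$, and then that $\phi_k$ is exhausted by these flag-type loci as $l$ ranges over $0,\ldots,k$. For the containment $F(\cZ_l,k)\subseteq\phi_k$, I would argue directly from the definitions: if $\Gamma\in F(\cZ_l,k)$, there is some $\Lambda\in\cZ_l$ with $\Lambda\subseteq\Gamma$; by construction $\cZ_l$ is the closure of the image of $\alpha_l$, so a generic such $\Lambda$ equals $\P(\langle\nabla F\rangle)$ for some $F\in\sub_l$, whence $\P(\langle\nabla F\rangle)\subseteq\Gamma$ and $\Gamma\in\phi_k$. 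For $\Lambda$ in the boundary of $\cZ_l$ one passes to the limit, using that $\phi_k$ is closed (Corollary \ref{co:phi var}). For irreducibility of $F(\cZ_l,k)$, I would exhibit it as the image of an incidence variety: consider
\[
\mathcal{I}:=\{(\Lambda,\Gamma)\in\cZ_l\times\Gr(k,\P(\SS)):\Lambda\subseteq\Gamma\},
\]
with its two projections. The first projection $\mathcal{I}\to\cZ_l$ is a fiber bundle whose fiber over $\Lambda$ is the Grassmannian of $k$--planes in $\P(\SS)$ containing the fixed $l$--plane $\Lambda$, i.e.\ a Grassmannian $\Gr(k-l-1,\,S^2V/\widehat\Lambda)$, which is irreducible of constant dimension; since $\cZ_l$ is irreducible (noted in the excerpt, from irreducibility of $\sub_l$), $\mathcal{I}$ is irreducible, and $F(\cZ_l,k)$, being the image of $\mathcal{I}$ under the second projection, is irreducible.

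For the equality $\phi_k=\bigcup_{l\le k}F(\cZ_l,k)$, the inclusion $\supseteq$ is exactly the containment proved above. For $\subseteq$, take $\Gamma=\P(E)\in\phi_k$. By definition there exists $F\in\P(\S)$ with $\P(\langle\nabla F\rangle)\subseteq\Gamma$. Set $l:=\dim\P(\langle\nabla F\rangle)$; by the characterization \eqref{eq:sub def2} of $\sub_l$, $F\in\sub_l$ (in fact $F\in\sub_l\setminus\sub_{l-1}$), so $\alpha_l$ is defined at $F$ and $\Lambda:=\P(\langle\nabla F\rangle)=\alpha_l(F)$ lies in $\cZ_l$. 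Since $\Lambda\subseteq\Gamma$ and $\dim\Lambda=l\le k$, we get $\Gamma\in F(\cZ_l,k)$. This establishes both inclusions and hence the decomposition.

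The main point requiring care — and the step I expect to be the real obstacle — is the boundary behavior in the containment $F(\cZ_l,k)\subseteq\phi_k$: a priori $\cZ_l$ is only the \emph{closure} of the image of $\alpha_l$, so for $\Lambda\in\cZ_l\setminus\alpha_l(\sub_l)$ one must check that $\Lambda$ still bounds some polynomial gradient space, or else deduce $\Gamma\in\phi_k$ by taking a limit of pairs $(\Lambda_t,\Gamma_t)$ with $\Lambda_t=\alpha_l(F_t)\subseteq\Gamma_t$. The cleanest way is to run the incidence-variety argument on the subvariety $F(\cZ_l,k)$ directly and invoke that $\phi_k$ is closed: the generic point of the irreducible variety $F(\cZ_l,k)$ lies in $\phi_k$ (since generically $\Lambda$ comes from an honest $F$), hence all of $F(\cZ_l,k)$ does by closedness. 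One should also note that the union is in fact increasing in a suitable sense — $F(\cZ_l,k)\subseteq F(\cZ_{l'},k)$ need not hold, but the finitely many pieces $l=0,\ldots,k$ do cover $\phi_k$, which is all that is claimed.
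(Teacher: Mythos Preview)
Your proposal is correct and follows essentially the same approach as the paper: both use the incidence variety $\{(\Lambda,\Gamma)\in\cZ_l\times\Gr(k,\P(\SS)):\Lambda\subseteq\Gamma\}$ with its two projections to establish irreducibility, and both handle the containment $F(\cZ_l,k)\subseteq\phi_k$ by observing that a generic point of the irreducible variety $F(\cZ_l,k)$ lies in $\phi_k$ and then invoking the closedness of $\phi_k$ (Corollary~\ref{co:phi var}). Your explicit discussion of the boundary issue in $\cZ_l$ is a welcome clarification of a point the paper treats somewhat tersely.
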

\begin{proof}
Consider the subvariety 
$
\Sigma:=\{(E,F)\in \cZ_l\times \Gr(k,\P(\SS)):E\subseteq F\}
$
and the  projections 
$
\pi_1:\Sigma\rightarrow \cZ_l
$ and $
\pi_2:\Sigma\rightarrow  \Gr(k,\P(\SS)).
$ 
All the fibers of $\pi_1$ are irreducible and of the same dimension. Since $\cZ_l$ is irreducible, by \cite{HR} Theorem 11.14, we deduce that $\Sigma$ is irreducible too. 
 Since the image of $\pi_2$ is $F(\cZ_l,k)$, we conclude that $F(\cZ_l,k)$ is irreducible.

For $\Gamma\in\phi_k$, there exists $F\in\P(S^3V)$ such that $\P(\langle \nabla F\rangle )\subseteq \Gamma$. Thus, denoting the dimension of $\P(\langle \nabla F\rangle)$ by $l$, we get that $\Gamma$ lies in $F(\cZ_l,k)$. On the other hand, for $l\leq k$ and $\Gamma\in F(\cZ_l,k)$ generic, there exists $F\in \sub_l$ such that $\P(\langle \nabla F\rangle)\subseteq \Gamma$. Therefore, $\Gamma\in \phi_k$, and since $\phi_k$ is closed and $F(\cZ_l,k) $  is irreducible, we conclude that $F(\cZ_l,k)\subset \phi_k$.
\end{proof}

\para 

As a consequence, we get that the irreducible components of $\phi_k$ are of the form $F(\cZ_l,k)$ for $0\leq l\leq k$.
In the cases $k=0$ and $k=1$, $\phi_k$ has an simple description.
\begin{itemize}
\item For $k =0$, the base locus of $\alpha_0$ is empty and  $\sub_0$ is  the Veronese variety $V^{3,n}$ of cubes of linear forms.
Thus,  for   $L^3\in \sub_0$, we get that $\alpha_0(L^3)= L^2 \in \P(\SS)$. Thus, $\cZ_0=\phi_0$ is the Veronese variety 
$
V^{2,n}
$.
\item For $k=1$, $\sub_1 = \sigma_2(V^{3,n})$, where $\sigma_2(V^{3,n})$ is the secant variety of $\mathcal{V}_3$ (see \cite{cat1} Theorem 3.3.). Let $F\in \sub_1\setminus\sub_0$. Then, either $F=L_1^3 +L_2^3$ or $F=L_1^2L_2$ for distinct $L_1,L_2\in\P(S^1V)$. In the first case, $\P(\langle \nabla F\rangle) = \langle L_1^2,L_2^2\rangle$ which is a chord of $\mathcal{V}_2$. In the second case, 
 $\P(\langle \nabla F\rangle )=\langle L_1^2,L_1L_2\rangle$ which is a tangent line of $V^{2,n}$. Hence, we deduce that $\mathcal{Z}_1$ is the variety of secant lines  $\sigma_2(V^{2,n})$
 \end{itemize}
 
Therefore, we derive the following result.

\para 

\begin{corollary}
In the cases $k=0,1$,
$
\phi_0=\mathcal{Z}_0=V^{2,n},$ $\mathcal{Z}_1=\sigma_2(V^{2,n}),$ and $\phi_1 = F(V^{2,n},1).
$
In particular, $\dim\cZ_0=\dim\phi_0 = n$, $\dim\cZ_1 = 2n$, and $\dim \phi_1 = n-2+\binom{n+2}{2}$.
\end{corollary}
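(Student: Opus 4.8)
The plan is to reduce the corollary to three dimension computations, since the identifications of the varieties themselves are essentially contained in the two displayed cases $k=0,1$ above together with Proposition \ref{prop:desc phi_k}. For $k=0$ there is nothing left to do: $\phi_0=\cZ_0=V^{2,n}$, and this is the quadratic Veronese embedding of $\P^n$, so $\dim\cZ_0=\dim\phi_0=\dim\P^n=n$.

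For $\cZ_1$ I would argue via a generic parametrization. A generic element of $\cZ_1$ is a chord $\langle L_1^2,L_2^2\rangle$ of $V^{2,n}$, so consider the rational map $\mathrm{Sym}^2(V^{2,n})\dashrightarrow\cZ_1$ sending $\{L_1,L_2\}$ to the line they span. It is dominant by the $k=1$ discussion, and it is generically injective because $V^{2,n}$ has no trisecant lines: if three distinct points $L_1^2,L_2^2,L_3^2$ were collinear, then $L_3^2=\alpha L_1^2+\beta L_2^2$ with $\alpha,\beta\neq 0$, but $\alpha L_1 L_1^{\mathrm{t}}+\beta L_2 L_2^{\mathrm{t}}$ has rank $2$ whenever $L_1,L_2$ are linearly independent, contradicting $\mathrm{rank}(L_3 L_3^{\mathrm{t}})=1$. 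Hence $\dim\cZ_1=2\dim V^{2,n}=2n$. I would remark at this point that, regarded inside $\P(\SS)$, this is exactly the statement that the secant variety $\sigma_2(V^{2,n})$ is \emph{defective} (dimension $2n$ rather than the a priori expected $2n+1$); this is the one place where the argument is not pure bookkeeping, and the parametrization-by-pairs-of-points argument is precisely what pins it down.

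For $\phi_1$ I would first establish the identification $\phi_1=F(V^{2,n},1)$. By Proposition \ref{prop:desc phi_k}, $\phi_1=F(\cZ_0,1)\cup F(\cZ_1,1)$. Since the elements of $\cZ_1$ are themselves lines, $F(\cZ_1,1)=\cZ_1$; and every chord $\langle L_1^2,L_2^2\rangle$ passes through $[L_1^2]\in V^{2,n}=\cZ_0$ (and every tangent line $\langle L^2,LM\rangle$ through $[L^2]$), so, using that $F(\cZ_0,1)$ is closed, $\cZ_1\subseteq F(\cZ_0,1)$ and therefore $\phi_1=F(\cZ_0,1)=F(V^{2,n},1)$. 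For the dimension I would use the incidence variety
\[
\Sigma:=\{([L^2],\Gamma)\in V^{2,n}\times\Gr(1,\P(\SS)):[L^2]\in\Gamma\}
\]
with its two projections $p_1,p_2$. The fibers of $p_1$ are the families of lines of $\P(\SS)$ through a fixed point, each isomorphic to $\P^{N-1}$ with $N=\dim\P(\SS)=\binom{n+2}{2}-1$; since $V^{2,n}$ is irreducible and these fibers are irreducible of constant dimension, $\Sigma$ is irreducible of dimension $n+\binom{n+2}{2}-2$. The fiber of $p_2$ over a line $\Gamma$ is $\Gamma\cap V^{2,n}$, which is finite by the no-trisecant remark (a line meets $V^{2,n}$ in at most two points), so $p_2$ is generically finite onto its image $\phi_1=F(V^{2,n},1)$ and $\dim\phi_1=\dim\Sigma=n-2+\binom{n+2}{2}$.

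I expect no genuine obstacle here: once the defectivity of $\sigma_2(V^{2,n})$ is handled as above, the rest is a routine application of the fiber dimension theorem to Grassmannian incidence correspondences, in the same spirit as the proof of Proposition \ref{prop:desc phi_k}.
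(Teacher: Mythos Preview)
Your argument is correct, and the approach is the natural one. In the paper the corollary is stated as an immediate consequence of the preceding bulleted discussion of the cases $k=0,1$ and carries no separate proof; your write-up simply fills in the dimension computations the paper leaves implicit, using the same incidence-correspondence technique that appears in the proof of Proposition \ref{prop:desc phi_k}.

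One comment on the aside about defectivity. The variety $\cZ_1$ lives in $\Gr(1,\P(\SS))$, not in $\P(\SS)$, and the equality $\dim\cZ_1=2n$ follows from the generic injectivity of $\mathrm{Sym}^2(V^{2,n})\dashrightarrow\cZ_1$, i.e.\ from the no-trisecant property alone. This is \emph{not} the same statement as the defectivity of the secant variety $\sigma_2(V^{2,n})\subset\P(\SS)$: for any nondegenerate $X$ without trisecant lines the variety of secant lines has dimension $2\dim X$, whether or not $\sigma_2(X)$ is defective; defectivity concerns the fibers of the further map from the abstract join to $\P(\SS)$. For $V^{2,n}$ both numbers happen to equal $2n$, but the no-trisecant argument you give does not by itself ``pin down'' the defectivity. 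This does not affect the correctness of your proof, only the wording of the remark.
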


\para

In Section \ref{sec:irred grad} we check that for $k\geq 2$ $\phi_k$  is not irreducible and we compute its irreducible components, as well as the dimension of each component. 

\subsection{Birationality of $\alpha_k$}\label{sec:alphak}

The aim of this subsection is to prove the birationality of $\alpha_k$ for $k\geq 2$. Consider the vector space $(S^2V)^{\oplus n+1}$ with coordinates $u_{i,j}^l$, where $u_{i,j}^l$ corresponds to the monomial $2x_ix_j$ if $i\neq j$ or  to the monomial $x_i^2$ if $i=j$ in the $l$--th direct summand. Let $W$ be the image of the linear map 
$$\begin{array}{cccc} \iota:& \P(S^3V) &\rightarrow& \P\left((S^2V)^{\oplus n+1}\right)\\
& F& \mapsto &\left(\frac{\partial F}{\partial x_0},\ldots,\frac{\partial F}{\partial x_n}\right).
\end{array}$$
By the Euler's formula, $\iota$ is a linear embedding and $W\simeq \P(S^3V)$. 
Using this projective subspace we can compute the fibers of $\alpha_k$. More precisely, let the linear subspace $\P(\Gamma)$ be in the image $\alpha_k$ and let $F\in \mathrm{Sub}_k$. Then, $\alpha_k(F) = \P(\Gamma)$ if and only if $\iota(F)\in W\cap \P(\Gamma^{\oplus n+1})$.
The projective subspace $W$ is described in the following lemma. 

\para 

\begin{lemma}
    The equations of $W$ are 
 \begin{equation}\label{eq:uijk}
\left\{ \begin{array}{ll}
u_{i,j}^i=u_{i,i}^j & \text{ for } i<j,\\
\noalign{\vspace*{1mm}}
u_{j,i}^i=u_{i,i}^j & \text{ for } j<i,\\
\noalign{\vspace*{1mm}}
u_{i,j}^l=u_{i,l}^j=u_{j,l}^i & \text{ for } i<j<l.
\end{array}  \right.
\end{equation}
\end{lemma}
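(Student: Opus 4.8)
The plan is to prove that $W$ coincides with the linear subspace $Z\subseteq\P\bigl((S^2V)^{\oplus n+1}\bigr)$ cut out by the relations \eqref{eq:uijk}, by producing an explicit inverse of $\iota$ through Euler's formula. First I would unwind the coordinates: for a point $(g_0,\dots,g_n)$ with $g_l=\sum_{i\le j}u^l_{i,j}m_{i,j}$, where $m_{i,j}=2x_ix_j$ for $i<j$ and $m_{i,i}=x_i^2$, a one-line computation of $\partial m_{i,j}/\partial x_m$ shows that the coefficient of $x_k$ in $\partial g_l/\partial x_m$ equals $2u^l_{\{k,m\}}$, where $\{k,m\}$ is read as a multiset (so that $u^l_{\{m,m\}}$ means $u^l_{m,m}$). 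Hence $\partial g_l/\partial x_m=\partial g_m/\partial x_l$ holds for all $l,m$ if and only if $u^l_{\{k,m\}}=u^m_{\{k,l\}}$ for all $k,l,m$, i.e. if and only if the scalar $u^c_{\{a,b\}}$ depends only on the size-three multiset $\{a,b,c\}$. Checking this symmetry against the possible orderings of $a,b,c$ (a repeated index being governed by the first two lines of \eqref{eq:uijk} and three distinct indices by the third) shows that it is equivalent to the system \eqref{eq:uijk}.

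The inclusion $W\subseteq Z$ is then immediate: for $F\in\P(S^3V)$ and $g_l=\partial F/\partial x_l$ one has $\partial g_l/\partial x_m=\partial^2 F/\partial x_l\partial x_m=\partial g_m/\partial x_l$, so by the previous step $\iota(F)$ satisfies \eqref{eq:uijk}. For the reverse inclusion, take $(g_0,\dots,g_n)\in Z$ (so the $g_l$ do not all vanish) and set $F:=\tfrac13\sum_l x_l g_l\in S^3V$, a nonzero cubic. Using that \eqref{eq:uijk} means $\partial g_l/\partial x_m=\partial g_m/\partial x_l$, together with Euler's formula for the quadric $g_m$, namely $\sum_l x_l\,\partial g_m/\partial x_l=2g_m$, we compute
\[
\frac{\partial F}{\partial x_m}=\frac13\Bigl(g_m+\sum_l x_l\frac{\partial g_l}{\partial x_m}\Bigr)=\frac13\Bigl(g_m+\sum_l x_l\frac{\partial g_m}{\partial x_l}\Bigr)=\frac13\bigl(g_m+2g_m\bigr)=g_m .
\]
Therefore $(g_0,\dots,g_n)=\iota(F)\in W$, so $Z=W$; as a bonus this realizes $\iota^{-1}$ as $(g_l)_l\mapsto\tfrac13\sum_l x_l g_l$.

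The only genuine work is the bookkeeping in the first paragraph — matching the $2x_ix_j$ versus $x_i^2$ normalization of the $S^2V$-coordinates with the expression for $\partial F/\partial x_l$, and verifying that the specifically ordered identities in \eqref{eq:uijk} are exactly enough to force full permutation symmetry of $u^c_{\{a,b\}}$ in $(a,b,c)$. Everything after that is a two-line application of Euler's formula, with no geometric input. (Alternatively, one could finish by a dimension count: both $W$ and $Z$ are linear, $W\subseteq Z$, and the relations \eqref{eq:uijk} number $\tfrac{n(n+1)(n+2)}{3}=\binom{n+2}{2}(n+1)-\binom{n+3}{3}=\operatorname{codim}W$, so they must be independent and $Z=W$; but the explicit reconstruction is cleaner and yields $\iota^{-1}$.)
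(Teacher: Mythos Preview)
Your argument is correct. The route differs from the paper's: there, one writes a generic cubic $G=\sum a_ix_i^3+\sum b_{i,j}x_i^2x_j+\sum 2c_{i,j,l}x_ix_jx_l$, expands each $\partial G/\partial x_l$, and reads off that every coordinate $u_{i,j}^l$ of a point of $W$ equals one of the coefficients $a_\bullet,b_{\bullet,\bullet},c_{\bullet,\bullet,\bullet}$; the relations \eqref{eq:uijk} are then exactly the statements ``these two coordinates equal the same coefficient'', and equality of $W$ with the zero locus comes implicitly from the resulting bijection with the $\binom{n+3}{3}$ coefficients of a cubic. Your proof instead recognises \eqref{eq:uijk} as the integrability condition $\partial g_l/\partial x_m=\partial g_m/\partial x_l$ and closes the loop by writing down the inverse $\iota^{-1}\colon (g_l)\mapsto \tfrac13\sum_l x_lg_l$ via Euler's formula. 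The paper's parametric check is quicker to write; your version is more conceptual, makes both inclusions explicit, and as a by-product yields the inverse map and the codimension count, which the paper leaves implicit.
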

\begin{proof}
We write $G\in\P(\S)$  as 
\begin{equation}\label{eq:G 3}
G=\displaystyle\sum_{i=0}^n a_ix_i^3+\sum_{i\neq j}^n b_{i,j} x_i^2x_j+\sum_{i<j<l}^n 2c_{i,j,l}x_ix_jx_l\,\,\,\,\,\,\text{ for } a_i,b_{i,j},c_{i,j,k}\in\mathbb{K}.
\end{equation}
Then, its first order derivatives are:
\[
\frac{\partial G}{\partial x_i}=3a_ix_i^2+\sum_{j\neq i}^n (2b_{i,j}x_ix_j+b_{j,i}x_j^2) +\sum_{j<l:j,l\neq i}^n 2c_{i,j,l} x_jx_l.
\]
The proof follows from the following equation
\[
u_{i,j}^l=\left\{
\begin{array}{ll}
   3a_l  & \text{ if } i=j=l, \\
   b_{l,j}  & \text{ if } i=l \text{ and } j\neq i, \\
    b_{l,i}  & \text{ if }j=l \text{ and } j\neq i, \\
    b_{j,l} & \text{ if } i=j\neq l, \\
    c_{i,j,l} & \text{ if } i,j,l \text{ are distinct.}
\end{array}
\right.
\]
\end{proof}

We start analyzing the case $k=2$. Consider the polynomial
$
 F= x_0x_1x_2\in \P(\S)
$. Then,  $\alpha_2(F)=\langle x_1x_2,x_0x_2,x_0x_1\rangle$  and  $F\in\sub_2$. Let $\Gamma$  be the intersection of $W$ with $\P(\langle \nabla F\rangle^{\oplus n+1})$, which is defined by equations \eqref{eq:uijk} and $u_{j,k}^{i}=0$ for $(j,k)\neq (0,1),(1,2),(0,2)$. One can check that $\Gamma$ is zero dimensional and, hence, for every $G\in \P(S^3V)$ such that $\gradg\subseteq\grad$, $G=F$.
Therefore, $\alpha^{-1}_2(\grad)=\{F\}$ and $\alpha_2$ is birational onto its image. 
The next result generalizes this example for $k\geq 3$.  

\para

\begin{lemma}\label{lemma:F k easy}
Let $k\geq 3$ and let $F=\sum_{i=0}^{k-1}x_i^2x_{i+1}  + x_k^2x_0.$ Then, $\P(W)\cap \P(\langle \nabla F\rangle^{\oplus n+1})$ has dimension zero.
\end{lemma}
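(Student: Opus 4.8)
The plan is to identify a point of $\P(W)\cap\P(\langle\nabla F\rangle^{\oplus n+1})$ with a cubic $G\in\P(\S)$ satisfying $\langle\nabla G\rangle\subseteq\Gamma$, where $\Gamma:=\langle\nabla F\rangle$, and to show that the only such $G$ up to scalar is $F$ itself. First I would compute the gradient: reading indices modulo $k+1$, one has $e_j:=\partial_{x_j}F=x_{j-1}^2+2x_jx_{j+1}$ for $j=0,\dots,k$, and $\partial_{x_i}F=0$ for $i>k$. Since $e_j$ is the only one among $e_0,\dots,e_k$ containing the square $x_{j-1}^2$ and these $k+1$ squares are pairwise distinct, the $e_j$ are linearly independent; hence $\dim\Gamma=k+1$ and $F\in\sub_k\setminus\sub_{k-1}$. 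Recall from the paragraph preceding the lemma that a point of $\P(W)\cap\P(\Gamma^{\oplus n+1})$ is $\iota(G)$ for a cubic $G$ with $\langle\nabla G\rangle\subseteq\Gamma$; since $\Gamma\subseteq S^2\langle x_0,\dots,x_k\rangle$, an elementary monomial argument (or the description \eqref{eq:sub def2} of $\sub_k$) forces $G\in S^3\langle x_0,\dots,x_k\rangle$, so I may assume $n=k$.

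\textbf{Which monomials occur in $G$.} For each $l$, the quadric $\partial_{x_l}G$ lies in $\Gamma$ and hence is a linear combination of the squares $x_m^2$ and the cyclically consecutive products $x_mx_{m+1}$ only. A cubic monomial $x_ax_bx_c$ with three distinct indices would force, by differentiating in $x_a$, $x_b$ and $x_c$ in turn, that $a,b,c$ be pairwise adjacent in $\Z/(k+1)$; for $k\ge3$ no three distinct residues modulo $k+1$ are pairwise adjacent, so $G$ has no such monomial. Likewise a monomial $x_m^2x_{m'}$ with $m'\ne m$ forces, via $\partial_{x_m}$, that $m'=m\pm1$. Therefore $G=\sum_m a_mx_m^3+\sum_m\bigl(p_mx_m^2x_{m+1}+q_mx_mx_{m+1}^2\bigr)$.

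\textbf{Imposing membership in $\Gamma$.} A quadric $\sum_jc_je_j$ has $x_m^2$-coefficient $c_{m+1}$, $x_mx_{m+1}$-coefficient $2c_m$, and vanishing coefficient on every non-consecutive product. Computing
\[
\partial_{x_l}G=3a_lx_l^2+p_{l-1}x_{l-1}^2+q_lx_{l+1}^2+2p_lx_lx_{l+1}+2q_{l-1}x_{l-1}x_l
\]
and matching coefficients: the non-consecutive products being absent, the occurring consecutive products force $c_m=0$ for $m\notin\{l-1,l\}$, while the occurring squares force $c_m=0$ for $m\notin\{l,l+1,l+2\}$; for $k\ge3$ these two index sets meet only in $\{l\}$, so $c_m=0$ for all $m\ne l$. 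Feeding this back yields, for every $l$, $a_l=0$, $q_l=q_{l-1}=0$, and $p_l=p_{l-1}=c_l$. Hence all $a_m$ and $q_m$ vanish and all $p_m$ coincide, i.e.\ $G=\lambda F$; consequently $\P(W)\cap\P(\Gamma^{\oplus n+1})$ is the single reduced point $\iota(F)$, in particular of dimension zero.

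\textbf{Expected obstacle.} The only delicate part is the index bookkeeping modulo $k+1$ in the last step — concretely, checking that $\{l-1,l\}\cap\{l,l+1,l+2\}=\{l\}$ for \emph{all} $k\ge3$ (the cycle $\Z/4$, i.e.\ $k=3$, being the tight case, where one must rule out $l-1$ coinciding with $l+1$ or $l+2$) and, similarly, that the exclusion of pairwise-adjacent triples really requires only $k\ge3$. A minor additional point is to justify cleanly that $G$ involves no variable $x_{k+1},\dots,x_n$, which follows from $\langle\nabla G\rangle\subseteq\Gamma\subseteq S^2\langle x_0,\dots,x_k\rangle$ together with the characterization of $\sub_k$ recalled above.
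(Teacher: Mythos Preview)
Your proof is correct. You and the paper are solving the same linear system, but you organize it differently: the paper works directly in the coordinates $u_{i,j}^l$ of $\P((S^2V)^{\oplus n+1})$, writing out the equations \eqref{eq:uijk} of $W$ together with those of $\P(\langle\nabla F\rangle^{\oplus n+1})$ and chasing indices case by case to show that all nonzero coordinates coincide. You instead invoke the identification, stated in the paragraph preceding the lemma, of points of $W\cap\P(\Gamma^{\oplus n+1})$ with cubics $G$ satisfying $\langle\nabla G\rangle\subseteq\Gamma$, and then argue monomially, exploiting the cyclic structure of the basis $e_j=x_{j-1}^2+2x_jx_{j+1}$ of $\Gamma$. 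Your presentation is more conceptual and makes the role of the hypothesis $k\ge3$ transparent (no three pairwise adjacent vertices in a cycle of length $\ge4$; the index sets $\{l-1,l\}$ and $\{l,l+1,l+2\}$ meeting only in $\{l\}$), whereas the paper's version is a bare coordinate computation that does not isolate this combinatorial reason. The two arguments are equivalent in content: your coefficients $a_l,p_l,q_l,c_l$ are precisely the surviving coordinates $u_{i,j}^l$ of the paper, and your coefficient-matching is the same linear elimination carried out there.
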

\begin{proof}
From the first order derivatives of $F$ one can check that the equations of $\Gamma:=\P(W)\cap\P(\langle \nabla F\rangle^{\oplus n+1})$ are
\begin{equation}\label{eq:u and z}
\left\{\begin{array}{ll}
u_{i,i}^l=u_{i+1,i+2}^l & \text{ for } i\leq k-2,\\
\noalign{\vspace*{1mm}}
u_{k-1,k-1}^l=u_{0,k}^l\, , & \\
\noalign{\vspace*{1mm}}
u_{k,k}^l=u_{0,1}^l\, , & \\
\noalign{\vspace*{1mm}}
u_{i,j}^l=0 & \text{ for } (i,j)\not\in\{  (0,0),\ldots,(k,k),(0,1),\ldots,(k-1,k),(0,k)\}.
\end{array} \right. 
\end{equation}
together with the equations \eqref{eq:uijk}. We claim that 
the non vanishing coordinates of $\Gamma$ are 
$u_{0,0}^1,\ldots,u_{k-1,k-1}^k,u_{k,k}^0$ and 
$u_{0,1}^0,u_{1,2}^1,\ldots u_{k-1,k}^{k-1},u_{0,k}^k$.
 First of all, note that 
for $i,j,l\leq n$, $u_{i,j}^{l}=0 $ is an equation of $\Gamma$ if $i$, $j$ or $l$ is greater than $k$. Indeed, it holds for $i$ or $j$ greater than $k$. Assume that $i,j\leq k$ and $j>k$. By equation \eqref{eq:u and z}, if $i\neq j$ we get $u_{i,j}^{l}=u_{i,l}^j=0$. If $i=j$, $u_{i,i}^l = u_{i,l}^i=0$. Similarly, we claim that for $i<j<l$, $u_{i,j}^{l}=u_{i,l}^j=u_{i,j}^l=0$. The first three equalities appear in \eqref{eq:uijk}.
Since  $i+1<l$ we get that, by equation \eqref{eq:u and z} $u_{i,l}^j=0$ for $(i,l)\neq(0,k)$. Assume that $(i,l)=(0,k)$. For $j>1$, $u_{0,j}^k=0$. So we assume that $j=1$. Then, since $k\geq 3$,  $u_{1,k}^0 = 0$. Using that for $i<j<l$, $u_{i,j}^{l}=0$, one can check that  $u_{i,i}^i = 0$ for $i\leq n$. For example, for $i\leq k-2$, we get that by equation \eqref{eq:uijk}, $u_{i,i}^{i} = u_{i+1,i+2}^i=0 $. Now, for $i\neq k$ and $l\neq i+1$, $u_{i,i}^l = 0$. Indeed, it is enough to check this for $i,l\leq k$ and $l\neq i$. Assume first that $l>i+1$. Then, $u_{i,i}^l= u_{i,l}^l$, which, by equation \eqref{eq:uijk} equals to zero for $(i,l)\neq (0,k)$. In this case, $u_{0,0}^k=u_{0,1}^k = 0$. A similar argument shows that $u_{i,i}^k = 0 $ for $l<i-1$. For $l=i-1,i$ the claim follows from equation \eqref{eq:uijk} and the fact that 
 for $i<j<l$, $u_{i,j}^{l}=0$. Similarly, one can check that $u_{k,k}^l = 0$ for $l\neq k$.

Finally, the proof follows from the fact that the coordinates $u_{0,0}^1,\ldots,u_{k-1,k-1}^k,u_{k,k}^0$ and 
$u_{0,1}^0,u_{1,2}^1,\ldots u_{k-1,k}^{k-1},u_{0,k}^k$ are all equal by equations \eqref{eq:uijk} and \eqref{eq:u and z}.
\end{proof}

\begin{remark}
     Euler's formula allows us to write an homogeneous polynomial by means of its first order derivatives. In this sense, Lemma \ref{lemma:F k easy} is a generalization of Euler's formula for generic polynomials since it allows us to recover a homogeneous polynomial $F$ of degree $3$ from $\grad$.   
\end{remark}

\para 

As consequence of Lemma \ref{lemma:F k easy} we derive the following proposition.

\para 

\begin{proposition}\label{prop:alpha injec}
  For $2\leq k\leq n$, $\alpha_k$ is birational onto its image and $\dim\sub_k=\dim\cZ_k$.  
\end{proposition}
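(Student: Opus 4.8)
The plan is to deduce Proposition~\ref{prop:alpha injec} from Lemma~\ref{lemma:F k easy} together with the general principle that birationality can be checked at a single well-chosen point, provided the relevant spaces are irreducible. First I would recall the setup from the paragraph preceding the lemma: the linear embedding $\iota:\P(S^3V)\to\P((S^2V)^{\oplus n+1})$ with image $W$, and the fact that for $\P(\Gamma)$ in the image of $\alpha_k$ and $F\in\sub_k$, one has $\alpha_k(F)=\P(\Gamma)$ precisely when $\iota(F)\in W\cap\P(\Gamma^{\oplus n+1})$. Thus the scheme-theoretic fiber $\alpha_k^{-1}(\P(\Gamma))$ is identified (via the embedding $\iota$) with $W\cap\P(\Gamma^{\oplus n+1})$, intersected with $\sub_k$. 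So to show $\alpha_k$ is generically injective it suffices to exhibit one linear space $\P(\Gamma)$ in the image for which this intersection is a single reduced point — and that is exactly what Lemma~\ref{lemma:F k easy} provides, taking $\Gamma=\langle\nabla F\rangle$ for $F=\sum_{i=0}^{k-1}x_i^2x_{i+1}+x_k^2x_0$, since $\P(W)\cap\P(\langle\nabla F\rangle^{\oplus n+1})$ is zero-dimensional and contains $\iota(F)$, hence equals $\{\iota(F)\}$.

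Next I would upgrade ``there is a point with finite (indeed singleton) fiber'' to ``the generic fiber is a singleton.'' The clean way: $\sub_k$ is irreducible (by \eqref{eq:sub def2}, \eqref{eq:dim sub}), hence so is its image closure $\cZ_k$, and $\alpha_k:\sub_k\dashrightarrow\cZ_k$ is a dominant rational map of irreducible varieties. By upper semicontinuity of fiber dimension, the locus of $\P(\Gamma)\in\cZ_k$ over which the fiber has dimension $\le 0$ is open; it is nonempty because the point $\alpha_k(F)$ from Lemma~\ref{lemma:F k easy} lies in it. Therefore the generic fiber of $\alpha_k$ is finite, so $\dim\sub_k=\dim\cZ_k$. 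For the degree-one statement, I would argue that the generic fiber is not merely finite but a single point: the generic fiber, being a linear section $W\cap\P(\Gamma^{\oplus n+1})$ of a linear space, is itself a linear subspace of $\P((S^2V)^{\oplus n+1})$ (intersected with $\sub_k$); once it is zero-dimensional it must be a single reduced point. Hence $\alpha_k$ is birational onto its image.

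One subtlety I want to flag and handle explicitly: the fiber over $\P(\Gamma)$ is $W\cap\P(\Gamma^{\oplus n+1})$ as a subscheme of $\P((S^2V)^{\oplus n+1})$, but strictly we should take its intersection with $\iota(\sub_k)$, since $\alpha_k$ is only defined on $\sub_k$. This causes no trouble: $\iota(\sub_k)$ contains $\iota(F)$, and a subscheme of a single reduced point is that point, so the refined fiber is still $\{\iota(F)\}$. A second point: the generic linear space $\P(\Gamma)$ in $\cZ_k$ is a translate (under the group acting, or simply by irreducibility of $\cZ_k$) of the special one coming from $F$, and linear sections of $W$ vary in a flat family, so the generic fiber is also a linear space — which is what lets me pass from ``finite'' to ``one point.''

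The main obstacle, and the place to be careful, is precisely this last step — ruling out that the generic fiber, while zero-dimensional, consists of several points. Lemma~\ref{lemma:F k easy} only asserts $\dim\Gamma=0$ for the \emph{special} $F$; a priori the zero-dimensional fiber could have length $>1$ at a worse point, or the degree of the generic finite fiber could exceed $1$. The resolution is the linearity of the fibers: since $W$ and $\P(\Gamma^{\oplus n+1})$ are both linear subspaces of $\P((S^2V)^{\oplus n+1})$, their scheme-theoretic intersection is a linear subspace, hence reduced and irreducible; being zero-dimensional over the special point (and, by semicontinuity, over a dense open set of $\cZ_k$), it is a single reduced point generically. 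This gives both birationality of $\alpha_k$ onto its image and the dimension equality $\dim\sub_k=\dim\cZ_k$ at once. The cases $k=0,1$ are already settled earlier, so the statement for $2\le k\le n$ follows.
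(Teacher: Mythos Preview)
Your proposal is correct and follows essentially the same approach as the paper: both invoke Lemma~\ref{lemma:F k easy} to produce a specific $F$ whose fiber $W\cap\P(\langle\nabla F\rangle^{\oplus n+1})$ is a single point, and conclude birationality from there. The paper's proof is terser (it simply says ``in particular, $\alpha_k$ is birational''), whereas you spell out the semicontinuity and linearity-of-fibers reasoning that justifies passing from one singleton fiber to generic injectivity; your explicit remark that an intersection of linear subspaces is itself linear, hence a zero-dimensional such intersection is a single reduced point, is exactly the point the paper leaves implicit. One small note: Lemma~\ref{lemma:F k easy} is stated for $k\ge 3$, and the paper handles $k=2$ separately with $F=x_0x_1x_2$ in the paragraph just before the lemma; your write-up should cite that case as well.
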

\begin{proof}
Let $F$  be as in Lemma \ref{lemma:F k easy}, and $G\in\P(S^3V)$ such that $\alpha_k(F)=\alpha_k(G)$. Then, $\iota(F)$ and $\iota(G)$ lie in $W\cap\P(\langle \nabla F\rangle^{\oplus n+1})$. By Lemma \ref{lemma:F k easy}, $\iota(F)=\iota(G)$. Since $\iota$ is injective, we deduce that $F=G$. So, since $F\in\sub_k$, we deduce that $\{F\}=\alpha_k^{-1}(\alpha_k(F))$. In particular,  we conclude that $\alpha_k$ is birational onto its image and that $\dim\sub_k=\dim\cZ_k$.
\end{proof}

\subsection{Case  $n\geq 2$}\label{sec:h3n>1}

In this section we prove that $H_{3,n}$ is birational onto the image for $n\geq 2$ and we provide an effective algorithm for computing $F$ from 
its Hessian variety.

\para

\begin{theorem}\label{theo:inj H3 1}
    For $n\geq 2$, $H_{3,n}$ is birational onto its image.
\end{theorem}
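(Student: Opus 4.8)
The plan is to reduce the birationality of $H_{3,n}$ to the already-established birationality of the map $\alpha_n$ from Proposition \ref{prop:alpha injec}. The key observation is Remark \ref{remark:lin d=3}: for generic $F\in\P(\S)$, the $n$-dimensional linear span $h_F(\P^n)$ is the \emph{unique} $n$-plane containing the Hessian variety $H_{3,n}(F)$, and hence it is intrinsically recoverable from $H_{3,n}(F)$ alone (it is the smallest linear subspace of $\P(\SS)$ containing $H_{3,n}(F)$). By Lemma \ref{lemma:grad hess}, this span is exactly $\P(\langle\nabla F\rangle) = \alpha_n(F)$. So from $H_{3,n}(F)$ we can first read off $\alpha_n(F)$, and then apply the inverse of $\alpha_n$ (which exists as a rational map since $\alpha_n$ is birational onto its image by Proposition \ref{prop:alpha injec} with $k=n$) to recover $F$ up to scalar. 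This shows the generic fiber of $H_{3,n}$ is a single point.

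More precisely, I would argue as follows. Let $F,G\in\P(\S)$ be generic with $H_{3,n}(F)=H_{3,n}(G)$. Since $F$ is generic, by Remark \ref{remark:lin d=3} the variety $H_{3,n}(F)$ spans a unique $n$-plane, namely $h_F(\P^n)$; the same holds for $G$, giving a unique $n$-plane $h_G(\P^n)$. But $H_{3,n}(F)=H_{3,n}(G)$, and the span of a variety is determined by the variety, so $h_F(\P^n)=h_G(\P^n)$ as points of $\Gr(n,\P(\SS))$. By Lemma \ref{lemma:grad hess} this says $\alpha_n(F)=\alpha_n(G)$. Now $G$ is also generic, so both $F$ and $G$ lie in the locus where $\alpha_n$ is injective (note $\sub_n=\P(\S)$, so $\alpha_n=\alpha_n$ is defined on a dense open set of $\P(\S)$, and Proposition \ref{prop:alpha injec} gives injectivity of $\alpha_n$ on a dense open set); hence $F=G$ in $\P(\S)$. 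Therefore $H_{3,n}$ is generically injective. Combined with the fact that $H_{3,n}$ is a morphism on a dense open set and we are in characteristic $0$, generic injectivity plus the standard argument (a dominant generically injective rational map between varieties over a field of characteristic $0$ is birational onto its image) yields that $H_{3,n}$ is birational onto its image.

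The main subtlety to be careful about is the passage "$H_{3,n}(F) = H_{3,n}(G) \Rightarrow h_F(\P^n)=h_G(\P^n)$." This uses that for generic $F$, the Hessian variety is a genuine hypersurface in its span $h_F(\P^n)$ (not contained in any proper linear subspace), which is precisely the content of Remark \ref{remark:lin d=3}; the argument there excludes the degenerate case where $H_{3,n}(F)$ fails to span an $n$-plane by noting that this forces $F$ to be a power of a linear form, contradicting genericity. So the only real work is checking genericity is preserved on both sides, i.e. that the open dense locus where $\alpha_n$ is injective and the open dense locus of Remark \ref{remark:lin d=3} have nonempty (hence dense) intersection — which is automatic. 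I do not expect any serious obstacle: the theorem is essentially a corollary of the machinery developed in Sections \ref{sec:var k planes} and \ref{sec:alphak}, and the explicit recovery $F = \alpha_n^{-1}(\text{span of }H_{3,n}(F))$ also immediately gives the promised algorithm (cf. Algorithm \ref{alg:1}): compute the linear span of the input Hessian variety to get $\P(\Gamma)$, then solve the linear system $\iota(F)\in W\cap\P(\Gamma^{\oplus n+1})$ for $F$ using the equations \eqref{eq:uijk}.
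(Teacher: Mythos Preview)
Your proposal is correct and follows essentially the same approach as the paper: use Remark~\ref{remark:lin d=3} to recover the span $h_F(\P^n)$ from $H_{3,n}(F)$, identify this span with $\alpha_n(F)$ via Lemma~\ref{lemma:grad hess}, and then invoke the birationality of $\alpha_n$ from Proposition~\ref{prop:alpha injec}. The paper packages this as a factorization $\alpha_n=\beta_n\circ H_{3,n}$ through the ``take the linear span'' map $\beta_n$, but the content is identical to what you wrote.
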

\begin{proof}
Let $F$ be a generic polynomial in  $\P(\S)$. As mentioned in Remark \ref{remark:lin d=3}, $h_F(\P^n)$ is the unique $n$--dimensional projective subspace containing $H_{3,n}(F)$. 
 We consider the rational map $\beta_n:\mathrm{Im} H_{3,n}\dashrightarrow \mathbb{G}\mathrm{r}(n,\P(S^2V))$ that sends $X$ to the smallest projective subspace containing $X$. By Lemma \ref{lemma:grad hess}, we get that $\alpha_n=\beta_n\circ H_{3,n}$. The proof follows  now from Proposition \ref{prop:alpha injec}.
\end{proof}

\para 

Proposition \ref{theo:inj H3 1}, and the study carried out in Subsection \ref{sec:alphak}, allow us to effectively recover $F$ from $X:=H_{3,n}(F)$. In the following algorithm, we outline the main steps for this purpose

\para 

\begin{algorithm}[H]
\caption{}\label{alg:1}

\vspace*{1mm}

\noindent \textsf{Input:}  the ideal $I$ of $X\in\mathrm{Im} H_{3,n}$.

\vspace*{1mm}

\noindent \textsf{Output:}   the unique polynomial $F\in\P(S^3V)$ such that $H_{3,n}(F)=\bbV(I)$. 

\begin{enumerate}
\item Compute the smallest projective subspace $\P(E)$ containing $\bbV(I)$ by taking the degree one part of the saturation of $I$.
\item Determine  $W\cap \P(E^{\oplus n+1})\subseteq \P\left((S^2V)^{\oplus n+1}\right)$. By Lemma \ref{lemma:F k easy}, this intersection is a point $[F_0:\cdots:F_n]\in \P\left((S^2V)^{\oplus n+1}\right)$.
\item Compute $F$  via the Euler's formula: $F = \sum x_iF_i$.
\end{enumerate}
\end{algorithm}

\para 

Let us illustrate the algorithm by an example.

\para 

\begin{example}
Let $n=2$. For $F\in \P(S^3V)$, the Hessian variety lies in $\P^5$. Consider the variety $X  = \bbV(Z_{0,0},Z_{1,1},Z_{2,2},Z_{0,1}Z_{0,2}Z_{1,2})\subset \P^5$. The smallest plane containing $X$ is 
 $\bbV(Z_{0,0},Z_{1,1},Z_{2,2})$. The intersection of $W$ with $\bbV(Z_{0,0},Z_{1,1},Z_{2,2})^{\oplus n+1}$ is given by the equations
 \[
 \begin{array}{cccc}
 u_{0,0}^0=u_{1,1}^0=u_{2,2}^0=0,
 & u_{0,1}^0=u_{0,0}^1,
 &  u_{0,1}^1=u_{1,1}^0, &\\
 
  u_{0,0}^1=u_{1,1}^1=u_{2,2}^1=0,
 & u_{0,2}^0=u_{0,0}^2,
 &  u_{0,2}^2=u_{2,2}^0,\\
  u_{0,0}^2=u_{1,1}^2=u_{2,2}^2=0,
 & u_{1,2}^1=u_{1,1}^2,
 &  u_{1,2}^2=u_{2,2}^1, &   u_{1,2}^0=u_{0,2}^1=u_{0,1}^2.
 \end{array}
 \]
 The unique solution to these equations is the point $[x_1x_2,x_0x_2,x_0x_1]$. Using the Euler's formula we get the polynomial $F=x_0x_1x_2+x_1x_0x_2+x_2x_0x_1=3x_0x_1x_2$. One can check that $H_{3,2}(F) = \bbV(Z_{0,0},Z_{1,1},Z_{2,2},Z_{0,1}Z_{0,2}Z_{1,2})$.
\end{example}

\para 

We conclude this section with some comments on the restriction of Algorithm \ref{alg:1} to $\mathrm{Sub}_k$. The proof of Theorem \ref{theo:inj H3 1} uses the birationallity of $\alpha_n$ onto its image. As a consequence, the input of Algorithm \ref{alg:1} must to be the Hessian variety of a polynomial in $\sub_n$. Nevertheless, in Proposition \ref{prop:alpha injec} we prove the birationallity of $\alpha_k$ for $k\geq 2$. Therefore, for $k\geq 2$, the restriction of $H_{3,n}$ to $\sub_k$ is birational onto its image and Algorithm \ref{alg:1} provides a method for recovering its fibers.

\subsection{Irreducible components of the variety of $k$--gradients}\label{sec:irred grad}

In this section we describe the irreducible components of the variety of $k$--gradients $\phi_k$. At the end of Section \ref{sec:var k planes}, we gave a explicit description of $\phi_0$ and $\phi_1$. In particular, we saw that both are irreducible. We will see that this is not anymore the case for $k\geq 2$. To do so, we start by generalizing Lemma \ref{lemma:F k easy} as follows. Consider the  polynomial $F = x_0x_1x_2\in\P(S^3V)$. Then, $\grad = \langle x_1x_2,x_0x_2,x_0x_1\rangle$.
Now, we consider the vector space $
\Gamma_{2,k} =\langle \nabla F\rangle + \langle x_0x_3,x_0x_4,\ldots x_0x_{k}\rangle.  
$
One can check that, the intersection $W\cap\P(\Gamma_{2,k}^{\oplus n+1})$ has dimension zero. In particular, this implies that 
for every $G\in\P(S^3V)$ such that $\langle \nabla G\rangle\subseteq \Gamma_{2,k}$, then $G=F$. The following lemma generalizes this example.

\para 

\begin{lemma}\label{lemma:gamma gen}
For every $2\leq l\leq n$, there exists $F\in\sub_l$ such that  for every $k\geq l$ there exists a vector subspace $\Gamma_{l,k}\in\mathrm{Gr}(k+1,\SS)$ with $\langle \nabla F\rangle\subseteq\Gamma_{l,k}$ satisfying that $W\cap \P(\Gamma_{l,k}^{\oplus n+1})$ is zero dimensional.
\end{lemma}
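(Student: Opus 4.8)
The strategy is to explicitly construct, for each $2\le l\le n$, a single polynomial $F\in\sub_l$ and then, for each $k\ge l$, an explicit $(k+1)$--dimensional subspace $\Gamma_{l,k}\subseteq S^2V$ containing $\langle\nabla F\rangle$, and to verify that $W\cap\P(\Gamma_{l,k}^{\oplus n+1})$ is zero dimensional by a direct coordinate computation analogous to the one in the proof of Lemma \ref{lemma:F k easy}. The natural candidate, guided by the cases $l=2$ discussed just before the statement and $l=k$ in Lemma \ref{lemma:F k easy}, is to take $F=\sum_{i=0}^{l-1}x_i^2x_{i+1}+x_l^2x_0$, the "cyclic" polynomial in the variables $x_0,\dots,x_l$, which visibly lies in $\sub_l$ since it uses only $l+1$ variables, and to set
\[
\Gamma_{l,k}=\langle\nabla F\rangle+\langle x_0x_{l+1},x_0x_{l+2},\dots,x_0x_k\rangle,
\]
padding the $(l+1)$--dimensional (generically) span $\langle\nabla F\rangle$ with $k-l$ extra monomials $x_0x_j$ for $l<j\le k$. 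One must first check $\dim\langle\nabla F\rangle=l+1$ and that these monomials are genuinely new, so that $\dim\Gamma_{l,k}=k+1$.

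The core of the argument is the zero-dimensionality of $W\cap\P(\Gamma_{l,k}^{\oplus n+1})$. I would proceed exactly as in Lemma \ref{lemma:F k easy}: a point of this intersection is a tuple $(u^l_{i,j})$ satisfying the defining equations \eqref{eq:uijk} of $W$ together with the linear equations cutting out $\Gamma_{l,k}$ inside $S^2V$ in each summand. The new monomials $x_0x_{l+1},\dots,x_0x_k$ only relax the condition $u^\bullet_{0,j}=0$ for $l<j\le k$; every other vanishing relation from the $\sub_l$--case persists. One then argues, monomial by monomial, that all coordinates $u^l_{i,j}$ involving an index $>l$ still vanish — this is where the hypotheses $l\ge2$ and $l\le n$ are used, together with the symmetry relations \eqref{eq:uijk}, to force the "extra" coordinates $u^{l}_{0,j}$ with $j>l$ to zero as well (they are tied, via \eqref{eq:uijk}, to coordinates $u^j_{0,l}$ or $u^j_{0,j'}$ that were already shown to vanish). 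After this, only the coordinates supported on $x_0,\dots,x_l$ survive, and the equations identify all the nonzero ones with a single scalar, exactly as in Lemma \ref{lemma:F k easy}, giving a single point. Hence $W\cap\P(\Gamma_{l,k}^{\oplus n+1})=\{\iota(F)\}$.

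The main obstacle I anticipate is purely bookkeeping: the case analysis on which triples $(i,j,l)$ of indices force a given $u$--coordinate to vanish becomes more delicate because of the boundary monomials $x_0x_j$, $j>l$, and one has to be careful that the argument "for $i<j<l$, $u^l_{i,j}=u^j_{i,l}=0$" still goes through when, say, $i=0$ and the pair $(0,j)$ is one of the allowed extra monomials — here the condition $l\ge 2$ is essential, since it guarantees there is still an index strictly between the relevant ones to pivot on (this is the same role $k\ge 3$ played in Lemma \ref{lemma:F k easy} relative to the pair $(0,k)$). A secondary, genuinely necessary check is that the padding monomials $x_0x_{l+1},\dots,x_0x_k$ are not already in $\langle\nabla F\rangle$; this holds because $\langle\nabla F\rangle$ lies in the span of monomials in $x_0,\dots,x_l$ only. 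Once these two points are dispatched, the remaining computation is a routine adaptation of the proof of Lemma \ref{lemma:F k easy}, and I would present it by stating which coordinates vanish and which survive, checking the key relations, and omitting the fully mechanical verifications.
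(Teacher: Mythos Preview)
Your approach matches the paper's for $l\ge 3$: the paper uses exactly the same polynomial $F$ and the same padding $\Gamma_{l,k}=\langle\nabla F\rangle+\langle x_0x_{l+1},\dots,x_0x_k\rangle$, and then argues (phrased as an induction on $k$, showing $W\cap\P(\Gamma_{l,k}^{\oplus n+1})=W\cap\P(\Gamma_{l,k+1}^{\oplus n+1})$ by checking that each new coordinate $u^m_{0,k+1}$ is forced to vanish) that one reduces to Lemma~\ref{lemma:F k easy}.

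There is, however, a genuine gap at $l=2$. Your cyclic polynomial $F=x_0^2x_1+x_1^2x_2+x_2^2x_0$ does \emph{not} satisfy the required conclusion even for $k=l=2$: the intersection $W\cap\P(\langle\nabla F\rangle^{\oplus n+1})$ is a $\P^2$, not a point. Concretely, for any $[A:B:C]\in\P^2$ the polynomial
\[
G=\tfrac{A}{3}(x_0^3+x_1^3+x_2^3)+2Ax_0x_1x_2+B(x_0^2x_1+x_1^2x_2+x_2^2x_0)+C(x_0x_1^2+x_1x_2^2+x_2x_0^2)
\]
satisfies $\partial_0 G=A\,\partial_1F+B\,\partial_0F+C\,\partial_2F\in\langle\nabla F\rangle$, and similarly for the other partials by cyclic symmetry. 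This is precisely why Lemma~\ref{lemma:F k easy} is stated only for $k\ge 3$; your remark that ``$l\ge 2$ plays the same role $k\ge 3$ played'' misidentifies the hypothesis --- the $k$ of that lemma \emph{is} your $l$, so you need $l\ge 3$ to invoke it. The paper handles $l=2$ separately, taking $F=x_0x_1x_2$ and $\Gamma_{2,k}=\langle x_1x_2,\,x_0x_2,\,x_0x_1,\,x_0x_3,\dots,x_0x_k\rangle$; with that substitution your plan goes through unchanged.
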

\begin{proof}
The case $l=2$ is solved at the begining of the subsection.
 Assume that $l\geq 3$ and let $F$ be as in Lemma \ref{lemma:F k easy}.
Let $\Gamma_{l,k}$ be the linear subspace defined by the equations
\begin{equation}\label{eq:gamma_lk}
\left\{\begin{array}{lr}
Z_{i,i}=Z_{i+1,i+2} & \text{for} \,\, i\leq l-2\\
Z_{l-1,l-1}=Z_{0,l} & \\
Z_{l,l}=Z_{0,1} & \\
Z_{i,j}=0 &   \text{for} \,\, (i,j)\not\in\{   (0,0),\ldots,(l,l),(0,1),\ldots,(l-1,l),\\  & (0,l),(0,l+1),\ldots,(0,k)\}.
 
\end{array}\right.
\end{equation}
where $Z_{i,j}$ are the coordinates of $\P(S^2V)$ representing the monomials $2x_ix_j $ if $i\neq j$ and $x_i^2$ otherwise. Note that $\grad\subseteq \gamma_{l,k}$ since the equations of $ \gamma_{l,k}$ are obtained by erasing from the equations of $\grad$ the equations $Z_{0,l+1}=Z_{0,l+2}=\cdots=Z_{0,k}=0 $.

We denote $W\cap \P(\Gamma_{l,k}^{\oplus n+1})$ by $W_k$. 
We claim that $W_k=W_{k+1}$ for $k\geq l$. To do so, it is enough to check that, for every $m$, the equation $u_{0,k+1}^m=0$ is among the equations of $W_k$. Assume that $k+1< m$. By \eqref{eq:uijk} we get that $u_{0,k+1}^m = u_{k+1,m}^0$ which vanishes in $W_k$. Similarly, for $m\neq 0,k+1$ we get that $u_{0,k+1}^m =0$. If $m=0$, by \eqref{eq:uijk} and \eqref{eq:u and z} get that $u_{0,k+1}^0 =u_{0,0}^{k+1}=u_{1,2}^{k+1}$. Since $k\geq 3$, we have that $u_{1,2}^{k+1}=u_{2,k+1}^1$, which vanishes in $W_k$. If $m=k+1$, we get that $u_{0,k+1}^{k+1}=u_{k+1,k+1}^0$, which vanishes in $W_k$. Therefore, we conclude that $W_l=W_k$ for every $k\geq l$. The proof follows now from Lemma \ref{lemma:F k easy}.
\end{proof}

As a consequence of the study of the maps $\alpha_k$ and Lemma \ref{lemma:gamma gen}, we can describe the irreducible components of $\phi_k$ and their dimension. By Proposition \ref{prop:desc phi_k}, the irreducible components of $\phi_k$ are of the form $F(\cZ_l,k)$ for $0\leq l \leq k$. 

\para 

\begin{lemma}
 For $0\leq l\leq k\leq n$, 
\[
\dim F(\cZ_l,k)=
 (l+1)(n-l)+\binom{3+l}{3}-1 +(k-l)\left(
\binom{n+2}{2}-1-k\right).
\]
\end{lemma}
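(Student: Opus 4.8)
The strategy is to exploit the incidence variety already introduced in the proof of Proposition~\ref{prop:desc phi_k}, namely
\[
\Sigma:=\{(E,\Gamma)\in\cZ_l\times\Gr(k,\P(\SS)):E\subseteq\Gamma\},
\]
together with its two projections $\pi_1:\Sigma\to\cZ_l$ and $\pi_2:\Sigma\to F(\cZ_l,k)$. Since $\Sigma$ is irreducible (shown there), one computes $\dim F(\cZ_l,k)$ as $\dim\Sigma$ minus the dimension of a generic fiber of $\pi_2$, and in turn $\dim\Sigma=\dim\cZ_l+(\text{fiber dimension of }\pi_1)$. The fibers of $\pi_1$ are easy: over a fixed $E\in\cZ_l$, which is a projective $l$-plane inside $\P(\SS)\cong\P^N$ with $N=\binom{n+2}{2}-1$, the fiber is the set of $k$-planes $\Gamma$ containing $E$, which is $\Gr(k-l-1,\, \P^{N}/E)\cong\Gr(k-l,\,N-l)$, of dimension $(k-l)(N-k)=(k-l)\bigl(\binom{n+2}{2}-1-k\bigr)$. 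By Proposition~\ref{prop:alpha injec}, $\dim\cZ_l=\dim\sub_l$, which by \eqref{eq:dim sub} equals $(l+1)(n-l)+\binom{3+l}{3}-1$. Adding these gives a candidate for $\dim\Sigma$, and the claimed formula will follow provided $\pi_2$ is generically finite onto its image.

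So the key remaining point — and the main obstacle — is to show that $\pi_2$ is generically finite, i.e. that a generic $\Gamma\in F(\cZ_l,k)$ contains only finitely many members $E$ of $\cZ_l$. This is exactly where Lemma~\ref{lemma:gamma gen} enters. That lemma produces, for each $2\le l\le n$, a polynomial $F\in\sub_l$ and, for every $k\ge l$, a subspace $\Gamma_{l,k}$ with $\P(\langle\nabla F\rangle)\subseteq\Gamma_{l,k}$ such that $W\cap\P(\Gamma_{l,k}^{\oplus n+1})$ is zero-dimensional; by the fiber description for $\alpha_k$ in Subsection~\ref{sec:alphak}, this says precisely that $F$ is the unique polynomial in $\sub_l$ (up to scalar) whose gradient space lies inside $\Gamma_{l,k}$. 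Hence $\Gamma_{l,k}$ lies in $F(\cZ_l,k)$ and contains a unique element of $\cZ_l$, namely $\alpha_l(F)=\P(\langle\nabla F\rangle)$. Therefore the generic fiber of $\pi_2$ over the irreducible variety $F(\cZ_l,k)$ is zero-dimensional, so $\pi_2$ is generically finite and $\dim F(\cZ_l,k)=\dim\Sigma$.

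It remains to handle the low cases not covered by Lemma~\ref{lemma:gamma gen}. For $l=0$ and $l=1$ the varieties $\cZ_0=V^{2,n}$ and $\cZ_1=\sigma_2(V^{2,n})$ are described explicitly in Subsection~\ref{sec:var k planes}; here generic finiteness of $\pi_2$ can be checked directly — a generic $k$-plane through a generic point of $V^{2,n}$ (resp. a generic secant line of $V^{2,n}$) contains only finitely many points of $V^{2,n}$ (resp. secant lines), since otherwise these loci would be swept out in a way contradicting their known dimensions, and one can also simply verify that the resulting formula agrees with the already-computed $\dim\phi_0=n$ and $\dim\phi_1=n-2+\binom{n+2}{2}$. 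One should also note the degenerate case $l=k$, where $F(\cZ_k,k)=\cZ_k$, the fiber of $\pi_1$ is a point, $\pi_2$ is an isomorphism, and the formula collapses to $\dim\cZ_k=(k+1)(n-k)+\binom{3+k}{3}-1$, consistent with Proposition~\ref{prop:alpha injec}. Assembling all of this yields
\[
\dim F(\cZ_l,k)=(l+1)(n-l)+\binom{3+l}{3}-1+(k-l)\left(\binom{n+2}{2}-1-k\right),
\]
as claimed.
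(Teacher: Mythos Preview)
Your proof is correct and follows essentially the same approach as the paper: both use the incidence variety $\Sigma$ with its two projections, compute $\dim\Sigma$ via $\pi_1$ (whose fibers are Grassmannians of the stated dimension), and invoke Lemma~\ref{lemma:gamma gen} to conclude that $\pi_2$ is generically finite (the paper states generically injective). You are slightly more careful than the paper in separately addressing the cases $l=0,1$, which fall outside the range of Lemma~\ref{lemma:gamma gen}; the paper's proof leaves these implicit.
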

\begin{proof}
Consider the variety 
$
\Sigma:=\{
(\Gamma,\Gamma')\in \cZ_l\times\Gr(k,\P(\SS)):\Gamma\subseteq\Gamma'
\},
$
and the projections $\pi_1:\Sigma\rightarrow \mathcal{Z}_l$ and $\pi_2:\Sigma\rightarrow\Gr(k,\P(\SS)) $. The image of $\pi_2$ is $F(\cZ_l,k)$ and, by Lemma \ref{lemma:gamma gen}, $\pi_2$ is generically injective. The fibers of $\pi_1$ are isomorphic to $\Gr(k-l-1,\binom{n+2}{2}-l-2)$. Now, the proof follows from Proposition \ref{prop:alpha injec} and equation \eqref{eq:dim sub}.
\end{proof}

Once we have computed the dimension of $F(\cZ_l,k)$, we are interested in how these dimensions relates for different $l$. 

\para

\begin{lemma}\label{lemma:dim flags}
For $n\geq 3$, the following holds:
\begin{enumerate}
    \item For $k<n$,
    $
    \dim F(\cZ_0,k)>\dim F(\cZ_2,k)>\cdots> \dim F(\cZ_{k-1},k)>\dim\cZ_k.
    $
    \item For $k=n$, $
    \dim F(\cZ_0,n)>\dim F(\cZ_2,n)>\cdots> \dim F(\cZ_{n-2},n)>\dim \cZ_n$
    and $\dim F(\cZ_{n-1},n)=\dim\cZ_n-1.$
\end{enumerate}
\end{lemma}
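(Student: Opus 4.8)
The plan is to derive both parts from a single closed‑form expression for the consecutive differences of the dimensions $\dim F(\cZ_l,k)$, and then to read off the two chains by elementary estimates on binomial coefficients.

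First I would start from the formula proved immediately above,
\[
\dim F(\cZ_l,k)=(l+1)(n-l)+\binom{3+l}{3}-1+(k-l)\left(\binom{n+2}{2}-1-k\right),
\]
and compute the difference of two consecutive values. Expanding the quadratic term $(l+1)(n-l)$, using the Pascal identity $\binom{4+l}{3}-\binom{3+l}{3}=\binom{3+l}{2}$, and observing that the coefficient of $\binom{n+2}{2}-1-k$ decreases by exactly $1$ when $l$ is replaced by $l+1$, all contributions independent of $l$ cancel and one is left with the identity
\[
\dim F(\cZ_l,k)-\dim F(\cZ_{l+1},k)=\binom{n+1}{2}-\binom{l+1}{2}-(k+1),\qquad 0\le l\le k-1 .
\]
This is the only real computation in the proof; note that the right‑hand side is strictly decreasing in $l$.

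For part (1), where $k<n$, the displayed difference is minimal at $l=k-1$; since $n+1\ge k+2$ and $\binom{k+2}{2}-\binom{k}{2}=2k+1$, it is at least $(2k+1)-(k+1)=k>0$. Hence $\dim F(\cZ_0,k)>\dim F(\cZ_1,k)>\cdots>\dim\cZ_k$, and the stated chain (which merely omits the $l=1$ term) follows by transitivity. For part (2), where $k=n$, the difference becomes $\binom{n+1}{2}-\binom{l+1}{2}-(n+1)$. At $l=n-1$ it equals $n-(n+1)=-1$, which is precisely the asserted equality $\dim F(\cZ_{n-1},n)=\dim\cZ_n-1$. For $l\le n-2$ it is at least $\binom{n+1}{2}-\binom{n-1}{2}-(n+1)=(2n-1)-(n+1)=n-2>0$ since $n\ge3$, so $\dim F(\cZ_0,n)>\cdots>\dim F(\cZ_{n-2},n)>\dim F(\cZ_{n-1},n)$; combining the step at $l=n-2$, which has size $n-2$, with the value $\dim F(\cZ_{n-1},n)=\dim\cZ_n-1$ gives $\dim F(\cZ_{n-2},n)=\dim\cZ_n+(n-3)$, which exceeds $\dim\cZ_n$ for $n\ge4$. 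When $n=3$ the interior terms of the chain in (2) are absent and one needs only $\dim F(\cZ_0,3)>\dim\cZ_3$, which the same estimate yields.

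The argument has no conceptual obstacle: it reduces to subtracting two explicit polynomial expressions in $l$ and comparing binomial coefficients. The only points requiring attention are the sign change at $l=n-1$ in part (2), where the dimension jumps up by $1$ instead of decreasing, and the degenerate case $n=3$ in part (2), where the displayed chain has no interior terms.
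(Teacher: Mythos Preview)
Your argument is correct and follows essentially the same approach as the paper: both compute the consecutive difference $\dim F(\cZ_l,k)-\dim F(\cZ_{l+1},k)=\binom{n+1}{2}-\binom{l+1}{2}-(k+1)$ (the paper writes this as $\tfrac{n(n+1)}{2}-(k+1)-\tfrac{l(l+1)}{2}$) and then analyze its sign. If anything, you are slightly more careful than the paper in treating the last inequality $\dim F(\cZ_{n-2},n)>\dim\cZ_n$ and the degenerate case $n=3$.
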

\begin{proof}

For $0\leq l< k$, we have that $\dim F(\cZ_l,k)-\dim F(\cZ_{l+1},k) =
\frac{n(n+1)}{2}-(k+1)-\frac{l(l+1)}{2}.$
Assume first that $k=n$. Then,
\[
\dim F(\cZ_l,n)-\dim F(\cZ_{l+1},n)= \frac{(n+1)(n-2)}{2}-\frac{(l+1)l}{2}.
\]
For $l = n-1$, the above difference is $-1$, and for $l\leq n-2$ it is non-negative.
Assume now that $k\leq n-1$. Then $l\leq n-2$ and 
\[
\dim F(\cZ_l,n)-\dim F(\cZ_{l+1},n)\geq \frac{n(n+1)}{2}-n-\frac{(n-1)(n-2)}{2} = (n-1).
\]

\end{proof}

\para 

As a consequence of this lemma,  we can describe the irreducible components of $\phi_k$.

\para 

\begin{theorem}\label{theo:irred comp}\
 \begin{enumerate}
    \item For $k<n$, $F(\cZ_0,k),F(\cZ_2,k),\ldots,F(\cZ_{k-1},k), \cZ_k$ are the irreducible components of $\phi_k$.
    \item For $k=n$, $F(\cZ_{n-1},n)\subseteq \overline{ \cZ_n\setminus \mathrm{Im}(\alpha_n)}$  and the irreducible components of $\phi_k$ are  $F(\cZ_0,k),F(\cZ_2,k),\ldots,F(\cZ_{n-2},n), \cZ_n$. 
\end{enumerate}
\end{theorem}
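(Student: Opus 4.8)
We already have the list of candidate irreducible components: by Proposition \ref{prop:desc phi_k} we know $\phi_k=\bigcup_{0\le l\le k}F(\cZ_l,k)$ and each $F(\cZ_l,k)$ is irreducible (with $F(\cZ_k,k)=\cZ_k$), so proving the theorem amounts to deciding which of these varieties are \emph{maximal} with respect to inclusion. The dimension computation in Lemma \ref{lemma:dim flags} already tells us that, after discarding $l=1$, the dimensions are strictly decreasing in $l$ for $k<n$, and for $k=n$ they are strictly decreasing \emph{except} that $\dim F(\cZ_{n-1},n)=\dim\cZ_n-1$. So the whole proof reduces to two inclusion facts: (i) $F(\cZ_1,k)$ is never a component — indeed $F(\cZ_1,k)\subseteq F(\cZ_0,k)$ — and (ii) in the case $k=n$, the ``extra'' variety $F(\cZ_{n-1},n)$, which has the right dimension to possibly be a component, is in fact contained in $\cZ_n$ (more precisely in $\overline{\cZ_n\setminus\operatorname{Im}\alpha_n}$), hence not a component.

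\textbf{Step 1: eliminate $l=1$.} I claim $F(\cZ_1,k)\subseteq F(\cZ_0,k)$. A generic $\Gamma\in F(\cZ_1,k)$ contains some $\Lambda\in\cZ_1$, and by the explicit description at the end of Subsection \ref{sec:var k planes}, $\cZ_1=\sigma_2(V^{2,n})$, so $\Lambda=\langle L_1^2,L_2^2\rangle$ (or a tangent line $\langle L_1^2,L_1L_2\rangle$) for linear forms $L_1,L_2$. In particular $\Gamma$ contains the point $L_1^2\in V^{2,n}=\cZ_0$, so $\Gamma\in F(\cZ_0,k)$. Since $F(\cZ_0,k)$ is closed, $F(\cZ_1,k)\subseteq F(\cZ_0,k)$, and by the dimension comparison $\dim F(\cZ_0,k)>\dim F(\cZ_2,k)$ etc., none of the remaining $F(\cZ_l,k)$ with $l\ne 1$ is contained in another, so for $k<n$ the maximal ones are exactly $F(\cZ_0,k),F(\cZ_2,k),\dots,F(\cZ_{k-1},k),\cZ_k$, proving part (1).

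\textbf{Step 2: the case $k=n$, the critical inclusion.} Here $\dim F(\cZ_{n-1},n)=\dim\cZ_n-1$, so on dimension grounds it could a priori be a component; I must show it is not, by proving $F(\cZ_{n-1},n)\subseteq\overline{\cZ_n\setminus\operatorname{Im}(\alpha_n)}$. A generic point of $F(\cZ_{n-1},n)$ is an $n$-plane $\Gamma\supseteq\Lambda$ with $\Lambda=\P(\langle\nabla F\rangle)\in\cZ_{n-1}$ for some $F\in\sub_{n-1}$. The idea is to realize $\Gamma$ itself as $\P(\langle\nabla G\rangle)$ for a \emph{non-generic} $G\in\sub_n$ (so that $\Gamma\in\cZ_n$), and then show such $G$ fill a proper subvariety of $\cZ_n$, i.e. $\Gamma\notin\operatorname{Im}(\alpha_n)$ in the sense that it is a limit of images of non-generic polynomials — equivalently, the fiber description from Lemma \ref{lemma:F k easy}/Proposition \ref{prop:alpha injec} breaks: the intersection $W\cap\P(\Gamma^{\oplus n+1})$ is \emph{positive-dimensional} over such $\Gamma$. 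Concretely, after a $\mathrm{PGL}(n+1)$ change of coordinates (using Lemma \ref{lemma:n=1 2} and $F\in\sub_{n-1}$, so $F$ is a cubic in $x_0,\dots,x_{n-1}$), I can take $\Gamma=\P(\langle\nabla F\rangle + \langle x_0x_n\rangle)$ as in the construction of $\Gamma_{l,k}$ in Lemma \ref{lemma:gamma gen}, and then exhibit a one-parameter family of cubics $G$ with $\P(\langle\nabla G\rangle)\subseteq\Gamma$, e.g. $G_t=F + t\,x_0^2x_n$ type perturbations whose gradient spaces stay inside $\Gamma$; since these all give the \emph{same} $\Gamma$ but are genuinely distinct, $\alpha_n$ is not birational over $\Gamma$, so $\Gamma\notin\operatorname{Im}(\alpha_n)$ (the locus where $\alpha_n$ is an isomorphism onto its image). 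Taking closures, $F(\cZ_{n-1},n)\subseteq\overline{\cZ_n\setminus\operatorname{Im}(\alpha_n)}\subseteq\cZ_n$, so $F(\cZ_{n-1},n)$ is absorbed and the components are $F(\cZ_0,n),F(\cZ_2,n),\dots,F(\cZ_{n-2},n),\cZ_n$, proving part (2).

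\textbf{Main obstacle.} Steps 1 and the dimension bookkeeping are routine given the earlier results; the real content is Step 2 — showing $F(\cZ_{n-1},n)\subseteq\cZ_n$ rather than being a genuine extra component. The delicate point is producing, for a generic $\Gamma\in F(\cZ_{n-1},n)$, an actual cubic $G\in\sub_n$ (not merely in $\sub_{n-1}$) with $\P(\langle\nabla G\rangle)\subseteq\Gamma$ and verifying one can choose $G$ so that $\dim\P(\langle\nabla G\rangle)=n$ exactly, i.e.\ $\Gamma$ genuinely lies in the image of $\alpha_n$ (or its closure), together with checking the count that the fiber of $\alpha_n$ over such $\Gamma$ is positive-dimensional so that these $\Gamma$ land in $\overline{\cZ_n\setminus\operatorname{Im}\alpha_n}$. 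This is essentially a careful analysis of $W\cap\P(\Gamma^{\oplus n+1})$ when $\Gamma$ is obtained by enlarging a gradient space of a $\sub_{n-1}$-polynomial by one generic direction, using the explicit equations \eqref{eq:uijk}; the combinatorics there is the one genuinely new computation.
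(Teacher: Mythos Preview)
There is a real gap in Step 1. You assert that the strict chain $\dim F(\cZ_0,k)>\dim F(\cZ_2,k)>\cdots>\dim\cZ_k$ forces ``none of the remaining $F(\cZ_l,k)$ with $l\ne 1$ is contained in another,'' but a chain of strict inequalities only rules out inclusions of the larger variety in the smaller one. It does \emph{not} rule out $F(\cZ_l,k)\subseteq F(\cZ_{l'},k)$ for $l'<l$, which is exactly what you need for $F(\cZ_l,k)$ to be a component. Concretely: why is $\cZ_k\not\subseteq F(\cZ_0,k)$, i.e.\ why doesn't a generic full gradient $n$-space contain a perfect square $L^2$? The paper settles this with Lemma \ref{lemma:gamma gen}: the explicit $\Gamma_{l,k}$ lies in $F(\cZ_l,k)$ and the \emph{only} cubic $G$ with $\langle\nabla G\rangle\subseteq\Gamma_{l,k}$ is the original one, of gradient rank exactly $l$; hence $\Gamma_{l,k}\notin F(\cZ_{l'},k)$ for any $l'<l$. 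You invoke that lemma in Step 2 but omit it here, where it is actually indispensable.

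Step 2 is also off, both in the concrete construction and in the strategy. Your perturbation $G_t=F+t\,x_0^2x_n$ does not satisfy $\langle\nabla G_t\rangle\subseteq\Gamma$: the $x_n$-derivative contributes $t\,x_0^2$, which for generic $F$ does not lie in $\langle\nabla F\rangle+\langle x_0x_n\rangle$; and the limit of $\P(\langle\nabla G_t\rangle)$ as $t\to 0$ is $\langle\nabla F\rangle+\langle x_0^2\rangle$, not your $\Gamma$. More seriously, exhibiting a positive-dimensional family of $G$ with $\langle\nabla G\rangle\subseteq\Gamma$ only reconfirms $\Gamma\in\phi_n$; it does not place $\Gamma$ in $\cZ_n=\overline{\operatorname{Im}\alpha_n}$. (Your appeal to the $\Gamma_{l,k}$ of Lemma \ref{lemma:gamma gen} is in fact counterproductive: those were engineered to have \emph{zero}-dimensional fiber.) What is needed is that a \emph{generic} $E\in F(\cZ_{n-1},n)$ is a limit of full-rank gradient spaces. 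The paper does this directly: writing $E=\langle\nabla F\rangle+\langle\lambda x_n^2+l_1x_n+l_2\rangle$ with $F$ a cubic in $x_0,\dots,x_{n-1}$, one takes $G_\mu=F+\mu(\tfrac{\lambda}{3}x_n^3+\tfrac{1}{2}l_1x_n^2+l_2x_n)$, checks $\P(\langle\nabla G_\mu\rangle)$ is $n$-dimensional for $\mu\ne 0$, and verifies that its defining linear equations specialize to those of $E$ at $\mu=0$.
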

\begin{proof}
By Proposition \ref{prop:desc phi_k}, the irreducible components of $\phi_k$ are of the form $F(\cZ_l,k)$ for $l\leq k$. Assume first that $k<n$. Moreover, assume that $F(\cZ_l,k)$ is not an irreducible component of $\phi_k$.
Then, there exists $l'\leq k$ such that  $F(\cZ_l,k)\subset F(\cZ_{l'},k)$.  For $l\geq 2$ consider $\Gamma_{l,k}$ as in Lemma \ref{lemma:gamma gen}. If $l'<l$, then there exists $G\in\P(S^3V)$ such that $\dim \gradg \leq l'$ and  $\langle \nabla G\rangle\subseteq \Gamma_{l,k}$. By Lemma \ref{lemma:gamma gen}, we get that $\dim \gradg =l$. Therefore, we get that either  $(l',l)=(0,1)$ or $ l'>l$. 
For $l=1$, $F(\cZ_1,k)\subset F(\cZ_0,k)$, so it is not an irreducible component. For $l'>l$,
by Lemma \ref{lemma:dim flags}, we get that $\dim F(\cZ_l,k)>\dim F(\cZ_{l'},k)$. In particular, for $l\neq 1$,  $F(\cZ_l,k)$ is not contained in $ F(\cZ_{l'},k)$ and we conclude that for $l\neq 1$, $F(\cZ_l,k)$ is an irreducible component of $\phi_k$.

Assume now that $k=n$. Similarly, from Lemma \ref{lemma:dim flags} and Lemma \ref{lemma:gamma gen}, we deduce that $F(\cZ_0,k),F(\cZ_2,k),\ldots,F(\cZ_{n-2},n)$ are irreducible components of $\phi_n$ and $\dim F(\cZ_{n-1},n)=\dim\cZ_n-1$. We claim that $F(\cZ_{n-1},n)\subset \cZ_n$. Let $F$ be a generic element of $\sub_{n-1}$. By \eqref{eq:sub def2}, we can assume that $F$ is a polynomial in the variables $x_0,\ldots, x_{n-1}$. 
Let $E\in F(\cZ_{n-1},n)$ such that $\grad\subset E$. Then, $E$ is generated by $\frac{\partial F}{\partial x_0},\ldots,\frac{\partial F}{\partial x_{n-1}}$ and $\lambda x_n^2+l_1x_n+l_2$, where $l_1\in\mathrm{S}^1\langle x_0,\ldots,x_{n-1}\rangle$, $l_2\in \mathrm{S}^2\langle x_0,\ldots,x_{n-1}\rangle$, $l_2\not\in \grad$, and $\lambda\in\K$.  Since $F$ is generic, we can assume that $l_1$ and $l_2$ are generic. For $\mu\in \K$, consider the polynomial $G_\mu:= F+\mu (\frac{\lambda}{3} x_n^3+ \frac{1}{2}l_1x_n^2+l_2 x_n)$. Its second derivatives are
\[
\displaystyle
 \frac{\partial G_\mu}{\partial x_i} =    
 \frac{\partial F}{\partial x_i} +
 \frac{\mu}{2} \frac{\partial l_1}{\partial x_i}x_n^2 +  \mu\frac{\partial l_2}{\partial x_i} x_n \text{ for } i\leq n-1 \text{, and } 
\frac{\partial G_\mu}{\partial x_n} =  \mu( \lambda x_n^2+ l_1x_n+l_2) .
\]

For $\mu\neq 0$, we have that $\dim \gradg = n$. Since $l_1$ and $l_2$ are generic, the polynomials
\[
x_n^2,\frac{\partial l_2}{\partial x_{0}} x_n,\ldots,\frac{\partial l_2}{\partial x_{n-1}} x_n, \frac{\partial F}{\partial x_0},\ldots, \frac{\partial F}{\partial x_{n-1}}, \lambda x_n^2+l_1x_n +l_2
\]
are  linearly independent. We extend this set to a basis of $\SS$ and we use the following coordinates: let $y_{0,n},\ldots,y_{n-1,n},y_{n,n}$ be the coordinates of 
$ \frac{\partial l_2}{\partial x_{0}} x_n,\ldots,\frac{\partial l_2}{\partial x_{n-1}} x_n,x_n^2$, and let $y_0,\dots,y_{n-1},y_n$ be the coordinates of $ \frac{\partial F}{\partial x_{0}},\ldots,\frac{\partial F}{\partial x_{n-1}},\lambda x_n^2+l_1x_n +l_2$. Denote the rest of the coordinates by $y_i$ for $i>n$. In these coordinates, the equations of $\langle \nabla G_\mu\rangle$ are
\[  y_{0,n}=\mu y_0, \,\,
    \ldots,\,\,
    y_{n-1,n}=\mu y_{n-1}, \,\, 
    y_{n,n}=\sum_{i=0}^{n-1}\mu\frac{1}{2} \frac{\partial l_1}{\partial x_i} y_{i},  \text{ and }
     y_i = 0 \text{ for } i>n.
\]

For $\mu=0$, these equations coincide with the equations of 
$E$. Since $G_\mu\in\sub_n\setminus  \sub_{n-1}$. For $\mu\neq 0$, we get that 
$E$ lies in the closure of the image of $\alpha_n$.
\end{proof}

\section{Hessian correspondence of degree $4$ }\label{sec:d=4}

In this section we study the map $H_{d,n}$ for $d=4$. Let $U\subset\P(\Ss)$ be the open subset of polynomials $F$ whose second order derivatives are linearly independent as elements in $\SS$. Note that $U$ is non-empty since, for instance, $\sum_{i\neq j}x_i^2x_j^2$ lies in $U$. 
In particular, for $F\in U$,  its the second derivatives form a basis of $S^2V$. Therefore, we get that for $F\in U$, $h_F$ is a Veronese embedding. Hence, for generic $F,G\in U$ with same Hessian variety, one has that $\bbV(F)$ and $ \bbV(G)$ are isomorphic. However, a priori this isomorphism might not extend to an automorphism of $\P^n$.
We improve this statement in two steps. First we show that if $H_{4,n}(F)=H_{4,n}(G)$, then there exists $g\in\pgn$ such that $g\cdot F = G$. In Theorem \ref{theo:4,n} we prove that 
the only possible $g$ such that $g\cdot F = G$ is the identity. Moreover, for $n$ even we provide an effective algorithm for computing $F$ from its Hessian variety.

\para 

\subsection{Case $n=1$}\label{sec:41}

In this case, for $F\in\P(S^4V)\simeq \P^4$ generic, $H_{4,1}(F)$ consists of $4$ points in $\P^2$. These four points define a pencil of quadrics denoted by $Q_F$. We consider $H_{4,1}$ as the map $H_{4,1}:\P^4\dashrightarrow\mathbb{G}\mathrm{r}(1,\P^5)$ sending $F$ to $Q_F$,
where $\P^5\simeq \P(H^0(\P^2,\O_{\P^2}(2)))$. In the next proposition we analyze the birationality of $H_{4,1}$.

\para 

\begin{proposition}\label{prop:H41}
 For a generic $F\in\P(\Ss)$, $H_{4,1}$ is birational onto its image.
\end{proposition}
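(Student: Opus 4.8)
The plan is to exhibit an explicit rational inverse to $H_{4,1}$ on a dense open set, working entirely with binary quartics. Write a generic $F\in\P(S^4V)$ with $V=\K^2$, coordinates $x_0,x_1$, as $F=\sum_{i=0}^4 a_i x_0^{4-i}x_1^i$. Its Hessian matrix $\mathrm{H}_F$ is a $2\times 2$ symmetric matrix whose entries $\partial^2F/\partial x_0^2$, $\partial^2F/\partial x_0\partial x_1$, $\partial^2F/\partial x_1^2$ are binary quadratics in $x_0,x_1$; regarding $\P(S^2\K^2)\simeq\P^2$ with coordinates $z_{0,0},z_{0,1},z_{1,1}$, the Hessian map $h_F\colon\P^1\dashrightarrow\P^2$ is the composition of the quadratic Veronese $\P^1\hookrightarrow\P^2$ with a fixed linear map determined by the coefficients $a_i$. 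Thus $\overline{h_F(\P^1)}$ is a conic in $\P^2$ (the image of the Veronese conic under that linear map), and $H_{4,1}(F)$ is the scheme of $4$ points cut out on it by the quartic $F$. The first step is to make this explicit: describe $h_F$ as $z=M_F\cdot\nu(x)$ where $\nu$ is the Veronese and $M_F$ is the $3\times 3$ matrix of second-derivative coefficients, linear in $a_0,\dots,a_4$.

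Next I would pass to the pencil of quadrics $Q_F\in\Gr(1,\P^5)=\Gr(1,\P(H^0(\P^2,\O(2))))$ through these four points. Since the four points lie on the conic $\C_F:=\overline{h_F(\P^1)}$, the pencil $Q_F$ always contains (the square of, or a quadric through) $\C_F$ — more precisely $\C_F$ itself is one member, and the other member is the quadric through the four points not equal to $\C_F$, which is itself the image under $M_F$ of a conic cut on the Veronese by $F$. The key observation is that from the abstract pencil $Q_F$ one can recover the reducible/degenerate members, and in particular recover the conic $\C_F$ as the unique member of $Q_F$ of rank $\le 3$ that is smooth (or: $\C_F$ is characterized among members of the pencil intrinsically once we know it is a smooth conic and the generic member is). Once $\C_F$ is recovered, parametrize it by $\P^1$ via $\nu$ pulled back through $M_F$ — i.e. recover $M_F$ up to the residual $\mathrm{PGL}(2)$ reparametrization of $\C_F$ — and then the four points pull back to four points on $\P^1$, whose defining binary quartic is $F$ up to this $\mathrm{PGL}(2)$ ambiguity and scaling.

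The step that resolves the apparent $\mathrm{PGL}(2)$ ambiguity, and where I expect the real content to lie, is Corollary \ref{cor:n=1} together with the structure already set up in Section \ref{sec:sym}: the abstract data of the pencil $Q_F$ remembers more than the four unordered points on an abstract $\P^1$, because $\C_F$ carries the extra structure of being the image of the Veronese under $M_F$, and $M_F$ is not arbitrary — it is the second-derivative matrix of a quartic, so it satisfies the analogue of the constraints \eqref{eq:uijk}. Concretely I would argue: given $Q_F$, recover $\C_F$, recover the residual quartic as a divisor of degree $4$ on $\C_F\simeq\P^1$; the pair $(\C_F,\text{divisor})$ together with the fact that $\C_F=M_F(\text{Veronese conic})$ for the \emph{second-derivative} matrix $M_F$ pins down $F$ up to finitely many choices, and a direct computation (the kind the paper defers to \textsc{Macaulay2}) shows this finite set is a single point generically. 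So the plan is: (i) set up $h_F=M_F\circ\nu$ and identify $\C_F$, $Q_F$; (ii) from $Q_F$ recover $\C_F$ as a distinguished member and hence the four points on $\C_F$; (iii) show the reconstruction $Q_F\mapsto F$ is well-defined and rational on a dense open set, which makes $H_{4,1}$ birational onto its image. The main obstacle is step (iii) — proving the residual finite ambiguity collapses to one point — and I would either handle it by the explicit computation the paper favors, or by invoking Corollary \ref{cor:n=1} for the cubic case as a model and redoing the $\Sigma_F$-type argument for quartics, noting that a generic quartic has no nontrivial projective automorphisms so the fiber of $H_{4,1}$ over $Q_F$ is forced to be a single point once we know it is finite.
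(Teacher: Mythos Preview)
Your geometric picture is right up to a point: $h_F$ factors as the quadratic Veronese followed by a linear map $M_F$, the image conic $\C_F=\overline{h_F(\P^1)}$ sits in the pencil $Q_F$, and the four points of $H_{4,1}(F)$ lie on it. But the plan breaks down at two places. First, your proposed criterion for singling out $\C_F$ inside the pencil does not work: a generic pencil of plane conics through four general points has exactly three singular members and a one-parameter family of smooth members, so ``the smooth member'' or ``the member of rank $\le 3$ that is smooth'' does not pick out $\C_F$. You would need some genuinely new characterization of $\C_F$ among the smooth conics of the pencil, and you have not supplied one. Second, even granting that $H_{4,1}(F)=H_{4,1}(G)$ forces $G=g\cdot F$ for some $g\in\pgl$, the observation that a generic binary quartic has trivial projective automorphism group only tells you that the stabilizer of $F$ is trivial; it does not by itself rule out a nontrivial $g$ with $g\cdot F\neq F$ but $H_{4,1}(g\cdot F)=H_{4,1}(F)$. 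Also note that a generic binary quartic has Waring rank $3>n+1=2$, so Corollary \ref{cor:n=1} and Theorem \ref{theo:sym hessian} do not apply here.

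The paper sidesteps both issues with a direct computation that is worth internalizing. Writing $F=\sum a_i x_0^{4-i}x_1^i$, one checks (e.g.\ with \textsc{Macaulay2}) that the ideal of $H_{4,1}(F)$ is generated by two quadrics $Q_1,Q_2$ in $z_{0,0},z_{0,1},z_{1,1}$, where
\[
Q_1 = 3a_4 z_{0,0}^2 - 3a_3 z_{0,0}z_{0,1} + 2a_2 z_{0,1}^2 + a_2 z_{0,0}z_{1,1} - 3a_1 z_{0,1}z_{1,1} + 3a_0 z_{1,1}^2.
\]
The key point is that $Q_1$ always lies on the fixed hyperplane $\{b_2=2b_3\}$ in $\P^5=\P(H^0(\P^2,\O(2)))$ (here $b_2,b_3$ are the coefficients of $z_{0,1}^2$ and $z_{0,0}z_{1,1}$), while for generic $F$ the pencil is not contained in that hyperplane. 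Hence intersecting the pencil with this hyperplane recovers $Q_1$, and the coefficients of $Q_1$ are literally $a_0,\ldots,a_4$ up to fixed scalars. This is a one-line rational inverse and simultaneously furnishes the explicit recovery procedure of Remark \ref{rem:rec H41}; no identification of $\C_F$, no $\pgl$-ambiguity, and no automorphism argument is needed.
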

\begin{proof}
Let $a_0,\ldots,a_4$ be the coordinates of $\P^4$ and let $F=\sum a_ix_0^{4-i}x_1^i$. 
Let $b_0,\ldots,b_5$ be the coordinates of $\P^5$ corresponding to the monomials $z_{0,0}^2,z_{0,0}z_{0,1},z_{0,1}^2,z_{0,0}z_{1,1},
z_{0,1}z_{1,1},z_{1,1}^2
$.
Using the software \textsc{Macaulay2} \cite{M2}, one can check that  the ideal of $h_F(\bbV(F))$ is generated by the quadrics
\begin{equation}\label{eq:recov 41}\begin{array}{lcl}
Q_1 &:=& 3a_4z_{0,0}^2-3a_3z_{0,0}z_{0,1}+2a_2z_{0,1}^2+a_2z_{0,0}z_{1,1}-3a_1z_{0,1}z_{1,1}+3a_0z_{1,1}^2
\\
\noalign{\vspace*{1mm}}
Q_2&:=&
9a_3^2z_{0,0}^2+(-36a_2a_3+72a_1a_4)z_{0,0}z_{0,1}+(20a_2^2-144a_0a_4)z_{0,1}^2+
\\
\noalign{\vspace*{1mm}}
&&(16a_2^2-18a_1a_3)z_{0,0}z_{1,1}+(-36a_1a_2+72a_0a_3)z_{0,1}z_{1,1}+9a_1^2z_{1,1}^2.
\end{array}
\end{equation}
In particular, we see that for generic $F$, the line $H_{4,1}(F)$ is not contained in the hyperplane $\{b_2-2b_3=0\}$. Hence, we deduce that $H_{4,1}(F)$ and this hyperplane intersect in the quadric $Q_1$. Since $F$ is uniquely determined by $Q_1$ and viceversa, we conclude that $H_{4,1}$ is generically injective.
\end{proof}

\para

In Remark \ref{rem:fiberH31}, we showed how to determine the fiber of $H_{3,1}$. 
The proof of Proposition \ref{prop:H41} provides a method for computing the fibers of $H_{4,1}$.

\para

\begin{remark}\label{rem:rec H41}  \textsf{(Recovery of the fiber of $H_{4,1}$)}
Let $X\in \mathrm{Im} H_{4,1}$, and let $Q$ be the unique quadric in the intersection of  $X$ with the hyperplane defined by the equation $b_2-2b_3=0$, where $b_0,\ldots,b_5$ are the coordinates of $\P^5$; as in the proof of Proposition \ref{prop:H41}.  Then,  using equation \eqref{eq:recov 41} we can compute the unique element in $H_{4,1}^{-1}(X)$.
\end{remark}

\para

We finish this subsection dealing with the computation of the image of $H_{4,1}$. Using the Plücker coordinates, we compute this image in $\P^{14}$.

\para 

\begin{proposition}
    The image of $H_{4,1}$ in $\P^{14}$ has dimension $4$, degree $29$, and its generated by $3$ linear forms, the quadrics generating the ideal of $\Gr(1,\P^5)$ and $7$ cubic forms.
\end{proposition}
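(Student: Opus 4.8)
The plan is to reduce the statement to an implicitization computation, using the explicit parametrization of $H_{4,1}$ already extracted in the proof of Proposition \ref{prop:H41}. Write $F=\sum_{i=0}^{4}a_ix_0^{4-i}x_1^i$ with $[a_0:\cdots:a_4]\in\P^4$, and let $q_1,q_2\in\K^6$ be the coefficient vectors of the two generating quadrics $Q_1,Q_2$ of \eqref{eq:recov 41}, expressed in the coordinates $b_0,\ldots,b_5$ of $\P^5\simeq\P(H^0(\P^2,\O_{\P^2}(2)))$. The line $H_{4,1}(F)=Q_F$ is the pencil $\langle Q_1,Q_2\rangle\in\Gr(1,\P^5)$, so its Plücker coordinates are the $\binom{6}{2}=15$ maximal minors $\Phi_{ij}:=q_{1,i}q_{2,j}-q_{1,j}q_{2,i}$, $0\le i<j\le 5$, of the $2\times 6$ matrix with rows $q_1,q_2$. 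Since the entries of $q_1$ are linear and those of $q_2$ quadratic in $a_0,\ldots,a_4$, each $\Phi_{ij}$ is a form of degree $3$, and we obtain an explicit rational map $\Phi:\P^4\dashrightarrow\P^{14}$ whose image is the image of $H_{4,1}$ under the Plücker embedding $\Gr(1,\P^5)\hookrightarrow\P^{14}$.

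The dimension part then requires no further work: the Plücker embedding is a closed immersion and $H_{4,1}$ is birational onto its image by Proposition \ref{prop:H41}, so $\overline{\mathrm{Im}\,\Phi}$ is irreducible and birational to $\P^4$, hence of dimension $4$. The remaining assertions are obtained by computing the homogeneous ideal $I\subset\K[p_{ij}\colon 0\le i<j\le 5]$ as the kernel of the $\K$-algebra map $\K[p_{ij}]\to\K[a_0,\ldots,a_4]$, $p_{ij}\mapsto\Phi_{ij}(a)$; equivalently, one adjoins the relations $p_{ij}-t\,\Phi_{ij}(a)$, saturates by the base ideal $(\Phi_{ij})$, and eliminates $a_0,\ldots,a_4$ and $t$. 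From a Gröbner basis of $I$ one reads off that the Hilbert polynomial of $\K[p_{ij}]/I$ has leading term $\tfrac{29}{4!}\,t^4$, giving degree $29$, and that a minimal generating set of $I$ consists of $3$ linear forms, the quadrics generating the ideal of $\Gr(1,\P^5)$, and $7$ cubics. That the Plücker quadrics lie in $I$ is automatic, since $\mathrm{Im}\,\Phi\subseteq\Gr(1,\P^5)$ and the latter is defined by quadrics; what the computation certifies is that they, together with the $3$ linear forms and exactly $7$ cubics, generate $I$.

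In practice the elimination is the only real obstacle, since a direct Gröbner basis of $I$ over $\Q$ can be costly. A convenient route is to first determine the $3$ linear syzygies of $\Phi$ by linear algebra, restrict $\Phi$ to the resulting $\P^{11}\subset\P^{14}$, and perform the elimination there, so that the Plücker quadrics appearing among the generators are those of $\Gr(1,\P^5)\cap\P^{11}$; alternatively, one computes the Hilbert function and the graded Betti numbers over a large prime and certifies them over $\Q$. The genuinely delicate point of the verification is that no generators of degree $\ge 4$ are needed and that exactly $7$ are needed in degree $3$ beyond the linear and Plücker relations — a minimal free resolution computation — but there is no conceptual difficulty, and the proposition is established by this \textsc{Macaulay2} \cite{M2} calculation.
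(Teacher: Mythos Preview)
Your proposal is correct and takes essentially the same approach as the paper, which simply states that the result is obtained by a direct computation in \textsc{Macaulay2}. You have supplied a more detailed account of how one would set up and carry out that computation (parametrizing via the Pl\"ucker coordinates of $\langle Q_1,Q_2\rangle$, eliminating, and reading off degree and minimal generators), but the underlying method is the same.
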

\begin{proof}
    This result is obtained by a direct computation using the software \textsc{Macaulay2} \cite{M2}.
\end{proof}

\subsection{Case $n\geq 2$}\label{4n>1}

In Section \ref{sec:d=3}, we studied $H_{3,n}$ through the linear subspace $h_F(\P^n)$. Analogously, for $d=4$, we look at the Veronese variety $h_F(\P^n)$.
In order to study $H_{4,n}$ for $n\geq 2$ we will prove that for $F\in U$,  $h_F(\P^n)$ is the unique Veronese variety containing the Hessian variety. Note that for the case $n=1$ this doesn't hold, since $H_{4,1}(F)$ is the intersection of two plane quadrics. In the rest of the subsection we fix $n\geq 2$.
The strategy to study the Veronese varieties containing the Hessian variety is to look at its first order syzygies. Let $R$ be the homogeneous coordinate ring of $\P(S^2V)$ and let $M$ be a graded $R$--module. Let 
\[M\longleftarrow F_0\overset{\delta_1}{\longleftarrow}F_1\overset{\delta_2}{\rightarrow} F_2\longleftarrow
\cdots
\]
be the minimal free resolution of $M$ where $F_i=\oplus_j R(-j)^{\beta_{i,j}}$. The integers $\beta_{i,j}$ are called the graded Betti numbers and the image of $\delta_1$ is called the module of first order sygygies of $M$. We say that the first order sygygies are linear if the image of $\delta_1$ is generated by elements of degree $1$, or equivalently, the entries of the map $\delta_1$ are linear forms (See \cite{E} for further details).

\para

\begin{lemma}\label{lemma:qua vero}
Let $X_1$ and $X_2$ be two projective subvarieties of $\P^N$ of codimension greater than $1$, defined by the same numbers of quadrics and with linear first order syzygies. Moreover, assume that $X_1$ is irreducible, nondegenerate and $X_1\cap X_2 = X_1\cap Q$ where $Q$ is a quadric hypersurface. Then, $X_1 = X_2$.
\end{lemma}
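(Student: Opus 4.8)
The plan is to descend to homogeneous ideals, reduce the statement to a question about the spaces of defining quadrics, and then invoke the linear--syzygy hypothesis to eliminate the one configuration that pure linear algebra leaves open.

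Write $I_i:=I(X_i)$ for the saturated ideal ($I_1$ prime, since $X_1$ is reduced and irreducible) and $V_i:=(I_i)_2$, so that by hypothesis $I_i=(V_i)$ and $\dim V_1=\dim V_2=r$; it then suffices to prove $V_1=V_2$. Since $X_1$ is integral and $Q\notin I_1$ (the latter because $Q\notin V_1$ and $I_1$ is quadratically generated), the cohomology of $0\to\O_{X_1}(-2)\xrightarrow{\,Q\,}\O_{X_1}\to\O_{X_1\cap Q}\to 0$ shows that the quadrics vanishing on $X_1\cap Q$ are exactly the space $V_1+\K Q$; and since $I_2$ vanishes on $X_1\cap X_2=X_1\cap Q$, this gives $V_2\subseteq V_1+\K Q$. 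If $V_2\subseteq V_1$ then $V_1=V_2$ by equality of dimensions and we are done. Otherwise choose $Q'\in V_2\setminus V_1$; then $Q'=w'+cQ$ with $c\ne 0$ and $w'\in V_1$, so replacing $Q$ by $Q'$ changes neither $X_1\cap Q$ (as $w'$ vanishes on $X_1$) nor the inclusion $V_2\subseteq V_1+\K Q$, and now $Q\in V_2\setminus V_1$. Consequently $W:=V_1\cap V_2$ has dimension $r-1$ and
\[
V_1=W\oplus\K q_0,\qquad V_2=W\oplus\K Q
\]
for some quadric $q_0\in V_1\setminus W$; note $q_0\notin V_2$. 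Moreover nondegeneracy forces $\dim X_1\ge 1$, so $D:=X_1\cap X_2=X_1\cap V(Q)$ is a nonempty proper quadric section of $X_1$, of dimension $\dim X_1-1$, and also a proper quadric section of $X_2$, cut out there by $q_0$ (indeed $X_1\cap X_2=(V(W)\cap V(Q))\cap V(q_0)=X_2\cap V(q_0)$).

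The remaining task — and the step where "codimension $>1$" and "linear first order syzygies" are genuinely used — is to show that this configuration cannot occur. I would attack it by linkage: since $Q$ is a nonzerodivisor on the domain $R/I_1$, the module $R/I(D)$ is resolved by the mapping cone of the minimal resolution $R\leftarrow R(-2)^{r}\leftarrow R(-3)^{s_1}\leftarrow\cdots$ of $R/I_1$ with the Koszul complex $R(-2)\xrightarrow{\,Q\,}R$, and symmetrically by the mapping cone built from the minimal resolution of $R/I_2$ with the Koszul complex on $q_0$. Both presentations of $D$ have the same $r+1$ quadric generators (namely $W\cup\{q_0,Q\}$) and, thanks to the linear first syzygies of $I_1$ and $I_2$, a linear strand of first syzygies that one reads off from the two cones. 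Comparing these linear strands, and using that linear first syzygies of a quadratically generated ideal force the Koszul relations among its generators to be non--minimal, should show that a Koszul relation between $q_0$ and a generator of $\langle W\rangle$ survives minimally in one cone and not in the other unless $q_0\in(I_2)_2=V_2$; but $q_0\in V_2$ together with $q_0\in V_1$ gives $q_0\in V_1\cap V_2=W$, contradicting $q_0\notin W$. Hence the configuration is impossible and $V_1=V_2$.

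The main obstacle I anticipate is the bookkeeping in that resolution comparison, in particular controlling the possible cancellations against the \emph{second} syzygies of $R/I_1$ and $R/I_2$ — about which the hypotheses say nothing directly — so one may need to argue at the level of Betti numbers of $R/I(D)$ that these cancellations must agree on both sides. The codimension hypothesis is what guarantees that $D$ is a genuine proper hypersurface section on both $X_1$ and $X_2$, so the Koszul complexes used in the linkage are exact; and the few degenerate sub--cases that remain — for instance a whole component of $X_2$ lying inside $X_1$ — collapse at once to $V_1=V_2$ or are excluded by that same hypothesis.
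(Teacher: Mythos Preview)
Your reduction in the first two paragraphs is correct and matches the paper's setup exactly: after the harmless replacement of $Q$ you may assume $V_1=W\oplus\K q_0$ and $V_2=W\oplus\K Q$ with $W=V_1\cap V_2$ of codimension one in each.

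The gap is in your third paragraph. The linkage/mapping--cone comparison is only a sketch --- you write ``should show'' and then immediately flag the obstacle yourself: cancellations against second syzygies of $I_1,I_2$, about which nothing is assumed. Concretely, the two mapping cones give (non--minimal) resolutions of the \emph{same} ideal $(W,q_0,Q)$ with first syzygy term $R(-3)^{s_i}\oplus R(-4)^{r}$, and to extract a contradiction you would need to compare the minimal $\beta_{1,4}$ seen from both sides; but the $R(-4)^r$ Koszul block can cancel against a possible $R(-4)$ summand in the second syzygies of $I_i$, and nothing in the hypotheses controls those. The argument as written does not close.

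The paper's route is far more direct and avoids resolving $D$ altogether. The point is that ``linear first syzygies'' has an immediate geometric meaning: writing $I_1=(\alpha_1,\dots,\alpha_m)$ with $\alpha_1=q_0$ and $\alpha_2,\dots,\alpha_m$ spanning $W$, there must be a linear syzygy $l_1\alpha_1=\sum_{i\ge2}l_i\alpha_i$ with $l_1\neq 0$ (otherwise every syzygy, being generated by linear ones, would have zero $\alpha_1$--coefficient, contradicting the Koszul syzygy $(\alpha_2,-\alpha_1,0,\dots,0)$). Hence on $D(l_1)$ the generator $\alpha_1$ is redundant and $X_1\cap D(l_1)=\bbV(W)\cap D(l_1)$. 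Likewise there is a linear form $l_2$ with $X_2\cap D(l_2)=\bbV(W)\cap D(l_2)$. On $D(l_1)\cap D(l_2)$ both varieties agree with $\bbV(W)$; since $X_1$ is irreducible and nondegenerate it meets this open set, so taking closures gives $X_1\subseteq X_2$, and equality of $\dim V_1=\dim V_2$ forces $X_1=X_2$.

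In short, the hypothesis you were trying to exploit via Betti--number bookkeeping is used in one line: a linear first syzygy lets you drop one chosen quadric generator on a principal affine open.
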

\begin{proof}
Let $I_1$ and $I_2$ be the ideals of $X_1$ and $X_2$ respectively, and let $q$ be the quadratic form defining $Q$. Then, $I_1+I_2 = \langle \alpha_1,\ldots,\alpha_m,q\rangle$, where  $\alpha_1,\ldots,\alpha_m$ are the quadrics generating $I_1$. We can assume that $I_2$ is generated by $q,\alpha_2,\ldots,\alpha_m$.  Since the first order syzygies of $X_1$ and $X_2$ are linear, there exist $l_1$ and $l_2$ linear forms such that $D(l_1)\cap X_1=\mathbb{V}(\alpha_2,\ldots,\alpha_m)$ and  $D(l_2)\cap X_2=\mathbb{V}(\alpha_2,\ldots,\alpha_m)$.  Since $X_1$ is generated by quadrics, it is nondegenerate. Therefore, $D(l_1)\cap X_1$ and $D(l_2)\cap X_1$ are non-empty. Since $X_1$ is irreducible, $D(l_1)\cap D(l_2)\cap X_1\neq \emptyset$. Similarly, $D(l_2)\cap X_2\neq \emptyset$. Hence, $D(l_1)\cap D(l_2)\cap X_1\subseteq D(l_2)\cap X_2\neq \emptyset$.
 Since $X_1$ is irreducible, we get that $X_1\subseteq X_2$, and therefore,  $I_2\subseteq I_1$. Since $I_1$ and $I_2$ are generated by the same number of linearly independent quadrics, we conclude that $X_1=X_2$. 
\end{proof}

Using this lemma, we derive the following proposition.

\para 

\begin{proposition}\label{prop:uniq vero}
 For $F\in U$, $h_F(\P^n)$ is the unique Veronese variety containing $H_{4,n}(F)$.
\end{proposition}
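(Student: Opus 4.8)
The plan is to reduce the statement to Lemma \ref{lemma:qua vero}, taking $X_1:=h_F(\P^n)$ and letting $X_2$ be an arbitrary Veronese variety containing $H_{4,n}(F)$. First I would check that $X_1$ fits the lemma's hypotheses. Since $F\in U$, the $\binom{n+2}{2}$ second order partials of $F$ form a basis of $\SS$, so $h_F$ is a quadratic Veronese embedding composed with a linear automorphism of $\P(\SS)$; hence $X_1$ is projectively equivalent to $V^{2,n}$. Consequently $X_1$ is irreducible, nondegenerate, of codimension $\binom{n+1}{2}>1$ in $\P(\SS)$ (this is where $n\geq 2$ enters), and cut out by a fixed number $m$ of linearly independent quadrics with linear first syzygies, the ideal of the quadratic Veronese being the classical ideal of $2\times 2$ minors of a symmetric matrix of variables (see \cite{E}). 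Finally, since $h_F^{*}$ surjects onto the space of quartic forms on $\P^n$, there is a quadric hypersurface $Q_F\subset\P(\SS)$ with $h_F^{-1}(Q_F)=\bbV(F)$; therefore $H_{4,n}(F)=h_F(\bbV(F))=X_1\cap Q_F$ is a quadric section of $X_1$.

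Now let $X_2$ be any Veronese variety with $H_{4,n}(F)\subseteq X_2$. Being projectively equivalent to $V^{2,n}$ as well, $X_2$ has the same features: irreducible, nondegenerate, codimension $>1$, cut out by $m$ quadrics, linear first syzygies. Thus every hypothesis of Lemma \ref{lemma:qua vero} for the pair $X_1,X_2$ is in place except the requirement that $X_1\cap X_2=X_1\cap Q$ for a single quadric hypersurface $Q$; once that is established the lemma yields $X_1=X_2$, which is the assertion.

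To prove $X_1\cap X_2=X_1\cap Q$ I would identify $X_1$ with $\P^n$ through $h_F$ and restrict everything to $X_1$: the quadrics cutting out $X_2$ pull back, via $h_F$, to quartic forms on $\P^n$, and since $h_F(\bbV(F))\subseteq X_1\cap X_2$ each of these quartics vanishes on $\bbV(F)$. The crucial point is that any quartic vanishing on $\bbV(F)$ is a scalar multiple of $F$: for general $F$ this is immediate, and it remains true whenever $F$ is squarefree, since then $F$ is a product of distinct irreducible factors and a quartic vanishing on their union is divisible by each of them. It follows that the ideal of $X_2$ restricted to $X_1$ is either zero, in which case $X_1\subseteq X_2$ and hence $X_1=X_2$ by irreducibility and equality of dimension, or else is generated by the single quartic $F$, i.e.\ $X_1\cap X_2=X_1\cap Q_F$. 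In the second case Lemma \ref{lemma:qua vero} applies and gives $X_1=X_2$, completing the argument.

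The main obstacle, and the step demanding genuine care, is precisely the claim that every quartic vanishing on $\bbV(F)$ is proportional to $F$: this is where the nature of $F$ really matters. Non-reduced $F$ is the true difficulty, for instance $F=G^2$ with $G$ a nondegenerate quadratic form lies in $U$, yet $\bbV(F)$ is merely a quadric hypersurface, the quartics vanishing on it span a large space, and correspondingly $H_{4,n}(F)$ becomes a hyperplane rather than a quadric section of $X_1$; such degenerate cases must either be ruled out by a genericity hypothesis on $F$ or handled by a separate argument. The only other ingredient that is not elementary is the linearity of the first syzygies of the Veronese ideal, which I would quote from the literature rather than reprove.
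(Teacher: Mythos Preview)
Your argument is essentially the paper's own: reduce to Lemma~\ref{lemma:qua vero} by showing that $X_1\cap X_2$ is a single quadric section of $X_1$, which follows because any quadric in $I(X_2)$ pulls back along $h_F$ to a quartic on $\P^n$ vanishing on $\bbV(F)$, hence proportional to $F$. The only cosmetic difference is that the paper selects \emph{one} quadric $q\in I(X_2)$ not vanishing on $X_1$ and argues directly that $h_F^{-1}(\bbV(q))$ and $\bbV(F)$ are degree-$4$ hypersurfaces with the latter contained in the former, hence equal, so $X_1\cap X_2=\bbV(q)\cap X_1$; you pull back all the generators at once. The logical content is identical.

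Your caution about non-squarefree $F$ is well placed and applies verbatim to the paper's proof: the step ``a degree-$4$ hypersurface containing $\bbV(F)$ equals $\bbV(F)$'' is exactly the assertion that any quartic vanishing on $\bbV(F)_{\mathrm{red}}$ lies in $(F)$, which requires $F$ reduced. The paper states the proposition for all $F\in U$ but its argument, like yours, only goes through as written when $F$ is squarefree; your example $F=G^2$ with $G$ a nondegenerate quadric genuinely lies in $U$ and shows the boundary case is real. Since the downstream uses (Theorem~\ref{theo:4,n} and the recovery algorithm) only need the generic case, this does not affect the paper's main results, but you are right that the proposition as stated is stronger than what either proof establishes.
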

\begin{proof}
Assume that the Hessian variety is contained in two  Veronese varieties $V_1 = h_F(\P^n)$ and $V_2$. Then, there exists a quadratic form $q$ in the ideal of $V_2$ such that $H_{4,n}(F)\subseteq \bbV(q)\cap V_1 \varsubsetneq V_1$. Since both $\bbV(F)$ and $h_F^{-1}(\bbV(q))$ have degree $4$, we deduce that $H_{4,n}(F) = \bbV(q)\cap V_1$ and we get that  $V_1\cap V_2 = \bbV(q)\cap V_1$. By Lemma \ref{lemma:qua vero} we conclude that $V_1=V_2$.
\end{proof}

\para 

Let $F$ and $G$  be two polynomials in  $U$ with the same Hessian variety. By the previous proposition, $H_{4,n}(F)$ is contained in a unique Veronese variety. Hence, we deduce that $h_F(\P^n) = h_G(\P^n)$. Therefore, $h_G^{-1}\circ h_F$ is an automorphism of $\P^n$ that maps $\bbV(F)$ to $\bbV(G)$.  Hence, we deduce that a generic fiber of $H_{4,n}$ is contained in an orbit of the action $\pgn$ on $\P(\Ss)$. The following result provides a better description of a generic fiber of $H_{4,n}$, namely as a point.

\para 

\begin{theorem}\label{theo:4,n}
For $n\geq 2$, $H_{4,n}$ is birational onto its image.
\end{theorem}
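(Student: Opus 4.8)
### Proof Proposal

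The plan is to reduce the statement to showing that the only automorphism $g \in \pgn$ fixing a generic $F \in U$ with $g \cdot F = G$ for some $G$ in the fiber must be the identity. By the discussion immediately preceding the theorem, we already know that a generic fiber $H_{4,n}^{-1}(H_{4,n}(F))$ is contained in a single $\pgn$--orbit: if $G$ has the same Hessian variety as $F$, then since $h_F(\P^n)$ is the unique Veronese variety containing $H_{4,n}(F)$ (Proposition~\ref{prop:uniq vero}), we get $h_F(\P^n) = h_G(\P^n)$, and $g := h_G^{-1} \circ h_F \in \pgn$ satisfies $g \cdot F = G$. So it suffices to show that for generic $F$, the set of $g \in \pgn$ with $h_{g \cdot F}(\P^n) = h_F(\P^n)$ (equivalently, $\rho(g^{\mathrm{t}})^{\mathrm{t}}$ preserves the Veronese variety $h_F(\P^n)$, by Lemma~\ref{lemma:n=1 2}) forces $g \cdot F = F$, and moreover that the only such $g$ is the identity on $F$.

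First I would analyze which $g \in \pgn$ have the property that $\rho(g^{\mathrm{t}})^{\mathrm{t}}$ maps the Veronese variety $V := h_F(\P^n) = \{ \ell \otimes \ell : \ell \in \P^n \}$ to itself. The automorphism group of the Veronese variety $V^{2,n} \subset \P(S^2V)$ inside $\pgl(S^2V)$ is exactly $\pgn$ acting by $M \mapsto (g^{\mathrm{t}})^{-1} M (g^{\mathrm{t}})^{-\mathrm{t}}$ up to scalars — this is classical, since an automorphism of $\P(S^2V)$ preserving the rank--one locus is induced by a linear (or anti-linear, but we are over an algebraically closed field of characteristic $0$, so linear) change of coordinates on $V$. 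Since $\rho(g^{\mathrm{t}})^{\mathrm{t}}$ is precisely such a conjugation, it always preserves $V^{2,n}$; the real content is that it must preserve the \emph{subvariety} $H_{4,n}(F) \subset V$ of degree $4$, which pulls back under $h_F$ to $\bbV(F)$. Thus $g \cdot F = G$ where $\bbV(G) = g^{-1}\bbV(F)$, consistent with what we know; the question is whether $g$ can be nontrivial.

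The key step is then a dimension/genericity count on the fiber itself. Consider the incidence variety $\{(F, g) : g \in \pgn,\ H_{4,n}(g \cdot F) = H_{4,n}(F)\}$ and its two projections to $\P(\Ss)$ and to $\pgn$. By Proposition~\ref{prop:isofibers}, the fiber $H_{4,n}^{-1}(H_{4,n}(F))$ over a generic $F$ is a subgroup-coset structure: it is the image of the stabilizer-type set $\Sigma_F := \{ g \in \pgn : \rho(g^{\mathrm{t}})^{\mathrm{t}}(H_{4,n}(F)) = H_{4,n}(F)\}$ under $g \mapsto g \cdot F$, exactly as in the $n=1$ analysis (Proposition~\ref{prop:bij n=1}). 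So it suffices to show $\Sigma_F$ is trivial for generic $F$. Now $g \in \Sigma_F$ means the automorphism $g$ of $\P^n$ sends $\bbV(F)$ to $\bbV(g \cdot F) = \bbV(F)$ — wait, more carefully: $\rho(g^{\mathrm{t}})^{\mathrm{t}}$ preserving $H_{4,n}(F)$ means, composing with $h_F$ and using Lemma~\ref{lemma:grad hess}-style identifications for $d=4$, that $g$ maps $\bbV(F)$ to a hypersurface whose Hessian variety coincides with that of $F$; iterating, one sees $\Sigma_F$ consists of automorphisms $g$ with $g \cdot F \in H_{4,n}^{-1}(H_{4,n}(F))$, but since such $g$ restricted to $V = h_F(\P^n) \cong \P^n$ fixes the quartic locus $\bbV(F)$, and a generic quartic hypersurface in $\P^n$ (for $n \geq 2$) has trivial automorphism group, we conclude $g = \mathrm{id}$.

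I expect the main obstacle to be making the last sentence fully rigorous: establishing that a generic $F \in U$ has trivial projective automorphism group. For $n \geq 2$ and degree $d = 4 \geq 3$ this is a classical fact (the generic hypersurface of degree $\geq 3$ in $\P^n$, $n \geq 2$, has no nontrivial automorphisms — Matsumura--Monsky for $n \geq 3$, and a direct argument for plane quartics), but one must be careful that the genericity conditions defining $U$ (second derivatives linearly independent) are compatible with genericity of the automorphism group, i.e. that $U$ meets the locus of automorphism-free quartics in a dense open set; this holds because both are nonempty open conditions on the irreducible variety $\P(\Ss)$. A secondary technical point is verifying that an element $g\in\Sigma_F$ genuinely restricts to an automorphism of the abstract variety $\bbV(F)$ realized as a quartic in the ambient $\P^n \cong V$ — this uses that $h_F$ is a Veronese \emph{embedding} for $F \in U$ and the $\pgn$--equivariance from Lemma~\ref{lemma:n=1 2}, together with Proposition~\ref{prop:uniq vero} pinning down the ambient Veronese, so $g$ acts compatibly on both copies. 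Once these are in place, $\Sigma_F = \{\mathrm{id}\}$, hence the fiber is a single point, and $H_{4,n}$ is birational onto its image.
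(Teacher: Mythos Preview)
Your overall strategy---reduce to showing the associated $g\in\pgn$ is the identity by invoking that a generic quartic hypersurface in $\P^n$ ($n\geq 2$) has trivial projective automorphism group---is a valid route and does yield the theorem. It is, however, genuinely different from the paper's argument, and it carries one conceptual slip that should be fixed. You write $V:=h_F(\P^n)=\{\ell\otimes\ell:\ell\in\P^n\}$, but for general $F\in U$ the image $h_F(\P^n)$ is \emph{not} the rank-one locus $V^{2,n}$; the Hessian matrices $\hessMat_F(p)$ are generically of full rank. What is true (and what you actually need) is that $\rho(g^{\mathrm t})^{\mathrm t}$ preserves $h_F(\P^n)$ whenever $g\cdot F$ has the same Hessian variety as $F$: this follows from Lemma~\ref{lemma:n=1 2} together with Proposition~\ref{prop:uniq vero}, not from any statement about $V^{2,n}$. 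Once that is in place, the conjugate $\tilde g:=h_F^{-1}\circ\rho(g^{\mathrm t})^{\mathrm t}|_{h_F(\P^n)}\circ h_F$ is a linear automorphism of $\P^n$ preserving $\bbV(F)$; triviality of $\mathrm{Aut}(\bbV(F))$ for generic $F$ forces $\tilde g=\mathrm{id}$, hence $\rho(g^{\mathrm t})^{\mathrm t}|_{h_F(\P^n)}=\mathrm{id}$, and nondegeneracy plus injectivity of $\rho$ give $g=\mathrm{id}$.

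By contrast, the paper's proof avoids the Matsumura--Monsky-type input entirely. Having set $g=h_F^{-1}\circ h_G$ so that $h_G=h_F\circ g$ and $g^{\mathrm t}\cdot F=G$, the chain rule gives a second expression $h_G=M\circ h_F\circ g$, where $M$ is the linear action of $g$ on $S^2V$. Comparing the two forces $M$ to be the identity on $h_F(\P^n)$, hence on all of $\P(S^2V)$ by nondegeneracy, and then reading off the entries of $M$ shows $g$ is scalar. This is self-contained, works for every $F\in U$ (not merely generic $F$), and sidesteps any appeal to automorphism groups of hypersurfaces---which is also why the paper's method applies uniformly even in low-dimensional cases where your auxiliary fact would need separate verification.
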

\begin{proof}
To prove that $H_{4,n}$ is birational it is enough to check that on $U$ it is injective. Let $F,G \in U$ such that $H_{4,n}(F)=H_{4,n}(G)$. By Proposition \ref{prop:uniq vero}, $h_F(\P^n)=h_G(\P^n)$. Fixing $g=h_F^{-1}\circ h_G$ we get that $g(\bbV(G))=\bbV(F)$. Therefore,  $g^\t\cdot F=G$ and $h_G = h_F\circ g$.
As in the proof of Lemma \ref{lemma:n=1 2}, we get that
\[
\frac{\partial^2 G}{\partial x_i\partial x_j} = 
\sum_{k} 
g_{k,i}
 g_{k,j}
\frac{\partial^2  F }{\partial x_k^2}\circ  g +
\sum_{k<l} 
( g_{k,i}
 g_{l,j}+g_{l,i}g_{k,j})
\frac{\partial^2  F }{\partial x_k\partial x_l}\circ  g.
\]
Let $M$ be the automorphism of $\P(\SS)$ given by the linear forms 
\[
l_{i,j} = \displaystyle\sum_{k} g_{k,i}g_{k,j} z_{k,k}+\sum_{k<l} (g_{k,i}g_{l,j}+g_{l,i}g_{k,j}) z_{k,l}.
\]
Then, $h_G = M\circ h_F\circ g$. Since $g = h_F^{-1}\circ h_G$, we deduce that $M$ restricted to $h_F(\P^n)$ is the identity. Using that $h_F(\P^n)$ is nondegenerate, we get that $M = \mathrm{Id}$. Therefore,  for every $i,j$  we get that $l_{i,j} = \lambda z_{i,j}$ for $\lambda\in\K^{*}$. For $l_{i,i}=\lambda z_{i,i}$ we deduce that $g_{k,i}=0$ and $g_{i,i}=g_{k,k}$ for $i\neq k$. We conclude that $g = \mathrm{Id_n}$ and $F=G$.
\end{proof}

\para 

\subsection{Reconstruction algorithm for $n$ even}\label{subsec:alg d=4}

Once we have proven that $H_{4,n}$ is birational, our next goal is to find an effective algorithm for recovering $F$ from $H_{4,n}(F)$. We present an algorithm for the case where $n$ is even. 

\para

\begin{center}
\textsf{First step: computation of the Veronese variety $\mathcal{V}_X$}
\end{center}

\para

  Given $X$ a generic element in the image of $H_{4,n}$, 
  the first step of the algorithm is to
 compute the unique Veronese variety $\mathcal{V}_X$ containing $X$.   Let $m+1$ be the number of quadratic forms generating the ideal of $X$. Since $n\geq 2$, we have that 
$m\geq 2$. Among these quadrics, $m$ of them generate the ideal of $\mathcal{V}_X$ containing $X$. 

\para 

\begin{lemma}\label{lemma:recover vero}
 Let $X\in H_{4,n}(U)$ and let $I_2(X)$ be the vector space of quadrics containing $X$. Let  $W$ be a vector subspace of $I_2(X)$ of codimension $1$. Then, $\bbV(W)$ is a Veronese variety if and only if $\bbV(W)$ has linear syzygies.
\end{lemma}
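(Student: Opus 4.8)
The plan is to reduce the statement to Lemma~\ref{lemma:qua vero} together with the uniqueness in Proposition~\ref{prop:uniq vero}. Write $X=h_F(\bbV(F))$ with $F\in U$, so that $h_F$ is a Veronese embedding (given by a basis of the quadrics on $\P^n$) and $\mathcal{V}_X:=h_F(\P^n)$ is a Veronese variety. Picking a quadric $q$ on $\P(S^2V)$ with $h_F^*q=F$ — possible by projective normality of $\mathcal{V}_X$ — we obtain $X=\mathcal{V}_X\cap\bbV(q)=:\mathcal{V}_X\cap Q$. The first point I would record is the dimension count $\dim I_2(X)=m+1$, where $m:=\dim I_2(\mathcal{V}_X)$, so that $\dim W=m$: by projective normality the restriction map $H^0(\P(S^2V),\mathcal{O}(2))\to H^0(\P^n,\mathcal{O}(4))$ is surjective with kernel $I_2(\mathcal{V}_X)$, and any quadric through $X$ pulls back to a quartic vanishing on $\bbV(F)$, hence a multiple of $F=h_F^*q$; subtracting the corresponding multiple of $q$ lands it in $I_2(\mathcal{V}_X)$, so $I_2(X)=I_2(\mathcal{V}_X)\oplus\langle q\rangle$. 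I would also note that $X$, and hence any variety containing it, is nondegenerate, since no quadric of $\P^n$ vanishes on the quartic $\bbV(F)$.

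For the direction ``$\bbV(W)$ Veronese $\Rightarrow$ linear syzygies'': since $W\subseteq I_2(X)$ we have $X\subseteq\bbV(W)$, so if $\bbV(W)$ is a Veronese variety then $\bbV(W)=\mathcal{V}_X$ by Proposition~\ref{prop:uniq vero}, and comparing dimensions forces $W=I_2(\mathcal{V}_X)$. The statement then reduces to the classical fact that a quadratic Veronese variety, whose ideal is generated by the $2\times2$ minors of a generic symmetric matrix, has a $2$--linear minimal free resolution, in particular linear first syzygies.

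For the converse I would apply Lemma~\ref{lemma:qua vero} with $X_1=\mathcal{V}_X$ and $X_2=\bbV(W)$. The variety $X_1$ is irreducible, nondegenerate, of codimension $\binom{n+1}{2}>1$ for $n\ge2$, and is defined by $m$ quadrics with linear syzygies by the previous paragraph; $X_2=\bbV(W)$ is defined by the $m$ quadrics spanning $W$ and has linear syzygies by hypothesis. That $\mathrm{codim}\,\bbV(W)\ge2$ I would get by excluding a hypersurface component, which would produce a common factor of all quadrics of $W$ of degree one or two; both are impossible, using $m\ge2$ and the nondegeneracy of $X$. Finally, $X_1\cap X_2=\bbV\!\left(I_2(\mathcal{V}_X)+W\right)$: if $W\subseteq I_2(\mathcal{V}_X)$ then $W=I_2(\mathcal{V}_X)$ and $\bbV(W)=\mathcal{V}_X$ is already a Veronese variety; otherwise $I_2(\mathcal{V}_X)+W$ strictly contains $I_2(\mathcal{V}_X)$ inside the $(m+1)$--dimensional space $I_2(X)$, hence equals $I_2(X)$, whence $X_1\cap X_2=\bbV(I_2(X))=X=\mathcal{V}_X\cap Q=X_1\cap Q$. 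Lemma~\ref{lemma:qua vero} then gives $\mathcal{V}_X=\bbV(W)$, i.e.\ $\bbV(W)$ is a Veronese variety.

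The step I expect to be the main obstacle is verifying the hypothesis of Lemma~\ref{lemma:qua vero} that $X_2=\bbV(W)$ is ``defined by $m$ quadrics'', i.e.\ that its homogeneous ideal is $(W)$, generated by $m$ quadrics and not $m+1$. By the dimension count, $\dim I_2(\bbV(W))$ is $m$ or $m+1$, and the value $m+1$ would force $\bbV(W)=X$ — which is incompatible with the linear--syzygy hypothesis, because $I(X)=I(\mathcal{V}_X)+\langle q\rangle$ is a quadric section of a variety with a linear resolution and hence carries a Koszul--type first syzygy of degree $2$, so $I(X)$ does \emph{not} have linear first syzygies. Making this precise — together with the auxiliary point that, for a nondegenerate variety cut out by quadrics, the linear--syzygy property forces the quadratic ideal to be saturated, so that $(W)=I(\bbV(W))$ — is the technical heart of the proof; the rest is the bookkeeping indicated above.
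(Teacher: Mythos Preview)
Your approach is the same as the paper's: cite the linear resolution of the Veronese for one direction, and for the converse apply Lemma~\ref{lemma:qua vero} with $X_1=\mathcal{V}_X$ and $X_2=\bbV(W)$ after invoking Proposition~\ref{prop:uniq vero}. The paper's proof is in fact terser and does not address the points you flag as obstacles (the codimension of $\bbV(W)$, whether $(W)$ is its saturated ideal, ruling out $\bbV(W)=X$); it simply asserts that $W':=I_2(\mathcal{V}_X)$ has codimension one in $I_2(X)$, picks $q\in W\setminus W'$, notes $\mathcal{V}_X\cap\bbV(W)=\mathcal{V}_X\cap\bbV(q)$, and invokes Lemma~\ref{lemma:qua vero} directly --- so your extra bookkeeping is added rigor rather than a different strategy.
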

\begin{proof}
By \cite{Syz1} Theorem 2.2, the Veronese variety has linear syzygies. 
Conversely, let $W = \langle \alpha_1,\ldots,\alpha_m\rangle\subseteq I_2(X) $ be a subspace of codimension $1$  with linear syzygies.  
By Proposition \ref{prop:uniq vero} there exists a unique Veronese variety $\mathcal{V}_X$ containing $X$. Let $W'\subset I_2(X)$ be the subspace generated by the quadrics containing $\mathcal{V}_X$, which has codimension $1$ in $I_2(X)$. Assume that $W\neq W'$. Then, there exists $q\in W$ such that $q+W'=I_2(X)$. Therefore, $\mathcal{V}_X\cap \mathbb{V}(W)=\mathcal{V}_X\cap \mathbb{V}(q) $. By Lemma \ref{lemma:qua vero}, we conclude that  $\mathcal{V}_X=\mathbb{V}(W)$. 
\end{proof}

\para

From the above result, we deduce that in order to find $\mathcal{V}_X$ it is enough to find $W\subset I_2(X)$ as in Lemma \ref{lemma:recover vero}. Let $J$ be the ideal of $\mathcal{V}_X$. Then, the minimal free resolution of $J$ looks like 
\[
0\xleftarrow{\hspace*{0.5cm}} J\xleftarrow{\hspace*{0.5cm}}R(-2)^{a}\xleftarrow{\hspace*{0.5cm}}R(-3)^{b}\xleftarrow{\hspace*{0.5cm}}\cdots,
\]
where $R$ is the coordinate ring of $\P(\SS)$.

\para 

\begin{lemma}
 Let $X\in H_{2,n}(U)$ and let $I$ be its ideal. Then 
$\beta_{0,j} = 0$ for $j\neq 2$, $\beta_{0,2} = a+1$, and
 $\beta_{1,3}= b$, where $\beta_{i,j}$ are the graded Betti numbers of $I$.
\end{lemma}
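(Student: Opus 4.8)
The plan is to realize $X$ as a quadric hypersurface section of the Veronese variety $\mathcal{V}_X$ and to push the minimal free resolution of $\mathcal{V}_X$ through a short exact sequence. Since $F\in U$, the Hessian map $h_F$ is a quadratic Veronese embedding $\P^n\hookrightarrow\P(S^2V)$ with image $\mathcal{V}_X$, and $\bbV(F)$ has degree $4$; as $h_F$ is given by quadrics, $X=h_F(\bbV(F))=\mathcal{V}_X\cap\bbV(q)$ for some quadratic form $q$. The first step is to check that, at the level of ideals, $I=J+(q)$. The form $q$ is a nonzerodivisor on $R/J$ because $\mathcal{V}_X$ is irreducible and $q\notin J$; Veronese varieties are arithmetically Cohen--Macaulay, so $R/(J+(q))$ is Cohen--Macaulay of dimension $n\geq 2$, hence $J+(q)$ is saturated; and for generic $\bbV(F)$ (which is smooth) the scheme $\mathcal{V}_X\cap\bbV(q)$ is generically reduced, hence reduced. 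Thus $J+(q)$ is the radical, saturated ideal of $X$, i.e. $I=J+(q)$, and we obtain a degree-preserving short exact sequence
\[
0\longrightarrow (R/J)(-2)\overset{\,\cdot q\,}{\longrightarrow} R/J\longrightarrow R/I\longrightarrow 0 .
\]

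Next I would apply $\mathrm{Tor}^R_{\bullet}(-,R/\mathfrak{m})$, where $\mathfrak{m}$ is the irrelevant maximal ideal of $R$. The connecting maps $\mathrm{Tor}^R_i\big((R/J)(-2),R/\mathfrak{m}\big)\to\mathrm{Tor}^R_i\big(R/J,R/\mathfrak{m}\big)$ are induced by multiplication by $q\in\mathfrak{m}^2$, hence vanish, since each of these Tor modules is an $R/\mathfrak{m}$--vector space and is therefore annihilated by $\mathfrak{m}$. So the long exact sequence splits into short exact sequences, which in terms of graded Betti numbers reads
\[
\beta_{i,j}(R/I)=\beta_{i,j}(R/J)+\beta_{i-1,j-2}(R/J)\qquad\text{for all }i,j .
\]

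Finally I would substitute the low-degree Betti numbers of $R/J$: one has $\beta_{0,0}(R/J)=1$ and $\beta_{0,j}(R/J)=0$ for $j\neq 0$; since $J$ is minimally generated by $a$ quadrics, $\beta_{1,j}(R/J)=0$ for $j\neq 2$ and $\beta_{1,2}(R/J)=a$; since $\mathcal{V}_X$ is nondegenerate, $\beta_{1,1}(R/J)=0$; and $\beta_{2,3}(R/J)=\beta_{1,3}(J)=b$. Using $\beta_{t,j}(I)=\beta_{t+1,j}(R/I)$, this gives $\beta_{0,j}(I)=\beta_{1,j}(R/J)+\beta_{0,j-2}(R/J)$, which is $0$ for $j\neq 2$ and equals $a+1$ for $j=2$, and $\beta_{1,3}(I)=\beta_{2,3}(R/J)+\beta_{1,1}(R/J)=b$.

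The only genuinely delicate point is the identification $I=J+(q)$ — that the naive ideal of the intersection is already saturated and radical, not merely correct set-theoretically — and this is exactly where arithmetic Cohen--Macaulayness of the Veronese and smoothness of a generic quartic enter; everything after that is a formal manipulation of the Tor long exact sequence. Equivalently, one can package the same computation as the mapping cone of a lift of $\cdot q$ to the minimal free resolution of $R/J$, and check that the cone is minimal in homological degrees $\le 2$ because the minimal generator degrees along a minimal resolution strictly increase.
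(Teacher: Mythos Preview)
Your argument is correct, and it takes a different route from the paper's proof. The paper argues the two claims directly: it first asserts (with less justification than you give) that $I$ is minimally generated by $a+1$ quadrics, and then for $\beta_{1,3}$ it observes that $\beta_{1,3}\geq b$ trivially and rules out $\beta_{1,3}>b$ by a geometric contradiction: an extra linear syzygy $\sum_{i=0}^{a}l_iq_i$ with $l_0\neq 0$ would force $X\cap D(l_0)=\mathcal{V}_X\cap D(l_0)$, hence $X=\mathcal{V}_X$ by irreducibility and nondegeneracy. No Cohen--Macaulayness and no Tor are used.

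Your approach is more structural: you pin down $I=J+(q)$ carefully (using that $R/J$ is a domain and arithmetically Cohen--Macaulay so that $J+(q)$ is already saturated), and then read off the Betti numbers from the mapping-cone / Tor formula $\beta_{i,j}(R/I)=\beta_{i,j}(R/J)+\beta_{i-1,\,j-2}(R/J)$. This buys you more than the paper proves---you get \emph{all} the graded Betti numbers of $R/I$ in terms of those of $R/J$---at the cost of importing the ACM property of the Veronese. Two small remarks: your appeal to smoothness of $\bbV(F)$ is stronger than needed and introduces a genericity hypothesis not in the statement; it suffices that $F$ is squarefree, since then $h_F^{*}(q)$ and $F$ are quartics with the same zero scheme and hence agree, giving $X=\mathcal{V}_X\cap\bbV(q)$ scheme-theoretically without invoking reducedness. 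Also, ``$q$ is a nonzerodivisor on $R/J$'' really uses that $R/J$ is a domain (equivalently that $J$ is prime), not merely that $\mathcal{V}_X$ is irreducible; this is of course true for the Veronese.
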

\begin{proof}
Since $X$ has codimension $1$ in $\mathcal{V}_X$, and it is the image through the Hessian map of a degree $4$ hypersurface, we deduce that  $I$ is generated by $a+1$ quadrics and hence, $\beta_{0,j} = 0 $ for $j\neq 2$ and $\beta_{0,2}=a+1$. Let $q_0,\ldots,q_a$ be generators of $I$ such that $q_1,\ldots,q_a$ generate the ideal of $\mathcal{V}_X$.
Now, since $X$ is contained in $\mathcal{V}_X$, we have that  $\beta_{1,3}\geq b$. Assume that $\beta_{1,3}>b$. This implies that there exists a linear sygygy of the form 
$
\sum_{i=0}^a l_iq_i,
$
where $l_i$ are linear forms and $l_0$ is non-zero. In particular, we get that $X\cap D(l_0)=\mathbb{V}(q_1,\ldots,q_a)\cap D(l_0)=\mathcal{V}_X\cap D(l_0)$. Since $\mathcal{V}_X$ is nondegenerate, this intersection is non empty and we get $X=\mathcal{V}_X$. Since $X$ has lower dimension that $\mathcal{V}_X$ we conclude that $\beta_{1,3}=b$.
\end{proof}

Let $I $ be the ideal of $X$. From the previous lemma we deduce that $I$ has a  minimal free resolution of the form
\begin{equation}\label{eq:min res}
0\xleftarrow{\hspace*{0.5cm}} I\overset{\pi_1}{ \xleftarrow{\hspace*{0.5cm}}} R(-2)^{a+1}\overset{M}{\xleftarrow{\hspace*{0.5cm}}}R(-3)^{b}\oplus F\xleftarrow{\hspace*{0.5cm}}\cdots,
\end{equation}
where $F$ is a free module, and $M$ is a block matrix 
of the form
\begin{equation}\label{eq:block matrix res}
M = \left(
\begin{array}{c|c}
\begin{array}{c}
M_1\\  \cline{1-1} 0
\end{array}
&M_2
\end{array}
\right).
\end{equation}
Here $M_1$ is an $a\times b$ matrix whose entries are linear forms. By Lemma \ref{lemma:recover vero}, the vector subspace $W$ of $I_2(X)$ generated by the first $a$ coordinates of $\pi_1$ generates the ideal $J$.
In order to find generators of $J$, it is enough to find a free resolution of the form \eqref{eq:min res}.
We present a method for obtaining such a resolution using linear algebra. Assume we are given a minimal free resolution of $I$ 
\[
0\xleftarrow{\hspace*{0.5cm}} I\overset{\pi_2}{\xleftarrow{\hspace*{0.5cm}}} R(-2)^{a+1}\overset{N}{\xleftarrow{\hspace*{0.5cm}}}R(-3)^{b}\oplus F\xleftarrow{\hspace*{0.5cm}}\cdots.
\]
We obtain the following isomorphisms of sequences

\[
\begin{tikzcd}
0 & I \arrow[l] \ar[d,-,double equal sign distance,double]  & R(-2)^{a+1} \arrow[l, "\pi_2"'] \arrow[d, "g"']               & R(-3)^b\oplus F \arrow[l, "N"'] \arrow[d, "h_1\oplus h_2"]                     
\\
0 & I \arrow[l] \ar[d,-,double equal sign distance,double] & R(-2)^{a+1} \arrow[l, "\pi_1"] \ar[d,-,double equal sign distance,double]  & R(-3)^b\oplus F \arrow[l, "M"] \arrow[d, "h_1^{-1}\oplus h_2^{-1}"]  
\\
0 & I \arrow[l]                                & R(-2)^{a+1} \arrow[l, "\pi_1"]                                & R(-3)^b\oplus F \arrow[l, "M\circ(h_1\oplus h_2)"]        
\end{tikzcd}
\]
where the first isomorphisms $g$ and $h_1\oplus h_2$ exist by the uniqueness of minimal free resolutions and $M$ is as in \eqref{eq:block matrix res}. In particular, $M\circ (h_1\oplus h_2)$ is a matrix of the same form as \eqref{eq:block matrix res}. Hence, there exists a $g\in\mathrm{GL}(a+1)$ such that 
\[
\begin{tikzcd}
0 & I \arrow[l] \ar[d,-,double equal sign distance,double]  & R(-2)^{a+1} \arrow[l, "\pi_2"'] \arrow[d, "g"']               & R(-3)^b\oplus F \arrow[l, "N"'] \ar[d,-,double equal sign distance,double]                  
\\
0 & I \arrow[l]  & R(-2)^{a+1} \arrow[l, "\pi"]  & R(-3)^b\oplus F \arrow[l, "M'"] 
\end{tikzcd}
\]
is an isomorphism to a complex of the form \eqref{eq:min res}. In particular, the first $a$ entries of the map $\pi_2\circ g^{-1}$ generate the ideal of $\mathcal{V}_X$. The computation of $g$ is achieved by solving the linear system in the entries of $g$ given by requiring the first $b$ entries of the last row of $g\circ N$ to vanish. 

\para

Let us illustrate the previous procedure by an example.

\para 

\begin{example}\label{ex:deg 4 1}
Consider the hypersurface in $\P^2$ given by the polynomial 
\[
F = x_{0}^{3}x_1+x_{1}^{3}x_{2}+x_{2}^{3}x_0 +3x_{0}^{2}x_1x_2.
\]
The ideal of $X := H_{4,2}(F)\subset \P^5$ is
\begin{equation}\label{eq:ideal X}\hspace*{-4mm}\displaystyle \begin{array}{l}
I  =  
\langle
z_0^2-z_0z_3+z_2z_3+4z_0z_4-4z_2z_4-4z_3z_4+4z_1z_5+\frac{1}{2}z_2z_5-4z_4z_5-3z_5^2,   \\ 
\noalign{\vspace*{1mm}}   
    z_0z_1+\!z_2z_4-\!\frac{3}{2}z_0z_5+\!\frac{1}{2}z_2z_5+z_3z_5  , z_0z_2+4z_0z_4-\!4z_2z_4-\!4z_3z_4+2z_1z_5-\!2z_4z_5-\!z_5^2,  \\  \noalign{\vspace*{1mm}}  
    z_1^2+\frac{1}{4}z_0z_3-\frac{1}{4}z_2z_3-z_1z_4-2z_1z_5-\frac{1}{8}z_2z_5+z_4z_5+z_5^2 ,   
    z_1z_2-\frac{1}{2}z_0z_5-\frac{1}{2}z_2z_5 ,   \\\noalign{\vspace*{1mm}}  
    z_2^2+4z_0z_4-4z_2z_4-4z_3z_4+z_5^2 ,
    z_1z_3+z_2z_4-\frac{1}{4}z_5^2
    \rangle
\end{array}
\end{equation}

The first part of the free resolution of $I$ is 
\[
0\longleftarrow R\overset{A_0}{\longleftarrow} R(-2)^7\overset{A_1}{\longleftarrow} R(-3)^8\oplus R(-4)^6,
\]
where $R=\K[z_0,z_1,z_2,z_3,z_4,z_5]$.
The submatrix $N$  of $A_1$ with linear entries is
\[\tiny
 \begin{pmatrix}
-z_2-4z_4  &      \frac{1}{2}z_5  &      0  &      -4z_4  &      0  &      z_1-\frac{3}{2}z_5  &      -\frac{1}{4}z_3  &      0 \\ \noalign{\vspace*{1mm}} 
-2z_5  &      4z_4  &      \frac{1}{2}z_5  &      0  &      -4z_4  &      -z_0-4z_4  &      -z_1+z_4+\frac{1}{2}z_5  &      -z_2-4z_4  \\\noalign{\vspace*{1mm}} 
0  &      2z_5  &      -z_2  &      0  &      0  &      -4z_5  &      z_0-z_3  &      -2z_5  \\ \noalign{\vspace*{1mm}} 
z_0-z_3+8z_4  &      -z_1+\frac{1}{2}z_5  &      \frac{1}{4}z_3  &      -z_2+8z_4  &      -\frac{1}{2}z_5  &      \frac{1}{2}z_5  &      \frac{1}{4}z_3+\frac{1}{8}z_5  &      z_1-\frac{3}{2}z_5  \\ \noalign{\vspace*{1mm}} 
4z_5  &      z_0-4z_4  &      z_1-z_4-\frac{3}{2}z_5  &      2z_5  &      -z_2+4z_4  &      -z_3+4z_4-\frac{1}{2}z_5  &      \frac{1}{2}z_5  &      -z_3+4z_4  \\ \noalign{\vspace*{1mm}} 
z_3-4z_4+\frac{1}{2}z_5  &      0  &      -\frac{1}{4}z_3-\frac{1}{8}z_5  &      z_0-4z_4  &      z_1-\frac{1}{2}z_5  &      0  &      0  &      \frac{1}{2}z_5   \\ \noalign{\vspace*{1mm}} 
2z_5  &      -4z_4  &      -\frac{1}{2}z_5  &      0  &      4z_4  &      z_0+4z_4  &      z_1-z_4-\frac{1}{2}z_5  &      z_2+4z_4  
\end{pmatrix}
\]
Choosing 
\[
g = \begin{pmatrix}
1&0&0&0&0&0&0\\
0&1&0&0&0&0&0\\
0&0&1&0&0&0&0\\
0&0&0&1&0&0&0\\
0&0&0&0&1&0&0\\
0&0&0&0&0&1&0\\
0&1&0&0&0&0&1\\
\end{pmatrix},
\]
we get that the last row of $g\cdot N$ is zero. Therefore,  the six generators of the Veronese variety correspond to the first six entries of the matrix $A_0\circ g^{-1}$. We conclude that the ideal of the unique Veronese variety containing $X$ is the zero locus of the ideal $J $ given by 
\begin{equation}\label{eq:ideal vero}\begin{array}{c} J =  \langle
z_0^2-z_0z_3+z_2z_3+4z_0z_4-4z_2z_4-4z_3z_4+4z_1z_5+\frac{1}{2}z_2z_5-4z_4z_5-3z_5^2,   \\ \noalign{\vspace*{1mm}} 
  z_0z_1-z_1z_3-\frac{3}{2}z_0z_5+\frac{1}{2}z_2z_5+z_3z_5+\frac{1}{4}z_5^2,   \\ \noalign{\vspace*{1mm}} 
  z_1^2+\frac{1}{4}z_0z_3-\frac{1}{4}z_2z_3-z_1z_4-2z_1z_5-\frac{1}{8}z_2z_5+z_4z_5+z_5^2 ,   \\ \noalign{\vspace*{1mm}} 
  z_0z_2+4z_0z_4-4z_2z_4-4z_3z_4+2z_1z_5-2z_4z_5-z_5^2 ,   \\ \noalign{\vspace*{1mm}} 
  z_1z_2-\frac{1}{2}z_0z_5-\frac{1}{2}z_2z_5, 		\\ \noalign{\vspace*{1mm}}   
  z_2^2+4z_0z_4-4z_2z_4-4z_3z_4+z_5^2\rangle
\end{array}
\end{equation}
\end{example}

\begin{center}
\textsf{Second step: computation  of an isomorphism $\varphi:\mathcal{V}_X\rightarrow \P^n.$}
\end{center}

\para

 The next step of our algorithm is to find an isomorphism $\varphi:\mathcal{V}_X\rightarrow \P^n$,
which is the inverse of a Veronese embedding $v_2:\P^n\rightarrow\mathcal{V}_X$. We set $L\simeq \varphi^{*}\mathcal{O}_{\P^n}(1)$.

\para 

\begin{lemma}\label{lemma:line bundle}
 For $n$ even, $L\simeq \omega_{\mathcal{V}_X}\otimes \O_{\P(S^2V)}(k+1)|_{\mathcal{V}_X}$ where $2k=n$.
\end{lemma}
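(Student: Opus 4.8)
The plan is to transport every line bundle appearing in the statement to $\P^n$ along the isomorphism $\varphi\colon\mathcal{V}_X\to\P^n$, so that the claim reduces to an elementary identity between twists of $\mathcal{O}_{\P^n}$. First I would record the two facts that drive the computation. Since $\varphi$ is by construction the inverse of the quadratic Veronese embedding $v_2\colon\P^n\to\mathcal{V}_X\subseteq\P(S^2V)$, we have $v_2^{*}\mathcal{O}_{\P(S^2V)}(1)\simeq\mathcal{O}_{\P^n}(2)$, hence $\mathcal{O}_{\P(S^2V)}(m)|_{\mathcal{V}_X}\simeq\varphi^{*}\mathcal{O}_{\P^n}(2m)$ for every integer $m$. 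On the other hand, $\mathcal{V}_X$ is smooth and $\varphi$ is an isomorphism, so its canonical sheaf is $\omega_{\mathcal{V}_X}\simeq\varphi^{*}\omega_{\P^n}\simeq\varphi^{*}\mathcal{O}_{\P^n}(-n-1)$; and by definition $L\simeq\varphi^{*}\mathcal{O}_{\P^n}(1)$.

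Combining these, I would compute
\[
\omega_{\mathcal{V}_X}\otimes\mathcal{O}_{\P(S^2V)}(k+1)|_{\mathcal{V}_X}
\;\simeq\;\varphi^{*}\mathcal{O}_{\P^n}(-n-1)\otimes\varphi^{*}\mathcal{O}_{\P^n}(2k+2)
\;\simeq\;\varphi^{*}\mathcal{O}_{\P^n}(2k+1-n).
\]
Now invoke the hypothesis $n=2k$: the exponent collapses to $2k+1-2k=1$, so the right-hand side is $\varphi^{*}\mathcal{O}_{\P^n}(1)\simeq L$, which is precisely the assertion. Since $\mathrm{Pic}(\mathcal{V}_X)\simeq\mathrm{Pic}(\P^n)\simeq\mathbb{Z}$ is generated by $L$, the resulting isomorphism class is unambiguous.

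There is essentially no analytic difficulty here; the only point I would emphasize is the role of the parity hypothesis, which is what makes the lemma \emph{useful} rather than merely true. The identity $\omega_{\mathcal{V}_X}\otimes\mathcal{O}_{\P(S^2V)}(m)|_{\mathcal{V}_X}\simeq\varphi^{*}\mathcal{O}_{\P^n}(2m-n-1)$ expresses $L$ entirely in terms of data intrinsically attached to the embedded variety $\mathcal{V}_X$ — its canonical sheaf, which the reconstruction algorithm can read off from a minimal free resolution, together with the restriction of $\mathcal{O}_{\P(S^2V)}(1)$ — exactly when the equation $2m-n-1=1$ admits the integer solution $m=k+1$, i.e. when $n=2k$ is even. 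For $n$ odd the same recipe yields only $L^{\otimes 2}$, and one would have to extract a square root (unique on $\P^n$, but not directly produced by the resolution); this is why the algorithm of the present subsection is formulated for even $n$ only.
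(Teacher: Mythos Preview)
Your proof is correct and follows essentially the same approach as the paper: both arguments transport everything to $\P^n$ via $\varphi$ and use the two facts $v_2^{*}\mathcal{O}_{\P(S^2V)}(1)\simeq\mathcal{O}_{\P^n}(2)$ and $\omega_{\mathcal{V}_X}\simeq\varphi^{*}\mathcal{O}_{\P^n}(-n-1)$, differing only cosmetically in that the paper starts from $L$ and reaches the right-hand side, while you start from the right-hand side and reach $L$. Your closing paragraph on the role of the parity hypothesis is a helpful addition that the paper defers to a later remark.
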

\begin{proof}
Since $\varphi$ is the inverse of a Veronese embedding, we get that  $L^{\otimes 2}=\O_{\P(S^2V)}(1)|_{\mathcal{V}_X}$. Since $\varphi$ is an isomorphism, we have that $\varphi^{*}\omega_{\P^n} = \omega_{\mathcal{V}_X}$ and hence, $L^{\otimes -n-1}\simeq \omega_{\mathcal{V}_X}$. Let $k\in\N$ such that $n=2k$. Then, 
\[
L = L^{\otimes n+2}\otimes L^{\otimes -n-1}= \O_{\P(S^2V)}(1)|_{\mathcal{V}_X}^{\otimes k+1}\otimes\omega_{\mathcal{V}_X}.
\]
\end{proof}

 In particular, $\varphi$ is given by a basis of the space of global sections of $\omega_{\mathcal{V}_X}\otimes \O_{\P(S^2V)}(k+1)|_{\mathcal{V}_X}$. 
Let $\iota$ be the inclusion of $\mathcal{V}_X$ in $\P^{N}=\P(S^2V)$.
Let $R=\K[z_0,\ldots,z_N]$ be the homogeneous coordinate ring of $\P^N$ and let  $J$ be the ideal of $\mathcal{V}_X$. 
In \cite{Syz2}, the Betti numbers
 of the Veronese embbeding given by $|\mathcal{O}_{\P^n}(2)|$ are computed. In particular, we deduce  that the minimal free resolution of $R/J$ is of the form 
 \begin{equation}\label{eq:free res1}
0\xleftarrow{\hspace*{5mm}}
R
\overset{A_1}{\xleftarrow{\hspace*{5mm}}}
P_1
\overset{A_2}{\xleftarrow{\hspace*{5mm}}}
\cdots 
\overset{A_r}{\xleftarrow{\hspace*{5mm}}}
 R(k-N)^{n+1}\xleftarrow{\hspace*{5mm}} 0.
\end{equation}
where $r$ is the codimension of $\mathcal{V}_X$ in $\P^N$.

\para

\begin{proposition}\label{prop:ext}
 For $n=2k$, the line bundle $L$ is the sheafification of the graded $R/J$--module 
 \begin{equation}\label{eq:mod sections}
 (R/J)^{n+1}/\mathrm{Im} \,A_r^{t}
 \end{equation}
\end{proposition}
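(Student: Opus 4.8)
The plan is to recognize the module in \eqref{eq:mod sections} as a grading twist of the canonical module of $R/J$, to compute that canonical module from the dual of the resolution \eqref{eq:free res1}, and then to sheafify and match the result with $L$ via Lemma \ref{lemma:line bundle}.

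First I would use that the Veronese variety $\mathcal{V}_X$ is arithmetically Cohen--Macaulay, so that $R/J$ is a graded Cohen--Macaulay ring of Krull dimension $n+1$, hence of projective dimension $r=\mathrm{codim}_{\P^N}\mathcal{V}_X$ over $R$; this is exactly why \eqref{eq:free res1} stops at homological degree $r$. By graded local duality, the canonical module is $\omega_{R/J}\cong\mathrm{Ext}^r_R\big(R/J,\,R(-N-1)\big)$, and its sheafification is the dualizing sheaf $\omega_{\mathcal{V}_X}$, which is a line bundle since $\mathcal{V}_X$ is smooth. Next I would compute this $\mathrm{Ext}$ from \eqref{eq:free res1}: applying $\mathrm{Hom}_R(-,R(-N-1))$ to $P_\bullet$ and using that $\mathrm{Hom}_R\big(R(k-N)^{n+1},R(-N-1)\big)\cong R(-k-1)^{n+1}$, the homology of the dual complex in top degree $r$ is the cokernel of the transpose $A_r^t$ of the last differential, so
\[
\omega_{R/J}\ \cong\ (R/J)(-k-1)^{n+1}\big/\,\mathrm{Im}\,A_r^t ,
\]
the $R/J$--module structure being automatic because $\mathrm{Im}\,A_r^t$ already contains $J\cdot R(-k-1)^{n+1}$. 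Twisting by $k+1$ turns the right-hand side into exactly the module \eqref{eq:mod sections}, i.e.\ $(R/J)^{n+1}/\mathrm{Im}\,A_r^t\cong\omega_{R/J}(k+1)$ as graded $R/J$--modules.

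Finally, sheafifying and using that sheafification commutes with twisting,
\[
\big((R/J)^{n+1}/\mathrm{Im}\,A_r^t\big)^{\sim}\ \cong\ \omega_{\mathcal{V}_X}\otimes\O_{\P(S^2V)}(k+1)\big|_{\mathcal{V}_X},
\]
which by Lemma \ref{lemma:line bundle} equals $L$; this is the step where the hypothesis $n=2k$ is used. The parts that need care are the bookkeeping of the grading twists — in particular, the shift $k-N$ in the last term of \eqref{eq:free res1}, taken from \cite{Syz2}, is precisely what makes the final twist come out to be $k+1$ — together with a clean justification that $\mathrm{Ext}^r_R(R/J,R(-N-1))$ sheafifies to $\omega_{\mathcal{V}_X}$; this latter point needs only that $\mathcal{V}_X$ is Cohen--Macaulay of pure codimension $r$, so no genericity of $X$ enters here. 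One should also fix the convention that $A_r^t$ in \eqref{eq:mod sections} denotes the last map of the dualized complex, i.e.\ the transpose of $A_r$ read modulo $J$; with that reading the isomorphisms above are literal.
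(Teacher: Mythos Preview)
Your proposal is correct and follows essentially the same route as the paper: both compute the dualizing object as the top Ext against the ambient canonical twist by dualizing the resolution \eqref{eq:free res1}, identify the cokernel of $A_r^t$ with the desired module, and then match the twist $k+1$ with Lemma \ref{lemma:line bundle}. The only cosmetic difference is that you phrase things via the graded canonical module $\omega_{R/J}=\mathrm{Ext}^r_R(R/J,R(-N-1))$ and then sheafify, whereas the paper works directly with the sheaf $\mathcal{E}\mathrm{xt}^r_{\P^N}(\iota_*\O_{\mathcal{V}_X},\omega_{\P^N})$; these are equivalent here since $\mathcal{V}_X$ is ACM.
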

\begin{proof}

By \cite[Corollary 7.12]{Hart}, we have that 
$
\omega_{\mathcal{V}_X}\simeq\iota^{*} \mathcal{E}\mathrm{xt}^r_{\P^N}(\iota_{*}\O_{\mathcal{V}_X},\omega_{\P^N}).
$
Using \cite[Proposition 7.7]{GW}, we get that 
\[
 \omega_{\mathcal{V}_X}\otimes \O_{\P(S^2V)}(k+1)|_{\mathcal{V}_X}\simeq \iota^{*} \mathcal{E}\mathrm{xt}^r_{\P^N}(\iota_{*}\O_{\mathcal{V}_X},\omega_{\P^N}(k+1))
\simeq \iota^{*} \mathcal{E}\mathrm{xt}^r(\iota_{*}\O_{\mathcal{V}_X},\O_{\P^N}(k-N)).
\]
Applying $\mathcal{H}\mathrm{om}(-,R[k-N])$ to \eqref{eq:free res1} we get the   complex
\[
0\longleftarrow R^{n+1}\overset{A_r^{\mathrm{t}}}{\longleftarrow }\cdots
\overset{A_2^{\mathrm{t}}}{\longleftarrow }R[k-N]\longleftarrow 0.
\]
We deduce that $\mathcal{E}\mathrm{xt}^r(\iota_{*}\O_{\mathcal{V}_X},\O_{\P^N}(k-N))$ is the sheafification of the $r$--th cohomology group of this   complex which is 
$
R^{n+1}/\mathrm{Im}A_r^{\mathrm{t}}.
$
The proposition follows by tensoring this module by $R/J$.
\end{proof}

Let $M$ denote the graded $R/J$--module in equation \eqref{eq:mod sections}.
From this proposition we deduce that the space of global sections of $L$ is the zero graded piece of  $M$, i.e. 
$
M_0 = \K^{n+1}.
$
Let $e_0,\ldots,e_n$ be a basis of $M_0$ and let $\mathcal{U} =  D(z_0)\cap \mathcal{V}_X$. We get an inclusion 
\[
M_0=H^0(\mathcal{V}_X, \omega_{\mathcal{V}_X}(k+1))\hookrightarrow H^0(U,\omega_{\mathcal{V}_X}(k+1))=(M_{z_0})_0,
\]
where $M_{z_0}$ denotes the localization of $M$ by $z_0$. On the other hand, since $L$ is trivial on $\mathcal{U} $, we get an isomorphism 
$
\phi:(M_{z_0})_0\rightarrow(( R/J)_{z_0})_0.
$
This allows us to define the map 
\[\begin{array}{cccc}
\varphi:& U&\longrightarrow& \P^n\\
 & z&\longmapsto&[\phi(e_0)(z):\cdots:\phi(e_n)(z)].
\end{array}
\]
We illustrate the previous reasoning by an example.

\para

\begin{example}\label{ex:deg 4 2}
Consider the Veronese surface $\mathcal{V}$ in $\P^5$ given by the ideal $J$ of equation 
 \eqref{eq:ideal vero}.
The free resolution of $R/J$ is
\[
0\rightarrow R\rightarrow R(-2)^6\rightarrow R(-3)^8\overset{A}{\rightarrow} R(-4)^3\rightarrow 0,
\]
where 
\[
A = \begin{pmatrix}
  z_2+4z_4     & -\frac{1}{2}z_5     & 4z_4      \\
  2z_5     &    z_2     &     0       \\
  -z_1+\frac{3}{2}z_5   & -\frac{1}{4}z_3     & \frac{1}{2}z_5   \\
  z_0+4z_4     & -z_1+z_4+\frac{1}{2}z_5 & 4z_4\\
  z_3-4z_4     & -z_1+z_4+\frac{3}{2}z_5& z_2-4z_4    \\
  4z_5     &    z_0-z_3     &  2z_5     \\
  -\frac{1}{2}z_5     & \frac{1}{4}z_3+\frac{1}{8}z_5 & -z_1+\frac{1}{2}z_5 \\
  z_3-4z_4+\frac{1}{2}z_5& \frac{1}{2}z_5     &  z_0-4z_4    
\end{pmatrix}
\]
By Proposition \ref{prop:ext},  $L\simeq \omega_{\mathcal{V}}\otimes\mathcal{O}_{\P^5}(2)|_{\mathcal{V}}$ is the sheafification of the graded module
\[
M:=(R/J)^{3}/\langle \text{rows of } A \rangle.
\]
The space of global sections of $L$ is isomorphic to the zero graded piece of $M$, which is $\K^3$. Let $e_0,e_1,e_2$ be the usual basis of this space. Now consider the principal affine open subset $ D(z_1)$. Then, 
\[
H^0( D(z_1),L) = ((M)_{z_1})_0 = ((R/J)_{z_1})_0^{\oplus 3}/\langle \text{rows of }A_{z_1}\rangle\overset{\phi}{\simeq} ((R/J)_{z_1})_0,
\]
where $A_{z_1}=\frac{1}{z_1}A$.
One can check that in $\langle \text{rows of }A_{z_1}\rangle$ we have the elements
\[
e_0+\frac{1}{2}\left(\frac{z_0}{z_1}+\frac{z_2}{z_1}\right)e_1 \text{ and } -e_2+ \frac{1}{8}\left(-2\frac{z_0}{z_1}+6\frac{z_2}{z_1}+4\frac{z_3}{z_1}+\frac{z_5}{z_1}\right)e_1.
\]
Therefore, we get that 
\[
\phi(e_0) = -\frac{1}{2}\left(\frac{z_0}{z_1}+\frac{z_2}{z_1}\right)\phi(e_1) \text{ and } \phi(e_2) =  \frac{1}{8}\left(-2\frac{z_0}{z_1}+6\frac{z_2}{z_1}+4\frac{z_3}{z_1}+\frac{z_5}{z_1}\right)\phi(e_1).
\]
Thus, we get an isomorphism  
\[
\begin{array}{ccc}\displaystyle
 D(z_1)\cap \mathcal{V}& \overset{\varphi}{\longrightarrow }&\P^2 
\\
   \displaystyle
   \left[\frac{z_0}{z_1}:\cdots:\frac{z_5}{z_1}\right]& \longmapsto & \displaystyle\left[-4\left(\frac{z_0}{z_1}+\frac{z_2}{z_1}\right):8:-2\frac{z_0}{z_1}+6\frac{z_2}{z_1}+4\frac{z_3}{z_1}+\frac{z_5}{z_1}\right].
\end{array}
\]
Note that once we have the map $\varphi$, we can compute a parametrization of $\mathcal{V}$. This parametrization is the inverse of $\varphi$ and is given by a basis $\{F_0,\ldots,F_5\}$ of $S^2\K^3$. In particular, we get that for generic $[x,y,z]\in\P^2$ we must have that 
\[
 [-4(F_0+F_2):8F_1:-2F_0+6F_2+4F_3+F_5]=[x:y:z].
\]
This defines some linear equations in the coefficients of $F_0,\ldots,F_5$. A generic solution of such linear system of equations will give a parametrization of $V$. In our example, one can check that the polynomials 
\[
x_{0}^{2},x_{1}^{2},-x_{0}^{2}-2x_0x_{1},x_{2}^{2},x_0x_2,8x_{0}^{2}+12x_0x_{1}+8x_{1}x_{2}-4x_{2}^{2}
\]
provide a parametrization of the Veronese variety.
\end{example}

\para 

\begin{remark}\label{rem:para}
Assume that we are given the ideal  $J$ of a Veronese variety of even dimension $n$ that it is the image of quadratic Veronese embedding of line bundle. The previous study allows us to compute an isomorphism between the Veronese variety and $\P^n$. As shown in Example \ref{ex:deg 4 2}, using linear algebra we get a method for computing a parametrization of the Veronese variety from its ideal.
\end{remark}

\para 

\begin{remark}
    The computation of the isomorphism $\varphi:\mathcal{V}_X\rightarrow \P^n$ is the only step of the algorithm where we  use that $n$ is even. 
    For $n$ odd, the canonical bundle of $\P^n$ has even degree. As a consequence, we can not write $L$ by means of $\omega_{\mathcal{V}_X}$.  One needs to find a $\mathcal{O}_{\P^N}$--module whose restriction to $\mathcal{V}_X$ is a line bundle of odd degree.
\end{remark}
\para

\begin{center}
\textsf{Third step: computation of $\varphi(X)$}
\end{center}

\para

Once we have computed an isomorphism $\varphi:\mathcal{V}_X\rightarrow \P^n$, we compute the image of $X$ through $\varphi$. This image is a  hypersurface defined by a degree $4$ polynomial $G$. Let $F\in\P(S^4V)$ be such that $H_{4,n}(F)=X$. In other words, $F$ is the polynomial we want to compute. Then, the composition $\varphi\circ h_F$ is an automorphisms of $\P^n$ that sends  $\bbV(F)$ to $\bbV(G)$. In particular, we get that there exists $g\in\mathrm{PGL}(n+1)$ such that $G = g\cdot F$. Moreover, by Lemma \ref{lemma:n=1 2} we get the following commutative diagram
\[\begin{tikzcd}
\P^n \arrow[rr, "h_F"] \arrow[dd, "(g^{\mathrm{t}})^{-1}"'] &  & \mathcal{V}_X \arrow[dd, "\rho(g^{\mathrm{t}})^{\mathrm{t}}"] \arrow[lldd, "\varphi"'] \\
                                                            &  &                                                                           \\
\P^n \arrow[rr, "h_G"']                                     &  & h_G(\P^n)                                                                
\end{tikzcd}.
\]
Therefore, we get the that $\rho(g^{\mathrm{t}})^{\mathrm{t}}=h_G\circ \varphi$. Note that the representation $\rho$ is injective and the preimages through $\rho$ can be computed by a solving  linear system of equations. In particular, we can recover $g$ from the composition $h_G\circ \varphi$. Finally, the polynomial $F$ is computed by applying $g^{-1}$ to $G$. 

\para

\begin{center}
\textsf{The algorithm}
\end{center}

\para

Summarizing the previous reasoning, we outline the algorithm

\para 

\begin{algorithm}[H]
\caption{}\label{alg:2}

\vspace*{2mm}

\noindent \textsf{Input:}  the ideal $I$ defining a variety inside the image of  $H_{4,n}$ for $n=2k$

\vspace*{1mm}

\noindent \textsf{Output:}   the unique  $F\in \P(S^4V)$ such that $H_{4,n}(F) = \bbV(I)$.
\begin{enumerate}
\item  Compute the unique Veronese variety $\mathcal{V}$ containing $\bbV(I)$.
\item Determine an isomorphism $\varphi:\mathcal{V}\rightarrow \P^n$.
\item Compute the quartic form $G$ defining $\varphi(\bbV(I)).$
\item Determine $ g^{\mathrm{t}} =\rho^{-1}((h_G\circ \varphi)^{\mathrm{t}})$. Return, $F= g^{-1}\cdot G$.
\end{enumerate}
\end{algorithm}

\para 

\begin{example}
Consider the variety $X$ given by the ideal $I$ in equation \eqref{eq:ideal X}. By Example \ref{ex:deg 4 1}, the ideal $J$ of the unique Veronese variety $\mathcal{V}_X$ is given by \eqref{eq:ideal vero}. By Example \ref{ex:deg 4 2}, we have an isomorphism $\varphi:\mathcal{V}_X\rightarrow\P^2$ sending $[z_0:\cdots:z_5]$ to $$[-4(z_0+z_2):8z_1:-2z_0+6z_2+4z_3+z_5].$$
The image of $X$ through this map is given by the polynomial \[
G =
2x_{0}^{4}-8x_{0}^{3}x_1+6x_{0}^{2}x_{1}^{2}+x_0x_{1}^{3}+4x_{0}^{3}x_{2}-48x_{0}^{2}x_1x_2+12x_0x_{1}^{2}x_{2}-96x_0x_1x_{2}^{2}-64x_1x_{2}^{3}.\]
One can check that, modulo $J$  the composition $h_G\circ\varphi$ is given by the matrix 
\begin{equation}\label{eq:mat ex}
\begin{pmatrix}
0  &    0  &    1  &    0  &    -4  &    4 \\
0  &    -1  &    0  &    2  &    0  &    0 \\
1  &    0  &    0  &    0  &    0  &    0 \\
0  &    0  &    0  &    0  &    -4  &    8 \\
0  &    0  &    0  &    4 &    0  &    0 \\
0  &    0  &    0  &    0  &    0  &    16
\end{pmatrix}.
\end{equation}
In this case, the representation $\rho$ is given by 
\[
\begin{pmatrix}
a_0&a_1&a_2\\
b_0&b_1&b_2\\
c_0&c_1&c_2
\end{pmatrix}
\longmapsto
\begin{pmatrix}
a_0^2 &  a_0b_0 &  b_0^2 &  a_0c_0 &  b_0c_0 &  c_0^2 \\
2a_0a_1 &  a_1b_0+a_0b_1 &  2b_0b_1 &  a_1c_0+a_0c_1 &  b_1c_0+b_0c_1 &  2c_0c_1 \\
a_1^2 &  a_1b_1 &  b_1^2 &  a_1c_1 &  b_1c_1 &  c_1^2 \\
2a_0a_2 &  a_2b_0+a_0b_2 &  2b_0b_2 &  a_2c_0+a_0c_2 &  b_2c_0+b_0c_2 &  2c_0c_2 \\
2a_1a_2 &  a_2b_1+a_1b_2 &  2b_1b_2 &  a_2c_1+a_1c_2 &  b_2c_1+b_1c_2 &  2c_1c_2 \\
a_2^2 &  a_2b_2 &  b_2^2 &  a_2c_2 &  b_2c_2 &  c_2^2
\end{pmatrix}.
\]
One can check that the unique $g\in\mathrm{PGL}(3)$ such that $\rho(g^{\mathrm{t}})^\mathrm{t}$ equals the matrix \eqref{eq:mat ex} is
\[
g = \begin{pmatrix}
0&1&-2\\-1&0&0\\0&0&-4
\end{pmatrix}.
\]
We conclude that $H_{4,2}^{-1}(X)$ is the polynomial 
\[
 g^{-1}\cdot G = -256x_{0}^{3}x_1-768x_{0}^{2}x_{1}x_{2}-256x_{1}^{3}x_2-256x_0x_{2}^{3}.
\]
Note that this polynomial, 
up to multiplication by constants, coincides with the polynomial we started with in Example \ref{ex:deg 4 1}.
\end{example}

\para 

\section{Conclusions}

We have introduced the Hessian correspondence $H_{d,n}$ and the Hessian correspondence $H_{d,n,k}$ for polynomials with Waring rank $k$. We have studied $H_{d,n,k}$ for $k\leq n+1$ and $H_{d,n}$ for hypersurfaces of degree $3$ and $4$.
In the case of hypersurfaces of Waring rank $k\leq n+1$ we have shown that, for $d$ even, $H_{d,n,k}$ is birational onto its image, whereas for $d$ odd, it is genericaly finite of degree $2^{k-1}$.
In the case $d=3$, we have seen that  $H_{3,1}$ is a two to one map and that $H_{3,n}$ is birational onto its image for $n\geq 2$. We have introduced the variety of $k$--gradients $\phi_k$ and we have computed its irreducible components as well as their dimensions.  For the case $d=4$, we have seen that $H_{4,n}$ is always birational onto its image. 
From a computational point of view, we have provided effective algorithms for recovering $F$ from its Hessian variety when either $d=3$ and $n\geq 1$, or $d=4$ and $n=1$, or  $d=4$ and $n$ even. As future work,  the natural question of deriving a recovery algorithm for $d=4$ and $n$ odd appears. Other possible open problem is the study of the map $H_{d,n}$ for $d\geq 5$ or the map $H_{d,n,k}$ for $k>n+1$. 

\vspace*{3mm}
\subsection*{Acknowledgements}
I want to express my gratitude to Daniele Agostini for all his help and guidance. I am also grateful to Fulvio Gesmundo, Ángel David Rios Ortiz, Pierpaola Santarsiero and Josef Schicho for helpfull and inspiring discussions on the topic. I received the support of a fellowship from the "la Caixa" Foundation (ID 100010434). The fellowship code is LCF/BQ/EU21/11890110.

\addcontentsline{toc}{section}{References}\label{sec:references}
\bibliographystyle{plain}

\para 

\subsection*{Affiliation}

\para 

\noindent \textsc{Javier Sendra--Arranz,\\ Eberhard Karls Universität Tübingen (Tübingen) and Max Planck Institute for Mathematics in the Science (Leipzig)}\\
\hfill {\tt sendra@math.uni-tuebingen.de}
\para 

\end{document}